\documentclass{amsart}

\usepackage[letterpaper,margin=1.25in]{geometry}

\newcommand{\dftxt}[1]{\textbf{\boldmath{#1}}}

\usepackage{amssymb}

\usepackage[all]{xy}
\SilentMatrices
\SelectTips{cm}{}
\newdir{ >}{{}*!/-5pt/@{>}}
\newdir{ (}{{}*!/-5pt/@^{(}}

\usepackage{url}

\newtheorem{de}{Definition}[section]

\newtheorem{lem}[de]{Lemma}
\newtheorem{prop}[de]{Proposition}
\newtheorem{cor}[de]{Corollary}
\newtheorem{thm}[de]{Theorem}

\theoremstyle{remark}
\newtheorem{rem}[de]{Remark}
\newtheorem{ex}[de]{Example}

\newcommand{\N}{\ensuremath{\mathbb{N}}}
\newcommand{\Z}{\ensuremath{\mathbb{Z}}}

\newcommand{\R}{\ensuremath{\mathbb{R}}}
\newcommand{\C}{\ensuremath{\mathbb{C}}}

\DeclareMathOperator*{\colim}{colim}
\DeclareMathOperator{\supp}{supp}
\DeclareMathOperator{\im}{Im}
\DeclareMathOperator{\Ext}{Ext}

\newcommand{\Diff}{{\mathfrak{D}\mathrm{iff}}}
\newcommand{\Fr}{{\mathfrak{F}\mathrm{r}}}
\newcommand{\Vect}{{\mathfrak{V}\mathrm{ect}}}
\newcommand{\Top}{{\mathfrak{T}\mathrm{op}}}
\newcommand{\DVect}{{\mathfrak{D}\mathrm{Vect}}}
\newcommand{\TVect}{{\mathfrak{T}\mathrm{Vect}}}

\newcommand{\Set}{{\mathfrak{S}\mathrm{et}}}
\newcommand{\DCh}{{\mathfrak{D}\mathrm{Ch}}}
\newcommand{\hDCh}{{\mathfrak{h}\mathrm{DCh}}}

\newcommand{\blank}{-}
\newcommand{\ra}{\to}

\usepackage{ifpdf}
\ifpdf  \usepackage[pdftex,bookmarks=false]{hyperref}
\else   \usepackage[hypertex]{hyperref}
\fi

\title{Homological Algebra for Diffeological Vector Spaces}

\author{Enxin Wu}
\email{enxin.wu@univie.ac.at}
\address{Faculty of Mathematics, University of Vienna, Vienna, Austria}
\date{\textbf{Draft: \today}}

\date{\today}

\begin{document}

\subjclass[2010]{18G25 (primary), 57P99, 26E10 (secondary).}

\begin{abstract}
Diffeological spaces are natural generalizations of smooth manifolds, introduced by J.M.~Souriau and his mathematical group in the 1980's. Diffeological vector spaces (especially fine diffeological vector spaces) were first used by P. Iglesias-Zemmour to model some infinite dimensional spaces in~\cite{I1,I2}. K.~Costello and O.~Gwilliam developed homological algebra for differentiable diffeological vector spaces in Appendix A of their book~\cite{CG}. In this paper, we present homological algebra of general diffeological vector spaces via the projective objects with respect to all linear subductions, together with some applications in analysis.
\end{abstract}

\maketitle

\tableofcontents

\section{Introduction}

The concept of a diffeological space and the terminology was formulated by J.M. Souriau and his mathematical group in the 1980's (\cite{So1,So2}) as follows:

\begin{de}
A \dftxt{diffeological space} is a set $X$
together with a specified set $\mathcal{D}_X$ of functions $U \ra X$ (called \dftxt{plots})
for every open set $U$ in $\R^n$ and for each $n \in \N$,
such that for all open subsets $U \subseteq \R^n$ and $V \subseteq \R^m$:
\begin{enumerate}
\item (Covering) Every constant map $U \ra X$ is a plot;
\item (Smooth compatibility) If $U \ra X$ is a plot and $V \ra U$ is smooth,
then the composition $V \ra U \ra X$ is also a plot;
\item (Sheaf condition) If $U=\cup_iU_i$ is an open cover
and $U \ra X$ is a set map such that each restriction $U_i \ra X$ is a plot,
then $U \ra X$ is a plot.
\end{enumerate}
We usually use the underlying set $X$ to represent the diffeological space $(X,\mathcal{D}_X)$.
\end{de}

Comparing with the concept of a smooth manifold, a diffeological space starts with a set instead of a topological space, and it uses all open subsets of Euclidean spaces for characterizing smoothness, subject to the above three axioms. It is a sheaf over a certain site and has an underlying set. This makes it a concrete sheaf; see~\cite{BH}.

The theory of diffeological spaces was further developed by several mathematicians, especially Souriau's students P. Donato and P. Iglesias-Zemmour. Recently, P. Iglesias-Zemmour published the book~\cite{I2}. We refer the reader unfamiliar with diffeological spaces to~\cite{I2} for terminology and details. Let us mention a few basic properties:

A \dftxt{smooth map} between diffeological spaces is a function sending each plot of the domain to a plot of the codomain. Diffeological spaces with smooth maps form a category, denoted by $\Diff$. The category $\Diff$ contains the category of smooth manifolds and smooth maps as a full subcategory, that is, every smooth manifold is automatically a diffeological space, and smooth maps between smooth manifolds in this new sense are the same as smooth maps between smooth manifolds in the usual sense. Moreover, the category $\Diff$ is complete, cocomplete and (locally) cartesian closed. Every (co)limit in $\Diff$ has the corresponding (co)limit in $\Set$ as the underlying set. For any diffeological spaces $X$ and $Y$, we write $C^\infty(X,Y)$ for the set of all smooth maps $X \ra Y$. There is a natural diffeology (called the \dftxt{functional diffeology}) on $C^\infty(X,Y)$ making it a diffeological space. Also, every diffeological space has a natural topology called the \dftxt{$D$-topology}, which is the same as the usual topology for every smooth manifold. See~\cite[Chapters~1 and~2]{I2} for more details.

\medskip

Vector spaces are fundamental objects in mathematics. There are corresponding objects in diffeology called diffeological vector spaces, that is, they are both diffeological spaces and vector spaces such that addition and scalar multiplication maps are both smooth. P. Iglesias-Zemmour focused on fine diffeological vector spaces and used them to model some infinite dimensional spaces in~\cite{I1,I2}. K.~Costello and O. Gwilliam developed homological algebra for differentiable diffeological vector spaces (that is, diffeological vector spaces with a presheaf of flat connections) in Appendix A of their book~\cite{CG}. In this paper, we study homological algebra of general diffeological vector spaces. Here are the main results: (1) The category of diffeological vector spaces and smooth linear maps is complete and cocomplete (Theorems~\ref{thm:dvscomplete} and~\ref{thm:dvscocomplete}). (2) For every diffeological space, there is a free diffeological vector space generated by it, together with a universal property (Proposition~\ref{prop:free}). Every diffeological vector space is a quotient vector space of a free diffeological vector space (Corollary~\ref{cor:quotient}), but \dftxt{not} every diffeological vector space is a free diffeological vector space generated by a diffeological space (Example~\ref{ex:notfree}). (3) We define tensor products and duals for diffeological vector spaces in Section~\ref{s:catprop}. (4) We define short exact sequences of diffeological vector spaces (Definition~\ref{def:ses}), and we have necessary and sufficient conditions for when a short exact sequence of diffeological vector spaces splits (Theorem~\ref{thm:splitses}). Unlike the case of vector spaces, \dftxt{not} every short exact sequence of diffeological vector spaces splits (Example~\ref{ex:nonssplit}). We also get a generalized version of Borel's theorem (Remark~\ref{rem:gBorel}). (5) Fine diffeological vector spaces (Definition~\ref{def:finedvs}) behave like vector spaces, except for taking infinite products and duals (Examples~\ref{ex:notfine} and~\ref{ex:dual}). Every fine diffeological vector space is a free diffeological vector space generated by a discrete diffeological space (in the list of facts about fine diffeological vector spaces in Section~\ref{s:finedvs}). A free diffeological vector space generated by a diffeological space is fine if and only if this diffeological space is discrete (Theorem~\ref{thm:freeinverse}). (6) Every fine diffeological vector space is Fr\"olicher (Proposition~\ref{prop:Frolicher}). (7) We define projective diffeological vector spaces as projective objects in the category of diffeological vector spaces with respect to linear subductions (Definition~\ref{def:projective}), and we have many equivalent characterizations (Remark~\ref{rem:projective}, Proposition~\ref{prop:perserveses}, Corollaries~\ref{cor:projective} and~\ref{cor:ext1=>projective}). Every fine diffeological vector space and every free diffeological vector space generated by a smooth manifold is projective (Corollaries~\ref{Cor:fine=>proj} and~\ref{Cor:freeonmfd=>proj}). But \dftxt{not} every (free) diffeological vector space is projective (Example~\ref{ex:notprojective}). However, there are enough projectives in the category of diffeological vector spaces (Theorem~\ref{thm:enoughprojectives}). (8) We have many interesting (non)-examples of (fine or projective) diffeological vector spaces (Examples~\ref{ex:notfine}, \ref{ex:dual}, \ref{ex:notfine2}, \ref{ex:projective1}, \ref{ex:projective2}, \ref{ex:notprojective}, Proposition~\ref{Prop:projective}, Corollaries~\ref{Cor:fine=>proj} and~\ref{Cor:freeonmfd=>proj}, and Remark~\ref{rem:cofibrant=>projective}). (9) We establish homological algebra for general diffeological vector spaces in Section~\ref{s:ha}. As usual, every diffeological vector space has a diffeological projective resolution, and any two diffeological projective resolutions are diffeologically chain homotopic (see the paragraph before Lemma~\ref{lem:Schanuel}). Schanuel's lemma, Horseshoe lemma and (Short) Five lemma still hold in this setting (Lemmas~\ref{lem:Schanuel}, \ref{lem:Horseshoe} and ~\ref{lem:sfl}, and Proposition~\ref{prop:fl}). For any diffeological vector spaces $V$ and $W$, we can define ext diffeological vector spaces $\Ext^n(V,W)$ (Definition~\ref{def:ext}). For any short exact sequence of diffeological vector spaces in the first or second variable, we get a sequence of ext diffeological vector spaces which is exact in the category $\Vect$ of vector spaces and linear maps (Theorems~\ref{thm:les} and~\ref{thm:les2}). And $\Ext^1(V,W)$ classifies all short exact sequences in $\DVect$ of the form $0 \ra W \ra A \ra V \ra 0$ up to equivalence (Theorem~\ref{thm:bijection}).

Finally, I'd like to thank J.D.~Christensen for helpful discussions of some material in this paper, and G.~Sinnamon for the analysis proof of Example~\ref{ex:notfine2}.

\bigskip

\textbf{Convention}: 
Throughout this paper, unless otherwise specified,
\begin{itemize}
\item every vector space is over the field $\R$ of real numbers;

\item every linear map is over $\R$;

\item every tensor product is over $\R$, i.e., $\otimes = \otimes_\R$;

\item every smooth manifold is assumed to be Hausdorff, second-countable, finite dimensional and without boundary;

\item every smooth manifold is equipped with the standard diffeology;

\item every subset of a diffeological space is equipped with the sub-diffeology;

\item every product of diffeological spaces is equipped with the product diffeology;

\item every quotient set of a diffeological space is equipped with the quotient diffeology;

\item every function space of diffeological spaces is equipped with the functional diffeology.
\end{itemize}

\section{Definition and basic examples}

In this section, we recall the definitions of diffeological vector spaces and smooth linear maps between them, together with some basic examples.

\begin{de}
A \dftxt{diffeological vector space} is a vector space $V$ together with a diffeology, such that the addition map $V \times V \ra V$ and the scalar multiplication map $\R \times V \ra V$ are both smooth.
\end{de}

\begin{de}
A \dftxt{smooth linear map} between two diffeological vector spaces is a function which is both smooth and linear.
\end{de}

Diffeological vector spaces with smooth linear maps form a category, denoted by $\DVect$. Given two diffeological vector spaces $V$ and $W$, we always write $L^\infty(V,W)$ for the set of all smooth linear maps $V \ra W$.

\begin{ex}
Every vector space with the indiscrete diffeology is a diffeological vector space. We write $\R_{ind}$ for the diffeological vector space with the underlying set $\R$ and the indiscrete diffeology. 
\end{ex}

\begin{ex}\label{ex:sub}
Every linear subspace of a diffeological vector space is a diffeological vector space.
\end{ex}

\begin{ex}\label{ex:product}
Every product of diffeological vector spaces is a diffeological vector space. In particular, $\prod_{\omega} \R$, the product of countably many copies of $\R$, is a diffeological vector space.
\end{ex}

\begin{ex}\label{ex:quotient}
Every quotient vector space of a diffeological vector space is a diffeological vector space.
\end{ex}

\begin{ex}\label{ex:functionspace}
Let $X$ be a diffeological space, and let $V$ be a diffeological vector space. Then $C^\infty(X,V)$ with pointwise addition and pointwise scalar multiplication is a diffeological vector space. Moreover, the map 
\[
m:C^\infty(X,\R) \times C^\infty(X,V) \ra C^\infty(X,V) \text{ with } m(f,g)(x)=f(x)g(x)
\]
is a well-defined smooth map.
\end{ex}

\begin{ex}
Every topological vector space with the continuous diffeology is a diffeological vector space. Moreover, if we write $\TVect$ for the category of topological vector spaces and continuous linear maps, then the adjoint pair (see~\cite[Proposition~3.3]{CSW})
\[
D:\Diff \rightleftharpoons \Top:C
\]
induces an adjoint pair 
\[
D:\DVect \rightleftharpoons \TVect:C.
\]
\end{ex}

\section{Categorical properties}\label{s:catprop}

In this section, we study the categorical properties of the category $\DVect$ of diffeological vector spaces and smooth linear maps. All categorical terminology is from~\cite{M}. We begin by showing that $\DVect$ is complete and cocomplete. Then we focus on the algebraic aspects of diffeological vector spaces. We construct the free diffeological vector space over an arbitrary diffeological space, and define tensor products, duals and short exact sequences of diffeological vector spaces. We also discuss some isomorphism theorems and necessary and sufficient conditions for when a short exact sequence of diffeological vector spaces splits. Unlike the case of vector spaces, we will see in next section that \dftxt{not} every short exact sequence of diffeological vector spaces splits.

\begin{thm}\label{thm:dvscomplete}
The category $\DVect$ is complete.
\end{thm}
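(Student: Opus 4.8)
The plan is to construct all limits in $\DVect$ from two building blocks: products and equalizers, since a category with those two constructions is complete. First I would check products. Given a family $\{V_i\}_{i \in I}$ of diffeological vector spaces, form the usual vector-space product $\prod_{i} V_i$ and equip it with the product diffeology (which exists by the completeness of $\Diff$). By Example~\ref{ex:product} this is again a diffeological vector space, and the coordinate projections $\pi_i : \prod_i V_i \to V_i$ are smooth and linear. I would then verify the universal property: given smooth linear maps $f_i : W \to V_i$, the unique linear map $f : W \to \prod_i V_i$ with $\pi_i \circ f = f_i$ coming from the algebraic product is smooth because smoothness into a product diffeology is detected coordinatewise; hence $\prod_i V_i$ is the product in $\DVect$.

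Next I would treat equalizers. Given two smooth linear maps $f, g : V \to W$, consider the set $E = \{v \in V \mid f(v) = g(v)\}$. This is a linear subspace of $V$ since $f$ and $g$ are linear, so by Example~\ref{ex:sub} it is a diffeological vector space with the sub-diffeology, and the inclusion $e : E \hookrightarrow V$ is smooth and linear. To see this is the equalizer in $\DVect$: any smooth linear $h : W' \to V$ with $f \circ h = g \circ h$ factors uniquely through the subspace $E$ as a linear map $W' \to E$, and this factorization is smooth because a map into a subspace with the sub-diffeology is smooth precisely when its composite with the inclusion is smooth, which holds by hypothesis on $h$. Thus $(E, e)$ is the equalizer.

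Finally I would invoke the standard categorical fact (from~\cite{M}) that a category possessing all small products and all equalizers is complete, and assemble an arbitrary limit as the equalizer of the two evident maps between $\prod_{i} F(i)$ and $\prod_{(u : i \to j)} F(j)$ built from a diagram $F$. Each of these two maps is smooth and linear by the product universal property just established, so its equalizer exists by the previous paragraph, and one checks it carries the limit universal property in $\DVect$.

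I do not expect a serious obstacle here: the content is entirely that the forgetful functors $\DVect \to \Diff$ and $\DVect \to \Vect$ jointly create limits, and the only things to verify are that the product diffeology and sub-diffeology interact correctly with the vector space operations (already recorded in Examples~\ref{ex:product} and~\ref{ex:sub}) and that smoothness of the universal maps is detected in the expected way. The mildest point of care is bookkeeping in the general-diagram step — making sure the two parallel maps between the two products are the right ones and that their equalizer genuinely satisfies the cone universal property — but this is the usual routine argument and presents no real difficulty.
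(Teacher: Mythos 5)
Your proposal is correct and is exactly the argument the paper intends: the paper's one-line proof cites Example~\ref{ex:sub} and Example~\ref{ex:product}, i.e., equalizers via linear subspaces with the sub-diffeology and products via the product diffeology, and then the standard products-plus-equalizers criterion for completeness. You have simply written out the details the paper leaves implicit.
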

\begin{proof}
This follows directly from Example~\ref{ex:sub} and Example~\ref{ex:product}.
\end{proof}

The underlying set of coproduct in $\Diff$ is disjoint union of the underlying sets, so coproduct in $\Diff$ cannot be coproduct in $\DVect$. On the other hand, the forgetful functor $\DVect \ra \Vect$ is a left adjoint, so if coproduct exists in $\DVect$, the underlying set must be direct sum of the underlying vector spaces. The proof of the following proposition tells us how to put a suitable diffeology on this set to make it a coproduct in $\DVect$. 

\begin{prop}\label{prop:coproduct}
The category $\DVect$ has arbitrary coproducts.
\end{prop}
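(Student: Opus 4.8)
The plan is to construct the coproduct $\bigoplus_{i \in I} V_i$ explicitly, taking as underlying vector space the algebraic direct sum $\bigoplus_{i \in I} V_i$ (which is forced, as noted before the statement, since the forgetful functor $\DVect \to \Vect$ is a left adjoint and hence preserves colimits), and then equipping this set with a suitable diffeology. The natural candidate is the \emph{final} (or \emph{pushforward}) diffeology on $\bigoplus_{i} V_i$ with respect to the family of canonical inclusions $\iota_i \colon V_i \to \bigoplus_i V_i$ together with the addition maps; concretely, I would declare a map $p \colon U \to \bigoplus_i V_i$ to be a plot if and only if each point of $U$ has an open neighborhood $U'$ on which $p$ can be written as a finite sum $\sum_{k=1}^n \iota_{i_k} \circ p_k$ with each $p_k \colon U' \to V_{i_k}$ a plot. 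First I would check that this prescription satisfies the three diffeology axioms (covering, smooth compatibility, sheaf condition) — the sheaf condition is essentially built into the ``locally a finite sum'' formulation, and smooth compatibility follows by precomposing each $p_k$ with the smooth map.

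Next I would verify that this diffeology makes $\bigoplus_i V_i$ a diffeological vector space, i.e.\ that addition $\bigoplus_i V_i \times \bigoplus_i V_i \to \bigoplus_i V_i$ and scalar multiplication $\R \times \bigoplus_i V_i \to \bigoplus_i V_i$ are smooth: given plots into the (co)domain, locally they are finite sums of plots into the $V_i$, and one combines/rescales these using that each $V_i$ is a diffeological vector space. Then I would check that each inclusion $\iota_i \colon V_i \to \bigoplus_i V_i$ is smooth — immediate, since $\iota_i \circ q$ is already in the stated form for any plot $q$ of $V_i$. Finally, for the universal property: given smooth linear maps $f_i \colon V_i \to W$ into a diffeological vector space $W$, the unique linear map $f \colon \bigoplus_i V_i \to W$ with $f \circ \iota_i = f_i$ must be shown to be smooth. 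For a plot $p$ of $\bigoplus_i V_i$, locally $p = \sum_k \iota_{i_k} \circ p_k$, so $f \circ p = \sum_k f_{i_k} \circ p_k$ locally, which is a plot of $W$ since each $f_{i_k} \circ p_k$ is a plot and $W$ is a diffeological vector space (so finite sums of plots are plots); by the sheaf condition $f \circ p$ is a plot, so $f$ is smooth.

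The main obstacle I anticipate is establishing that the proposed family of plots actually forms a diffeology — specifically, verifying the sheaf/gluing axiom together with the interaction between the ``locally finite sum'' description and smooth compatibility, since one must ensure that the local decompositions can be chosen coherently and that the number of summands and the index set may genuinely vary from point to point without breaking the axioms. A secondary subtlety is confirming that this final diffeology is the \emph{coarsest} one making all the structure work, or rather simply that it is the right one — but since I am constructing it as a colimit-type (pushforward) diffeology, the universal property argument in the previous paragraph is what pins it down, and the smoothness of $f$ is really the crux that makes the whole construction cohere. Everything else reduces to routine unwinding of definitions once the diffeology is shown to be well-defined.
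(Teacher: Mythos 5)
Your construction is correct and is essentially the paper's: the paper obtains the same object as the filtered colimit in $\Diff$ of the finite products $\prod_{j\in J}V_j$ over finite subsets $J\subseteq I$, and explicitly identifies the resulting diffeology as the final diffeology for the canonical maps $\prod_{j\in J}V_j\to\bigoplus_{i\in I}V_i$ — i.e.\ plots are precisely the maps that are locally finite sums $\sum_k \iota_{i_k}\circ p_k$ of plots of the $V_{i_k}$, exactly as you define them. Your hands-on verification of the diffeology axioms, the vector space structure, and the universal property simply unwinds the paper's appeal to filteredness; the content is the same (and the sheaf axiom you flag as the main obstacle is automatic from the purely local form of your definition).
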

\begin{proof}
Let $\{V_i\}_{i \in I}$ be a set of diffeological vector spaces. If the index set $I$ is finite, then one can see directly that coproduct of these $V_i$ exists in $\DVect$, and it is $\prod_{i \in I} V_i$ with the product diffeology. For a general index set $I$, we first construct a category $\mathcal{I}$ with objects finite subsets of $I$ and morphisms inclusion maps. It is clear that the category $\mathcal{I}$ is filtered. There is a canonical functor $G:\mathcal{I} \ra \DVect$ sending $J \subseteq J'$ to the natural map $\prod_{j \in J} V_j \ra \prod_{j \in J'} V_j$. Write $U:\DVect \ra \Diff$ for the forgetful functor. Then by the filteredness of $\mathcal{I}$, it is easy to show that $\colim(U \circ G)$ in $\Diff$ is a diffeological vector space. Hence, it is coproduct of $\{V_i\}_{i \in I}$ in $\DVect$. In other words, it is the set $V=\oplus_{i \in I}V_i$ with the final diffeology for all canonical maps $\prod_{j \in J}V_j \ra V$ for all finite subsets $J$ of the index set $I$. 
\end{proof}

\begin{thm}\label{thm:dvscocomplete}
The category $\DVect$ is cocomplete.
\end{thm}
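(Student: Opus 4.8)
The plan is to invoke the standard categorical fact that a category possessing all small coproducts and all coequalizers is cocomplete (the dual of the familiar statement that products together with equalizers yield completeness; see~\cite{M}). Since $\DVect$ has arbitrary coproducts by Proposition~\ref{prop:coproduct}, it suffices to exhibit coequalizers in $\DVect$.

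Given a parallel pair of smooth linear maps $f,g:V \ra W$, I would set $N := \{f(v)-g(v) \mid v \in V\}$, which is a linear subspace of $W$ since it is the image of the smooth linear map $f-g$, and then form the quotient vector space $W/N$ equipped with the quotient diffeology. By Example~\ref{ex:quotient} this is a diffeological vector space, and the canonical projection $q:W \ra W/N$ is a smooth linear map satisfying $q \circ f = q \circ g$. To check the universal property, let $h:W \ra Z$ be any smooth linear map with $h \circ f = h \circ g$; then $h$ vanishes on $N$, so there is a unique linear map $\bar h:W/N \ra Z$ with $\bar h \circ q = h$. Because $q$ is a subduction (it is the quotient map for the quotient diffeology) and $\bar h \circ q = h$ is smooth, $\bar h$ is smooth. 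Hence $q$ is the coequalizer of $f$ and $g$ in $\DVect$, and together with coproducts this gives cocompleteness. One can also read off an explicit formula for a general colimit of $F:\mathcal{J} \ra \DVect$: it is the quotient of $\coprod_{j} F(j)$ by the linear subspace generated by all elements of the form $F(u)(x) - x$ (with $u$ ranging over morphisms of $\mathcal{J}$ and $x$ over the relevant $F(j)$), carried out in $\DVect$.

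I do not expect a genuine obstacle here. The only two points needing a moment's care are the identification of the algebraic coequalizer in $\Vect$ with $W/N$ (immediate, since $N$ is already a subspace, so the subspace it generates is itself) and the descent of smoothness along the subduction $q$, which is a standard property of quotient diffeologies in $\Diff$. Factoring the argument through coproducts and coequalizers, rather than writing the general colimit formula from scratch, keeps the bookkeeping minimal.
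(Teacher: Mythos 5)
Your proposal is correct and follows essentially the same route as the paper, which simply cites Example~\ref{ex:quotient} (quotients of diffeological vector spaces) together with Proposition~\ref{prop:coproduct} (arbitrary coproducts) and leaves the coequalizer construction implicit. You have merely spelled out the details — forming $W/N$ with the quotient diffeology and using the subduction property of $q$ to descend smoothness — which is exactly what the paper's two references encode.
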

\begin{proof}
This follows directly from Example~\ref{ex:quotient} and Proposition~\ref{prop:coproduct}.
\end{proof}

Now we discuss how to define hom-objects in $\DVect$:

Recall that given two diffeological vector spaces $V$ and $W$, we write $L^\infty(V,W)$ for the set of all smooth linear maps $V \ra W$. Since $L^\infty(V,W)$ is a linear subspace of $C^\infty(V,W)$, by Example~\ref{ex:sub} and Example~\ref{ex:functionspace}, $L^\infty(V,W)$ with the sub-diffeology of $C^\infty(V,W)$ is a diffeological vector space. From now on, $L^\infty(V,W)$ is always equipped with this diffeology (called the \dftxt{functional diffeology}) when viewed as a diffeological (vector) space. As an easy consequence, the evaluation map $V \times L^\infty(V,W) \ra W$ is smooth.

\begin{ex}
Let $V$ be a diffeological vector space. Then the map $L^\infty(\R,V) \ra V$ defined by $f \mapsto f(1)$ is an isomorphism in $\DVect$.
\end{ex}

In order to define tensor products for diffeological vector spaces, we need the following proposition, which will be very useful throughout this paper.

\begin{prop}\label{prop:free}
The forgetful functor $\DVect \ra \Diff$ has a left adjoint.
\end{prop}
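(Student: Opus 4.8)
The plan is to construct the left adjoint explicitly by equipping the free vector space $\R[X]$ on the underlying set of a diffeological space $X$ with a suitable diffeology, and then to verify the universal property. Recall that $\R[X]$ consists of finitely supported functions $X \to \R$, with basis elements $[x]$ for $x \in X$. The unit of the adjunction should be the map $i \colon X \to \R[X]$, $x \mapsto [x]$, and we want $i$ to be smooth and to be initial among smooth maps from $X$ to diffeological vector spaces. This forces the diffeology on $\R[X]$ to be the finest vector space diffeology for which $i$ is smooth.

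First I would pin down the plots concretely. For each $n$, a natural class of plots $U \to \R[X]$ arises from a plot $p \colon U \to X^k$ (equivalently $k$ plots $p_1, \dots, p_k$ of $X$) together with smooth functions $r_1, \dots, r_k \colon U \to \R$, yielding $u \mapsto \sum_{j=1}^k r_j(u) [p_j(u)]$. I would then \emph{define} the diffeology on $\R[X]$ to consist of all functions $U \to \R[X]$ that locally (on an open cover of $U$) have this form, and check the three axioms of a diffeological space — covering and sheaf condition are immediate from the "locally" clause, and smooth compatibility follows since precomposing a plot of $X^k$ and smooth functions on $U$ with a smooth map $V \to U$ again gives data of the same shape. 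Then I would verify that addition $\R[X] \times \R[X] \to \R[X]$ and scalar multiplication $\R \times \R[X] \to \R[X]$ are smooth: given plots of the two factors, locally of the standard form, their sum (resp. scalar multiple) is again locally of the standard form, possibly after refining the cover and concatenating the lists of plots and coefficient functions. Hence $\R[X]$ is a diffeological vector space, and $i$ is a plot (take $k=1$, $p_1 = \mathrm{id}$ on a chart, $r_1 = 1$), so $i$ is smooth.

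Next I would establish the universal property: for any diffeological vector space $W$ and any smooth map $f \colon X \to W$, there is a unique smooth linear map $\bar f \colon \R[X] \to W$ with $\bar f \circ i = f$. Uniqueness and existence as a \emph{linear} map are just the usual universal property of the free vector space on a set: $\bar f(\sum a_x [x]) = \sum a_x f(x)$. The only thing to check is that this $\bar f$ is smooth. Given a plot of $\R[X]$, locally of the form $u \mapsto \sum_j r_j(u)[p_j(u)]$, its image under $\bar f$ is $u \mapsto \sum_j r_j(u) f(p_j(u))$; since each $f \circ p_j$ is a plot of $W$, each $r_j \cdot (f \circ p_j)$ is a plot of $W$ (by smoothness of scalar multiplication on $W$), and the finite sum is a plot of $W$ (by smoothness of addition on $W$). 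By the sheaf axiom the global map is a plot, so $\bar f$ is smooth. This gives the natural bijection $C^\infty(X, U W) \cong L^\infty(\R[X], W)$, and naturality in both variables is routine.

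The main obstacle, and the step I would be most careful about, is checking that the proposed collection of functions genuinely satisfies the smooth-compatibility and sheaf axioms while also being closed enough under the vector space operations — in other words, that "locally of standard form" is a robust enough description. In particular one must be slightly careful that the standard-form plots are not required to have a globally constant length $k$: allowing the cover and the list of plots $p_j$ and coefficients $r_j$ to vary over the cover is exactly what makes the sheaf axiom hold and what makes $\bar f$ well-defined on all plots. Once that is set up correctly, everything else is the standard free–forgetful adjunction bookkeeping.
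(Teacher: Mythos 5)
Your construction is the same as the paper's: the paper equips $F(X)$ with the diffeology generated by the maps $(r_1,u_1,\ldots,r_n,u_n)\mapsto\sum_j r_j[p_j(u_j)]$ for plots $p_j$ of $X$, which unwinds to exactly your ``locally of the form $u\mapsto\sum_j r_j(u)[p_j(u)]$'' description, and the verification of the universal property is identical. Your write-up just makes explicit the diffeology axioms and the smoothness of the vector space operations, which the paper leaves as ``clear.''
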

\begin{proof}
Given a diffeological space $X$, write $F(X)$ for the free vector space generated by the underlying set of $X$. For $p_1:U_1 \ra X, \cdots, p_n:U_n \ra X$ plots of $X$, write 
\[
\R \times U_1 \times \cdots \times \R \times U_n \ra F(X)
\] 
for the map defined by $(r_1,u_1,\ldots,r_n,u_n) \mapsto r_1 [p_1(u_1)]+\ldots+r_n[p_n(u_n)]$. Equip $F(X)$ with the diffeology generated by all such maps for all $n \in \Z^+$. It is clear that this is the smallest diffeology on $F(X)$ such that $F(X)$ is a diffeological vector space and the canonical map $i_X:X \ra F(X)$ is smooth. We call $F(X)$ with this diffeology the \dftxt{free diffeological vector space generated by the diffeological space $X$}. Moreover, we have the universal property that for any diffeological vector space $V$ and any smooth map $f:X \ra V$, there exists a unique smooth linear map $g:F(X) \ra V$ making the following triangle commutative:
\[
\xymatrix@C5pt{X \ar[dr]_f \ar[rr]^-{i_X} && F(X)  \ar[dl]^{g} \\ & V.}
\]
Therefore, we can define $F:\Diff \ra \DVect$ by sending $X \ra Y$ to the unique smooth linear map $F(X) \ra F(Y)$ defined by the universal property. Then $F$ is a functor which is left adjoint to the forgetful functor $\DVect \ra \Diff$.
\end{proof}

In particular, if $f:X \ra Y$ is a subduction of diffeological spaces, then $F(f):F(X) \ra F(Y)$ is a linear subduction of (free) diffeological vector spaces.

\begin{rem}\label{rem:naturalisomorphisms}
The universal property of free diffeological vector spaces says that if $X$ is a diffeological space and $V$ is a diffeological vector space, then there is a natural bijection between $C^\infty(X,V)$ and $L^\infty(F(X),V)$. Indeed this is an  isomorphism in $\DVect$.

Similarly, if $\{X_i\}_{i \in I}$ is a set of diffeological spaces and $\{V_j\}_{j \in J} \cup \{V\}$ is a set of diffeological vector spaces, then we have natural isomorphisms in $\DVect$ between $C^\infty(\coprod_{i \in I} X_i,V)$ and $\prod_{i \in I} C^\infty(X_i,V)$, between $L^\infty(V,\prod_{j \in J} V_j)$ and $\prod_{j \in J} L^\infty(V,V_j)$, and between $L^\infty(\oplus_{j \in J} V_j,V)$ and $\prod_{j \in J} L^\infty(V_j,V)$. 
\end{rem}

\begin{ex}\label{ex:notfree}
\dftxt{Not} every diffeological vector space is a free diffeological vector space generated by a diffeological space. $\R_{ind}$ is such an example. 
\end{ex}

Now we discuss tensor products in $\DVect$:

Let $V$ and $W$ be two diffeological vector spaces. Since the vector space $V \otimes W$ is a quotient vector space of the free diffeological vector space $F(V \times W)$ generated by the product space $V \times W$, by Example~\ref{ex:quotient}, $V \otimes W$ is a diffeological vector space with the quotient diffeology. From now on, we always equip $V \otimes W$ with this diffeology.

As usual, we have the following adjoint pair:

\begin{thm}
For any diffeological vector space $V$, there is an adjoint pair
\[
\blank \otimes V:\DVect \rightleftharpoons \DVect:L^\infty(V,\blank).
\]
\end{thm}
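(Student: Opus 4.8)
The plan is to produce, for all diffeological vector spaces $A$ and $B$, a bijection
\[
L^\infty(A \otimes V, B) \;\cong\; L^\infty(A, L^\infty(V,B))
\]
natural in both variables; together with the evident functoriality of $\blank \otimes V$ (it is the composite of $\blank \times V$, the free-functor $F$, and a quotient, all functorial) and of $L^\infty(V,\blank)$ (post-composition), this is precisely the asserted adjoint pair. The bridge between the two sides will be the set of \emph{smooth bilinear maps} $A \times V \to B$, meaning set maps that are $\R$-linear in each variable separately and smooth as maps of diffeological spaces.

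First I would identify $L^\infty(A \otimes V, B)$ with the set of smooth bilinear maps $A \times V \to B$. By construction $A \otimes V$ is the quotient, with the quotient diffeology, of the free diffeological vector space $F(A \times V)$ by the subspace $N$ spanned by the usual bilinearity relations, and the quotient map $q \colon F(A\times V) \to A\otimes V$ is a linear subduction. Hence a smooth linear map $A \otimes V \to B$ is the same datum as a smooth linear map $F(A \times V) \to B$ vanishing on $N$ (smoothness descends along a subduction, and surjectivity of $q$ forces linearity of the factored map). By the universal property of $F$ (Proposition~\ref{prop:free}, cf.\ Remark~\ref{rem:naturalisomorphisms}), smooth linear maps $F(A \times V) \to B$ correspond naturally to smooth maps $A \times V \to B$, and those vanishing on $N$ are exactly the bilinear ones. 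So $L^\infty(A \otimes V, B)$ is in natural bijection with the smooth bilinear maps $A \times V \to B$.

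Next I would match the smooth bilinear maps $A \times V \to B$ with $L^\infty(A, L^\infty(V,B))$. Given a smooth bilinear $\phi \colon A \times V \to B$, set $\hat\phi \colon A \to L^\infty(V,B)$, $a \mapsto \phi(a, \blank)$; each $\phi(a,\blank)$ is linear and is smooth, being the composite $V \cong \{a\} \times V \hookrightarrow A \times V \xrightarrow{\phi} B$, so $\hat\phi$ is well defined, and it is linear by bilinearity of $\phi$. To see $\hat\phi$ is smooth, take a plot $p \colon U \to A$; since $L^\infty(V,B)$ carries the sub-diffeology of the functional diffeology on $C^\infty(V,B)$ and $\Diff$ is cartesian closed, $\hat\phi \circ p$ is a plot iff its adjoint $U \times V \to B$, $(u,v) \mapsto \phi(p(u),v)$, is smooth; and this map factors as $U \times V \xrightarrow{p \times \mathrm{id}_V} A \times V \xrightarrow{\phi} B$, so it is. Conversely, a smooth linear $\psi \colon A \to L^\infty(V,B)$ yields $\phi \colon A \times V \to B$, $\phi(a,v) = \psi(a)(v)$, which is bilinear and is smooth because it is the composite of $\psi \times \mathrm{id}_V$ with the (smooth) evaluation map $L^\infty(V,B) \times V \to B$. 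These two assignments are mutually inverse, and naturality in $A$ and $B$ is a routine check.

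Composing the two bijections gives the required natural isomorphism $L^\infty(A \otimes V, B) \cong L^\infty(A, L^\infty(V,B))$, hence the adjunction. The step I expect to require the most care is keeping the three diffeologies straight and verifying that smoothness genuinely passes \emph{both} ways through the two reductions used: through the subduction $q \colon F(A\times V) \twoheadrightarrow A \otimes V$ (so that maps out of the quotient are controlled exactly by maps out of $F(A\times V)$), and through the cartesian-closed adjunction $C^\infty(U \times V, B) \cong C^\infty(U, C^\infty(V,B))$ after restricting to linear maps in the appropriate slots. Once those are pinned down, bilinearity on one side corresponds to linearity on the other on the nose, and the rest is bookkeeping.
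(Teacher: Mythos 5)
Your proof is correct and follows essentially the same route as the paper's: the paper's (much terser) argument invokes exactly the two ingredients you elaborate, namely the universal property of the free diffeological vector space $F(A\times V)$ together with the subduction onto $A\otimes V$, and the correspondence between smooth bilinear maps $A\times V\to B$ and smooth linear maps $A\to L^\infty(V,B)$. Your version simply fills in the details the paper leaves to the reader.
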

\begin{proof}
This follows from the universal property of free diffeological vector space in the proof of Proposition~\ref{prop:free}, and the fact that for diffeological vector spaces $V$, $W$ and $A$, a map $W \ra L^\infty(V,A)$ is smooth linear if and only if the adjoint set map $W \times V \ra A$ is smooth bilinear.
\end{proof}

Here are some basic properties of tensor product of diffeological vector spaces.

\begin{rem}\
\begin{enumerate}
\item Given diffeological vector spaces $V_1$, $V_2$ and $V_3$, $V_1 \otimes V_2$ is naturally isomorphic to $V_2 \otimes V_1$ in $\DVect$, and $(V_1 \otimes V_2) \otimes V_3$ is naturally isomorphic to $V_1 \otimes (V_2 \otimes V_3)$ in $\DVect$. The second isomorphism follows from the fact that the canonical projection map $F(V_1 \times V_2 \times V_3)$ to either $(V_1 \otimes V_2) \otimes V_3$ or $V_1 \otimes (V_2 \otimes V_3)$ is a (linear) subduction.

\item Given a set of diffeological vector spaces $\{V_i\}_{i \in I} \cup \{W\}$, $(\oplus_{i \in I} V_i) \otimes W$ is isomorphic to $\oplus_{i \in I} (V_i \otimes W)$ in $\DVect$.

\item For any diffeological vector space $V$, the map $V \ra V \otimes \R$ defined by $v \mapsto v \otimes 1$ is an isomorphism in $\DVect$.
\end{enumerate}
\end{rem}

\begin{prop}\label{prop:productvstensor}
Let $X$ and $Y$ be diffeological spaces. Then $F(X \times Y)$ is isomorphic to $F(X) \otimes F(Y)$ in $\DVect$.
\end{prop}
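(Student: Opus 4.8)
The plan is to exhibit a natural isomorphism by playing off the universal properties in the two orders, so that no diffeology needs to be computed by hand. First I would observe that both $F(X\times Y)$ and $F(X)\otimes F(Y)$ represent the same functor on $\DVect$: by Proposition~\ref{prop:free}, smooth linear maps $F(X\times Y)\to V$ correspond naturally to smooth maps $X\times Y\to V$; on the other side, by the tensor--hom adjunction together with Proposition~\ref{prop:free} applied twice, smooth linear maps $F(X)\otimes F(Y)\to V$ correspond to smooth linear maps $F(X)\to L^\infty(F(Y),V)$, hence to smooth maps $X\to L^\infty(F(Y),V)$, hence (by cartesian closedness of $\Diff$, since $L^\infty(F(Y),V)$ carries the functional diffeology) to smooth maps $X\times Y\to V$ whose partial maps in the second variable extend linearly --- but since $F(Y)$ is generated as a diffeological vector space by $i_Y(Y)$, a smooth map $X\to L^\infty(F(Y),V)$ is the same as a smooth map $X\to C^\infty(Y,V)$, i.e.\ a smooth map $X\times Y\to V$. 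So both sides co-represent $X\times Y\mapsto C^\infty(X\times Y,V)$, and Yoneda gives the isomorphism.

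Alternatively, and perhaps more cleanly for the writeup, I would construct the maps explicitly. The smooth map $X\times Y \xrightarrow{i_X\times i_Y} F(X)\times F(Y)\to F(X)\otimes F(Y)$ induces, by the universal property of $F(X\times Y)$, a smooth linear map $\varphi\colon F(X\times Y)\to F(X)\otimes F(Y)$ with $[(x,y)]\mapsto [x]\otimes[y]$. For the inverse, the smooth bilinear map $F(X)\times F(Y)\to F(X\times Y)$ determined (on generators, then extended bilinearly and checked smooth via the generating plots of $F(X)$ and $F(Y)$) by $([x],[y])\mapsto [(x,y)]$ factors through $F(X)\otimes F(Y)$ by the tensor--hom adjunction, giving a smooth linear $\psi\colon F(X)\otimes F(Y)\to F(X\times Y)$. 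Then $\varphi$ and $\psi$ are mutually inverse: on the respective generating sets $\{[(x,y)]\}$ and $\{[x]\otimes[y]\}$ the composites are the identity, and since these sets span the underlying vector spaces, $\varphi\circ\psi=\mathrm{id}$ and $\psi\circ\varphi=\mathrm{id}$ as linear maps, hence as smooth linear maps.

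The main technical point --- the only place where a little care is needed --- is checking that the bilinear map $F(X)\times F(Y)\to F(X\times Y)$ defined on generators is actually smooth. For this I would use the description of the diffeology on $F(X)$ from the proof of Proposition~\ref{prop:free}: a plot of $F(X)\times F(Y)$ factors (locally) through a map of the form $(\mathbf r,\mathbf u,\mathbf s,\mathbf v)\mapsto(\sum r_i[p_i(u_i)],\sum s_j[q_j(v_j)])$, and one must verify the composite into $F(X\times Y)$ is a plot --- it is, because it equals $\sum_{i,j} r_i s_j\,[(p_i\times q_j)(u_i,v_j)]$, which is visibly a plot of $F(X\times Y)$ by the generating family for $F(X\times Y)$ built from the plots $p_i\times q_j$ of $X\times Y$. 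With smoothness of $\psi$ in hand, everything else is formal. I expect this smoothness verification to be the only real obstacle, and it is routine given the explicit generating diffeologies.
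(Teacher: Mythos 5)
Your proposal is correct. Your second, explicit construction is essentially the paper's proof: the forward map $F(X\times Y)\to F(X)\otimes F(Y)$ is obtained exactly as in the paper from $i_X\times i_Y$ and the universal property of $F(X\times Y)$, and your smoothness check for the inverse --- composing a generating plot $(\mathbf r,\mathbf u,\mathbf s,\mathbf v)\mapsto\bigl(\sum_i r_i[p_i(u_i)],\sum_j s_j[q_j(v_j)]\bigr)$ with the bilinear map to land on $\sum_{i,j}r_is_j\,[(p_i(u_i),q_j(v_j))]$ --- is the same computation the paper performs; you merely route it through the universal property of $\otimes$ (smooth bilinear maps out of $F(X)\times F(Y)$ correspond to smooth linear maps out of $F(X)\otimes F(Y)$), whereas the paper checks directly that the canonical projection $F(F(X)\times F(Y))\to F(X\times Y)$ is smooth and uses that $F(X)\otimes F(Y)$ carries the quotient diffeology of $F(F(X)\times F(Y))$; these are interchangeable. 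Your first, Yoneda-style argument is genuinely different and avoids all plot computations: it chains the natural isomorphisms $L^\infty(F(X)\otimes F(Y),V)\cong L^\infty(F(X),L^\infty(F(Y),V))\cong C^\infty(X,L^\infty(F(Y),V))\cong C^\infty(X,C^\infty(Y,V))\cong C^\infty(X\times Y,V)\cong L^\infty(F(X\times Y),V)$ and invokes Yoneda. The one step you should state more carefully there (your phrasing about ``partial maps extending linearly'' is a detour) is that the middle isomorphism requires $L^\infty(F(Y),V)\cong C^\infty(Y,V)$ as diffeological spaces, not merely as sets --- this is precisely Remark~\ref{rem:naturalisomorphisms} --- after which cartesian closedness of $\Diff$ finishes the chain. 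The abstract route buys brevity and makes naturality automatic; the paper's route is self-contained at the level of generating plots and does not need the enriched form of the adjunctions.
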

\begin{proof}
Since $i_X:X \ra F(X)$ and $i_Y:Y \ra F(Y)$ are smooth, so is $i_X \times i_Y:X \times Y \ra F(X) \times F(Y)$ and hence the composite $i_{F(X) \times F(Y)} \circ (i_X \times i_Y):X \times Y \ra F(F(X) \times F(Y))$, which induces a smooth linear map $F(X \times Y) \ra F(F(X) \times F(Y))$. So we get a smooth linear map $F(X \times Y) \ra F(X) \otimes F(Y)$ given by $\sum c_j [x_j,y_j] \mapsto \sum c_j [x_j] \otimes [y_j]$. It is known from general algebra that this map is an isomorphism in $\Vect$. We are left to show that the canonical projection map $F(F(X) \times F(Y)) \ra F(X \times Y)$ given by $\sum_i a_i [\sum_j b_{ij} [x_j], \sum_k c_{ik} [y_k]] \mapsto \sum_{i,j,k} a_i b_{ij} c_{ik} [x_j,y_k]$ is smooth. This follows directly from the description of the diffeology on the free diffeological vector space generated by a diffeological space in the proof of Proposition~\ref{prop:free}.
\end{proof}

Now we discuss the dual to a diffeological vector space:

Let $V$ be a diffeological vector space. Write $D(V)$ for the dual diffeological vector space $L^\infty(V,\R)$. Then $D$ is a functor $\DVect \ra \DVect^{op}$, and we have a natural transformation $1 \ra D^2:\DVect \ra \DVect$. 

\begin{ex}
It is \dftxt{not} true that for every finite dimensional diffeological vector space $V$, the canonical map $V \ra D^2(V)$ is a diffeomorphism. And it is \dftxt{not} true that for every diffeological vector space $V$, the canonical map $V \ra D^2(V)$ is injective. For example, $D(\R_{ind})=\R^0=D^2(\R_{ind})$. On the other hand, the canonical map $V \ra D^2(V)$ is injective if and only if $D(V)$ separates points, that is, for any $v \neq v' \in V$, there exists $l \in D(V)$ such that $l(v) \neq l(v')$.
\end{ex}

Here are some isomorphism theorems:

\begin{prop}\label{prop:iso}\
\begin{enumerate}
\item Let $f:V \ra W$ be a linear subduction between diffeological vector spaces. Then $\tilde{f}:V/\ker(f) \ra W$ defined by $\tilde{f}(v+\ker(f))=f(v)$ is an isomorphism in $\DVect$.

\item If $A$ is a diffeological vector space, $B$ is a linear subspace of $A$, and $C$ is a linear subspace of $B$, then $(A/C)/(B/C)$ is isomorphic to $A/B$ in $\DVect$.
\end{enumerate}
\end{prop}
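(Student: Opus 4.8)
The plan is to establish (1) directly from the definition of a linear subduction together with the universal property of the quotient diffeology, and then to deduce (2) from (1).

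For (1), the first isomorphism theorem in $\Vect$ already provides that $\tilde{f}$ is a well-defined linear bijection, so the only issue is smoothness of $\tilde{f}$ and of $\tilde{f}^{-1}$. Write $\pi:V \ra V/\ker(f)$ for the canonical quotient map, which is a linear subduction by the definition of the quotient diffeology. Smoothness of $\tilde{f}$ is immediate: since $\tilde{f} \circ \pi = f$ is smooth and a map out of a quotient diffeological space is smooth as soon as its precomposition with the quotient map is, $\tilde{f}$ is smooth. The substance of the statement is the smoothness of $\tilde{f}^{-1}$. For this I would take an arbitrary plot $p:U \ra W$; since $f$ is a subduction, every point of $U$ has an open neighbourhood $U_0$ on which $p$ lifts along $f$ to a plot $q:U_0 \ra V$ with $f \circ q = p|_{U_0}$. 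Then $\pi \circ q:U_0 \ra V/\ker(f)$ is a plot, and it coincides with $\tilde{f}^{-1} \circ p|_{U_0}$ because $\tilde{f}(\pi(q(u))) = f(q(u)) = p(u)$. Hence $\tilde{f}^{-1} \circ p$ is a plot locally on $U$, and therefore a plot by the sheaf condition; so $\tilde{f}^{-1}$ is smooth, and it is linear as the inverse of a linear bijection. This local-lifting-and-reglue argument is the only point that is not purely formal; the main obstacle, such as it is, is simply to phrase it carefully.

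For (2), consider the composite $g:A \ra A/C \ra (A/C)/(B/C)$ of the two canonical quotient maps. Each of these is a linear subduction (quotient diffeology), and a composite of linear subductions is again a linear subduction, so $g$ is a linear subduction. An elementary computation, using $C \subseteq B$, shows that $\ker(g) = \{a \in A : a + C \in B/C\} = B$. Applying (1) to $g$ then yields an isomorphism $A/B = A/\ker(g) \cong (A/C)/(B/C)$ in $\DVect$. I do not expect any obstacle here beyond the (routine) verification that $\ker(g) = B$.
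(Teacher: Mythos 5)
Your proposal is correct; the paper itself only says ``This is easy,'' and your argument supplies exactly the standard details one would expect: the local lifting of plots along the subduction $f$ plus the sheaf condition for the smoothness of $\tilde{f}^{-1}$ in (1), and the identification $\ker(g)=B$ for the composite subduction in (2). (A marginally slicker route for (1) is to note that $\tilde{f}$ is a bijective subduction, since $f=\tilde{f}\circ\pi$ is a subduction, and a bijective subduction is automatically an isomorphism; but this is the same idea.)
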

\begin{proof}
This is easy.
\end{proof}

Now the following result is clear:

\begin{cor}\label{cor:quotient}
Every diffeological vector space is isomorphic to a quotient vector space of a free diffeological vector space in $\DVect$. 
\end{cor}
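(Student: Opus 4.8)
The plan is to realize an arbitrary diffeological vector space $V$ as a quotient of the free diffeological vector space on its own underlying diffeological space. First I would take the diffeological space underlying $V$ and form $F(V)$, the free diffeological vector space generated by it, as in Proposition~\ref{prop:free}. Applying the universal property stated there to the identity smooth map $\mathrm{id}_V : V \ra V$ (viewing the target as a diffeological vector space and the source as a diffeological space), I obtain a unique smooth linear map $g : F(V) \ra V$ with $g \circ i_V = \mathrm{id}_V$.

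Next I would observe that $g$ is surjective as a linear map — already because $g \circ i_V = \mathrm{id}_V$ forces $g$ to be onto — and, more importantly, that $g$ is a \emph{linear subduction}. This is the only point needing an argument: given any plot $p : U \ra V$ of $V$, the composite $i_V \circ p : U \ra F(V)$ is a plot of $F(V)$ since $i_V$ is smooth, and $g \circ (i_V \circ p) = p$; hence every plot of $V$ lifts through $g$, so $V$ carries the quotient diffeology induced from $F(V)$ via $g$. (Equivalently, $i_V$ is a smooth section of $g$, which immediately makes $g$ a subduction.)

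Finally, I would invoke Proposition~\ref{prop:iso}(1) with the linear subduction $g : F(V) \ra V$ to conclude that $\tilde g : F(V)/\ker(g) \ra V$ is an isomorphism in $\DVect$. Since $\ker(g)$ is a linear subspace of the diffeological vector space $F(V)$, the quotient $F(V)/\ker(g)$ is a quotient vector space of a free diffeological vector space, which is exactly the assertion.

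I do not expect any real obstacle here: the construction is entirely formal, and the one substantive verification — that $g$ is a subduction — is handled by the existence of the smooth section $i_V$. The only care required is to keep straight the two roles played by $V$ (as a diffeological space when feeding it to $F$, and as a diffeological vector space when applying the universal property), which the notation $i_V$ and $g \circ i_V = \mathrm{id}_V$ makes transparent.
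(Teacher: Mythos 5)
Your proposal is correct and follows exactly the paper's own argument: form $F(V)$ on the underlying diffeological space of $V$, use the universal property applied to $1_V$ to get $\eta\colon F(V)\ra V$ with $\eta\circ i_V=1_V$, note that the smooth section $i_V$ makes $\eta$ a subduction, and conclude via Proposition~\ref{prop:iso}(1). The only difference is that you spell out the plot-lifting verification that the paper leaves implicit.
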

\begin{proof}
Let $V$ be a diffeological vector space. Then the smooth map $1_V:V \ra V$ induces a smooth linear map $\eta:F(V) \ra V$ such that $\eta \circ i_V=1_V$, where $i_V:V \ra F(V)$ is the canonical map. This equality implies that $\eta$ is a subduction. Therefore, $V$ is isomorphic to $F(V)/\ker(\eta)$ in $\DVect$.
\end{proof}

\begin{rem}
If $V$ is a diffeological vector space, then the canonical map $i_V:V \ra F(V)$ is smooth but \dftxt{not} linear. Therefore, the short exact sequence $0 \ra \ker(\eta) \ra F(V) \ra V \ra 0$ of diffeological vector spaces in the proof of the above corollary does \dftxt{not} split smoothly in general; see Example~\ref{ex:notprojective}. When $V$ is projective (see Definition~\ref{def:projective}), the above short exact sequence splits smoothly.
\end{rem}

\begin{de}\label{def:ses}
Let $A$, $V$ and $B$ be diffeological vector spaces. Let $i:A \ra V$ and $p:V \ra B$ be smooth linear maps. We say that 
\[
\xymatrix{0 \ar[r] & A \ar[r]^i & V \ar[r]^p & B \ar[r] & 0}
\]
is a short exact sequence in $\DVect$, if 
\begin{enumerate}
\item it is a short exact sequence in $\Vect$;

\item $i$ is an induction and $p$ is a subduction.
\end{enumerate}

We call $i$ a \dftxt{linear induction} and $p$ a \dftxt{linear subduction}.
\end{de}

One can show easily that if $f:V \ra W$ is a linear subduction between diffeological vector spaces, then $f^*:L^\infty(W,A) \ra L^\infty(V,A)$ is a linear induction for any diffeological vector space $A$.

\begin{thm}\label{thm:splitses}
Let 
\[
\xymatrix{0 \ar[r] & A \ar[r]^i & V \ar[r]^p & B \ar[r] & 0}
\]
be a short exact sequence in $\DVect$. The following are equivalent.
\begin{enumerate}
\item There exists a smooth linear map $r:V \ra A$ such that $r \circ i=id_A$.

\item There exists a smooth linear map $q:B \ra V$ such that $p \circ q=id_B$.

\item The short exact sequence is isomorphic to 
\[
\xymatrix{0 \ar[r] & A \ar[r]^-{i_1} & A \times B \ar[r]^-{p_2} & B \ar[r] & 0}
\]
in $\DVect$, with identity maps on $A$ and $B$. In particular, $V$ is isomorphic to $A \times B$ in $\DVect$.
\end{enumerate}
\end{thm}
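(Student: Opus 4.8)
The plan is to prove the cycle of implications $(3)\Rightarrow(1)$, $(3)\Rightarrow(2)$, $(1)\Rightarrow(3)$ and $(2)\Rightarrow(3)$; together these give the stated three-way equivalence. The implications out of $(3)$ are immediate: for the short exact sequence $0 \ra A \ra A \times B \ra B \ra 0$ the projection $A \times B \ra A$ and the inclusion $B \ra A \times B$ are smooth linear maps splitting it, so transporting them along the given isomorphism (which is the identity on $A$ and $B$) produces maps $r$ and $q$ as required in $(1)$ and $(2)$. The content is therefore entirely in showing how a one-sided splitting in $\DVect$ forces the diffeological isomorphism $V \cong A \times B$.

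For $(1)\Rightarrow(3)$, assume $r:V \ra A$ is smooth linear with $r \circ i = id_A$. Define $\phi:V \ra A \times B$ by $\phi(v)=(r(v),p(v))$. This is smooth linear, satisfies $\phi \circ i = i_1$ and $p_2 \circ \phi = p$, and is a linear isomorphism of underlying vector spaces by the usual splitting lemma in $\Vect$; the only thing to check is that $\phi^{-1}$ is smooth. Consider the smooth linear map $s:V \ra V$, $s(v)=v-i(r(v))$. Since $r \circ i = id_A$, the map $s$ takes values in $\ker(r)$ and restricts to the identity on $\ker(r)$, so (using that a map into a subspace is smooth iff it is smooth into the ambient space) $s$ defines a smooth linear retraction of $V$ onto $\ker(r)$ with its sub-diffeology. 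Because $p \circ i = 0$ we have $p \circ s = p$, so $p$ factors as the inclusion $\ker(r) \hookrightarrow V$ followed by $p$, precomposed with $s$; since $p$ is a subduction, the restriction $p|_{\ker(r)}:\ker(r) \ra B$ is a subduction, and it is injective since $\ker(p)=i(A)$ meets $\ker(r)$ only in $0$. By Proposition~\ref{prop:iso}(1), $p|_{\ker(r)}$ is an isomorphism in $\DVect$, so $(p|_{\ker(r)})^{-1}:B \ra V$ is smooth linear, and hence $\phi^{-1}(a,b)=i(a)+(p|_{\ker(r)})^{-1}(b)$ is smooth linear.

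For $(2)\Rightarrow(3)$, assume $q:B \ra V$ is smooth linear with $p \circ q = id_B$. Define $\psi:A \times B \ra V$ by $\psi(a,b)=i(a)+q(b)$; it is smooth linear, compatible with the maps on $A$ and $B$, and a linear isomorphism of underlying vector spaces. For smoothness of $\psi^{-1}$, note that $v \mapsto v-q(p(v))$ is smooth linear $V \ra V$ and lands in $\ker(p)=i(A)$, hence is smooth as a map $V \ra i(A)$ with the sub-diffeology; since $i$ is a linear induction it restricts to a diffeomorphism $A \ra i(A)$, so $i^{-1}:i(A) \ra A$ is smooth, and therefore $v \mapsto (i^{-1}(v-q(p(v))),\,p(v))$ is a smooth linear inverse of $\psi$. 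The only non-formal point throughout — and the place where the diffeological hypotheses of Definition~\ref{def:ses}, not merely the algebraic splitting, are used — is the smoothness of these inverses: in $(1)\Rightarrow(3)$ it rests on $p$ being a subduction together with Proposition~\ref{prop:iso}, and in $(2)\Rightarrow(3)$ on $i$ being an induction. This is exactly the step I expect to require care, but no genuine obstacle.
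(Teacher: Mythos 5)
Your proof is correct and follows essentially the same route as the paper: both directions construct the isomorphism via the maps $(r,p)$ and $i(a)+q(b)$, with smoothness of the inverse extracted from $p$ being a subduction in $(1)\Rightarrow(3)$ and from $i$ being an induction in $(2)\Rightarrow(3)$. The only cosmetic difference is that in $(1)\Rightarrow(3)$ you package the subduction argument through the retraction onto $\ker(r)$ and Proposition~\ref{prop:iso}(1), whereas the paper verifies directly that $q\circ p=id_V-i\circ r$ is smooth and descends along the subduction $p$; these are the same idea.
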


In this case, we say that the short exact sequence
\[
\xymatrix{0 \ar[r] & A \ar[r]^i & V \ar[r]^p & B \ar[r] & 0}
\]
in $\DVect$ \dftxt{splits smoothly}, and we call $A$ (and $B$) a \dftxt{smooth direct summand} of $V$. We will show in Example~\ref{ex:nonssplit} that not every short exact sequence in $\DVect$ splits smoothly, and in particular not every linear subspace or quotient vector space of a diffeological vector space is a smooth direct summand.

\begin{proof}
If we only consider the statements in $\Vect$ instead of $\DVect$, then this is a standard result from algebra; see~\cite[Theorem~IV.1.18]{H} for instance. We are left to prove the smoothness of certain maps. 

(3) $\Rightarrow$ (1) and (3) $\Rightarrow$ (2) are clear.

(1) $\Rightarrow$ (3): By~\cite[The Short Five Lemma~1.17]{H}, $(r,p):V \ra A \times B$ is an isomorphism in $\Vect$. Its inverse can be written as $i+q:A \times B \ra V$ for some linear map $q:B \ra V$. It is straightforward to check that $q \circ p=id_V - i \circ r$. Hence, $q \circ p$ is smooth. Then $p$ being a subduction implies that $q$ is smooth. So $i+q$ is smooth. Therefore, $(r,p)$ is an isomorphism in $\DVect$, which implies the isomorphism of the two short exact sequences in $\DVect$.

(2) $\Rightarrow$ (3) can be proved dually as (1) $\Rightarrow$ (3). 
\end{proof}

Now the following result is direct:

\begin{cor}\label{cor:decomposition}
Let $V$ and $W$ be diffeological vector spaces. 
\begin{enumerate}
\item Let $i:V \ra W$ and $r:W \ra V$ be smooth linear maps such that $r \circ i = id_V$. Then there exists a diffeological vector space $X$ such that $W$ is isomorphic to $V \times X$ in $\DVect$.

\item Let $p:W \ra V$ and $q:V \ra W$ be smooth linear maps such that $p \circ q = id_V$. Then there exists a diffeological vector space $X$ such that $W$ is isomorphic to $V \times X$ in $\DVect$.
\end{enumerate}
\end{cor}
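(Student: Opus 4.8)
The plan is to deduce both statements from Theorem~\ref{thm:splitses} by recognizing the given data as a split short exact sequence in $\DVect$. For part~(1) I would set $X := \ker(r)$, and for part~(2) $X := \ker(p)$, each equipped with the sub-diffeology inherited from $W$; the claim is then that
\[
0 \longrightarrow X \longrightarrow W \xrightarrow{r} V \longrightarrow 0
\]
(and the analogous sequence with $p$ in place of $r$ for part~(2)) is a short exact sequence in $\DVect$ that splits smoothly.

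To establish this I would verify the two requirements of Definition~\ref{def:ses}. Exactness in $\Vect$ is routine: the identity $r \circ i = id_V$ forces $r$ to be surjective with kernel precisely $X$ (and likewise $p \circ q = id_V$ makes $p$ surjective). The inclusion $X \hookrightarrow W$ is a linear induction because, by the standing convention, a linear subspace carries the sub-diffeology, which is exactly the pullback diffeology along the inclusion. The only step needing a genuine argument is that $r$ (resp.\ $p$) is a linear subduction: this follows from the elementary fact that if $h$ is smooth and $g \circ h$ is a subduction, then $g$ is a subduction --- given a plot of the target, lift it locally along $g \circ h$ and push the lift forward by $h$ --- applied with $g = r$, $h = i$, $g \circ h = id_V$ (resp.\ $g = p$, $h = q$). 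With the sequence now known to lie in $\DVect$, the map $i$ (resp.\ $q$) is a smooth linear section, so condition~(2) of Theorem~\ref{thm:splitses} is met; hence the sequence splits smoothly and $W$ is isomorphic in $\DVect$ to $X \times V$, and therefore to $V \times X$.

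I do not anticipate a serious obstacle: essentially all the work is done by Theorem~\ref{thm:splitses}, and the substantive content is just the observation that the split data organizes into a split short exact sequence. The one non-formal ingredient is the ``$g \circ h$ a subduction $\Rightarrow$ $g$ a subduction'' lemma, which is immediate from the definition of the quotient (final) diffeology. For part~(1) there is also a formally dual route using $X := W/i(V)$ with the quotient diffeology and condition~(1) of Theorem~\ref{thm:splitses}, but taking kernels as above treats both parts in a single uniform way.
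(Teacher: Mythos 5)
Your proof is correct and is exactly the ``direct'' deduction from Theorem~\ref{thm:splitses} that the paper intends: the retract data assembles into a short exact sequence $0 \ra \ker(r) \ra W \ra V \ra 0$ in $\DVect$ (inclusion of the kernel with the sub-diffeology is an induction, and a smooth linear map with a smooth section is automatically a subduction), and condition~(2) of that theorem yields the splitting $W \cong \ker(r) \times V$. Your uniform treatment of both parts via kernels, and the remark that the quotient $W/i(V)$ gives a formally dual route for part~(1), match the intended argument with no gaps.
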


\begin{rem}
It is easy to see that the category $\DVect$ is additive with kernels and cokernels. However, it is \dftxt{not} abelian, since a morphism in $\DVect$ is monic if and only if the underlying set map is injective, but not necessarily an induction. Indeed, $\DVect$ is a quasi-abelian category in the sense of~\cite[Definition~1.1.3]{Sc} with strict epimorphisms the linear subductions and strict monomorphisms the linear inductions.
\end{rem}

\begin{rem}
M. Vincent discussed tensor products and duals of diffeological vector spaces in his master thesis~\cite[Chapter~2]{V}. He also showed that a vector space with a pre-diffeology (that is, a set of functions from open subsets of Euclidean spaces to this vector space) has a smallest diffeology containing the original pre-diffeology which makes it a diffeological vector space. This is more general than the construction of free diffeological vector space generated by a diffeological space in this section. 
\end{rem}

\section{More examples}

In this section, we present two more examples of diffeological vector spaces from analysis. These examples were introduced in the framework of Fr\"olicher spaces in~\cite{KM}, and we adapt the proofs to the diffeological setting. At the end, we also get a generalized version of Borel's theorem.

\begin{ex}[Seeley's extention~\cite{Se}]
Let $D_+$ be the set of all smooth functions $f:\R^n \times \R^{>0} \ra \R$ such that $\frac{\partial^{|m|} f}{\partial (x,t)^m}$ has continuous limits as $t \ra 0$, for every $m \in \N^{n+1}$. Then $D_+$ is a linear subspace of $C^\infty(\R^n \times \R^{>0},\R)$. Moreover, there is a smooth linear map 
\[
E:D_+ \ra C^\infty(\R^n \times \R,\R)
\] 
such that $E(f)(x,t)=f(x,t)$ when $t>0$.
\end{ex}

\begin{rem}\
\begin{enumerate}
\item It is shown in~\cite{Se} that the map $E$ in the above example is continuous if both $D_+$ and $C^\infty(\R^n \times \R,\R)$ are equipped with several topologies. However, it is straightforward to see that $E$ is actually smooth in the diffeological sense. In particular, $E$ is continuous if both $D_+$ and $C^\infty(\R^n \times \R,\R)$ are equipped with the $D$-topology.

\item The inclusion map $i:\R^n \times \R^{>0} \hookrightarrow \R^n \times \R$ induces a smooth linear map 
\[
i^*:C^\infty(\R^n \times \R,\R) \ra C^\infty(\R^n \times \R^{>0},\R).
\] 
It is clear that $\im(i^*) \subseteq D_+$. By abuse of notation, we write $i^*:C^\infty(\R^n \times \R,\R) \ra D_+$. Then $i^* \circ E=id_{D_+}$. Therefore, $E$ is an induction. In particular, if we equip $C^\infty(\R^n \times \R,\R)$ with the $D$-topology, then the $D$-topology on $D_+$ is the initial topology with respect to the map $E$. Moreover, the $D$-topology on $C^\infty(\R^n \times \R,\R)$ is the weak topology~\cite[Corollary~4.10]{CSW}, which is the same as `the topology of uniform convergence of each derivative on compact subsets of $\R^{n+1}$'~\cite{Se}.

\item Let 
\[
F=\{f \in C^\infty(\R^n \times \R,\R) \mid i^*(f)=0 \in C^\infty(\R^n \times \R^{>0},\R)\}.
\]
Then $F$ is a linear subspace of $C^\infty(\R^n \times \R,\R)$. Moreover, $C^\infty(\R^n \times \R,\R)$ is isomorphic to $D_+ \times F$ in $\DVect$ guaranteed by Corollary~\ref{cor:decomposition}(1), with the isomorphism given by 
\[
C^\infty(\R^n \times \R,\R) \ra D_+ \times F, \, h \mapsto (i^*(h),h-E \circ i^*(h))
\]
and 
\[
D_+ \times F \ra C^\infty(\R^n \times \R,\R), \, (g,f) \mapsto E(g)+f.
\]
\end{enumerate}
\end{rem}

\begin{ex}\label{ex:nonssplit}
Let 
\[
\phi:C^\infty(\R,\R) \ra \prod_\omega \R \text{ with } (\phi(f))_n=f^{(n)}(0).
\]
Then $\phi$ is a smooth linear map with kernel
\[
K=\{f \in C^\infty(\R,\R) \mid f^{(n)}(0)=0 \text{ for all } n \in \N\}. 
\]
By Borel's theorem (a more general version will be proved in Claim 1 below), $\phi$ is surjective. Therefore, there is a smooth linear bijection 
\[
\bar{\phi}:C^\infty(\R,\R)/K \ra \prod_\omega \R.
\]

\textbf{Claim 1} (Generalized Borel's theorem): The map $\bar{\phi}$ is an isomorphism in $\DVect$. 

We are left to show that $\phi$ is a subduction. Let $F:U \ra \prod_\omega \R$ be a plot. For any $x \in U$, fix two open neighborhoods $V$ and $V'$ of $x$ in $U$, such that $V \subset \bar{V} \subset V'$ with $\bar{V}$ compact. Fix $h \in C^\infty(\R,\R)$ such that $h$ has compact support and $h(t)=t$ in an open neighborhood of $0$. Let 
\[
\mu_0=2 \max \{1+\|F_0(x)\| \mid x \in V\},
\] 
and let 
\[
\mu_n=2 \max \{1+\mu_{n-1}\} \cup \{1+\|\frac{D_\alpha F_n(x)}{n!}(h^n)^{(j)}(t)\| \mid x \in V, |\alpha|+j \leq n, t \in \R\}.
\] 
Then $(\mu_n)$ is an increasing sequence with $\mu_n \geq 2n+2$ for all $n \in \N$. Now we define 
\[
\tilde{G}:V \times \R \ra \R \text{ by }\tilde{G}(x,t)=\sum_{m=0}^\infty \frac{F_m(x)}{m!} (\frac{h(t \mu_m)}{\mu_m})^m.
\] 
For any $t \neq 0$, this is a finite sum of smooth functions, hence $\tilde{G}$ is smooth there. For $t$ near $0$, on the one hand, one can show by Weierstrass M-test that 
\[
\sum_{m=0}^\infty \frac{D_\alpha F_m(x)}{m!} (\frac{(h(t \mu_m)^m)^{(j)}}{\mu_m^m})
\] 
is uniformly convergent for all $x \in V$, $\alpha \in \N^{\dim(U)}$ and $j \in \N$, so $\tilde{G}$ is also smooth at $t=0$, and hence $\tilde{G}$ is smooth on $V \times \R$, that is, the adjoint map 
\[
G:V \ra C^\infty(\R,\R)
\] 
is smooth; on the other hand, 
\[
\tilde{G}(x,t) \sim \sum_{m=0}^\infty \frac{F_m(x)}{m!} t^m \text{ as } t \ra 0,
\] 
so 
\[
(\phi \circ G)_n(x)=\frac{\partial^n \tilde{G}}{\partial t^n}(x,0)=F_n(x)
\] 
for all $x \in V$ and $n \in \N$. Therefore, $\phi$ is a subduction. 

\textbf{Claim 2}: The $D$-topology on $\prod_\omega \R$ is the product topology. 

Observe that the $D$-topology contains the product topology by the definition of product diffeology. Let $A$ be a subset of $\prod_\omega \R$ such that it is not in the product topology. In other words, there exists $a=(a_0,a_1,\ldots) \in A$ such that no open neighborhood of $a$ in the product topology is contained in $A$. Let $\epsilon_i \in \R^{>0}$ for $i \in \N$, and let 
\[
U_i=\prod_{j=0}^i(a_j-\epsilon_i,a_j+\epsilon_i) \times \prod_{k \in \N \setminus \{0,1,\ldots,i\}} \R.
\] 
Clearly, $U_i$ is an open neighborhood of $a$ in the product topology, so there exists $b_i=(b_{i0},b_{i1},\ldots) \in U_i \setminus A$. We can choose $\epsilon_i$'s so that $b_{in} \ra a_n$ fast as $i \ra \infty$ for each $n \in \omega$; for definition of fast convergence, see~\cite[page~17]{KM}. By the Special Curve Lemma~\cite[page~18]{KM}, there exists a smooth map $c:\R \ra \prod_\omega \R$ such that $c(1/n)=b_n$ and $c(0)=a$. Therefore, $A$ is not $D$-open.

\textbf{Claim 3}: There is \dftxt{no} smooth linear map $q:\prod_\omega \R \ra C^\infty(\R,\R)$ such that $\phi \circ q=id_{\prod_\omega \R}$.

To prove this, it is enough to show that for every smooth linear map 
\[
q:\prod_\omega \R \ra C^\infty(\R,\R), 
\] 
$\phi \circ q$ is not injective. By~\cite[Corollary~4.10]{CSW}, the $D$-topology on $C^\infty(\R,\R)$ coincides with the weak topology, or by~\cite[Proposition~4.2]{CSW}, the $D$-topology on $C^\infty(\R,\R)$ contains the compact-open topology. Hence, 
\[
A=\{f \in C^\infty(\R,\R) \mid |f(t)|<1 \text{ for all } t \text{ with } |t| \leq 1\}
\] 
is $D$-open in $C^\infty(\R,\R)$, which implies that $q^{-1}(A)$ is $D$-open in $\prod_\omega \R$ by the smoothness of $q$. It is clear that $\vec{0}=(0,0,\ldots) \in q^{-1}(A)$ since the zero function is in $A$ and $q$ is linear, and in Claim 2 we proved that the $D$-topology on $\prod_\omega \R$ coincides with the product topology. So there exists $N \in \Z^+$ such that 
\[
\vec{0} \in B=\{x \in \prod_\omega \R \mid x_n < 1/N \text{ for all } n \leq N\} \subseteq q^{-1}(A).
\]
Therefore, for any $x \in \prod_\omega \R$ with $x_n=0$ for all $n \leq N$, for any $k \in \N$, $kx \in B$. So by the linearity of $q$
\[
k(q(x))=q(kx) \in q(B) \subseteq A,
\]
which implies $q(x)(t)=0$ for all $t$ with $|t| \leq 1$ by definition of $A$. Therefore, 
\[
(\phi \circ q(x))_n=\frac{\partial^n q(x)}{\partial t^n}(0)=0 \text{ for all } n \in \N,
\]
that is, $\phi \circ q$ is not injective.

\textbf{Conclusion}: Combining Claims 1 and 3 with Theorem~\ref{thm:splitses}, we know that 
\[
\xymatrix{0 \ar[r] & K \ar[r] & C^\infty(\R,\R) \ar[r]^-\phi & \prod_\omega \R \ar[r] & 0}
\]
is a short exact sequence in $\DVect$, but it does \dftxt{not} split smoothly. In particular, neither $K=\{f \in C^\infty(\R,\R) \mid f^{(n)}(0)=0 \text{ for all } n \in \N\}$ nor $\prod_\omega \R$ is a smooth direct summand of $C^\infty(\R,\R)$, and $C^\infty(\R,\R)$ is not isomorphic to $K \times \prod_\omega \R$ in $\DVect$.
\end{ex}

\begin{rem}
On the contrary, for any $n \in \N$, $K_n=\{f \in C^\infty(\R,\R) \mid f^{(i)}(0)=0 \text{ for all } i \leq n\}$ is a smooth direct summand of $C^\infty(\R,\R)$.
\end{rem}

\begin{rem}[Generalized Borel's theorem]\label{rem:gBorel}
Combining Claim 1 of the above example and Remark~\ref{rem:globallift}, we can restate the generalized Borel's theorem as follows: For any $U$ open subset of a Euclidean space and any set of smooth functions $\{f_i:U \ra \R\}_{i \in \N}$, there exists a smooth map $d:U \times \R \ra \R$ such that $\frac{\partial^k d}{\partial y^k}(x,0)=f_k(x)$ for all $k \in \N$ and $x \in U$.
\end{rem}

\section{Fine diffeological vector spaces}\label{s:finedvs}

In this section, we recall the definition and some basic properties of fine diffeological vector spaces. They behave like vector spaces, except for infinite products and duals. We also present some examples and non-examples of fine diffeological vector spaces. The analysis proof of Example~\ref{ex:notfine2} relates to an interesting problem in analysis. We prove that the free diffeological vector space generated by a diffeological space $X$ is fine if and only if $X$ is discrete, and that every fine diffeological vector space is Fr\"olicher.

\begin{prop}[{\cite[3.7]{I2}}]\label{prop:finediffeology}
Given any vector space $V$, there exists a smallest diffeology on $V$ making it a diffeological vector space.
\end{prop}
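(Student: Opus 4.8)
The plan is to construct the fine diffeology explicitly as the intersection (infimum) of all diffeologies making $V$ a diffeological vector space, and verify that this infimum is itself such a diffeology.

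First I would observe that the collection of all diffeologies on the fixed vector space $V$ that make $V$ a diffeological vector space is nonempty, since the indiscrete diffeology on $V$ makes all maps plots, so in particular addition and scalar multiplication are smooth. Next I would recall that the set of \emph{all} diffeologies on a fixed set is closed under arbitrary intersection: if $\{\mathcal{D}_\alpha\}_{\alpha \in A}$ is a family of diffeologies on $V$, then $\bigcap_{\alpha \in A} \mathcal{D}_\alpha$ (a plot of the intersection being a function $U \to V$ that is a plot for every $\mathcal{D}_\alpha$) again satisfies the three axioms (covering, smooth compatibility, sheaf condition), each of which is checked pointwise in $\alpha$. Let $\mathcal{D}_{fin}$ denote the intersection of all diffeologies on $V$ making it a diffeological vector space; this is a diffeology, and by construction it is contained in every such diffeology, so it is the smallest candidate. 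It remains only to show that $\mathcal{D}_{fin}$ itself makes $V$ a diffeological vector space, i.e. that addition $a : V \times V \to V$ and scalar multiplication $s : \R \times V \to V$ are smooth when the domains carry the relevant product diffeologies and the codomain carries $\mathcal{D}_{fin}$.

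The main point — and the only step requiring a little care — is verifying smoothness of $a$ and $s$ with respect to $\mathcal{D}_{fin}$. Here I would argue as follows: a map into $V$ equipped with $\mathcal{D}_{fin}$ is a plot if and only if it is a plot for \emph{every} diffeology $\mathcal{D}_\alpha$ in the defining family. So it suffices to show that for each such $\mathcal{D}_\alpha$, the composite of $a$ (resp.\ $s$) with an arbitrary plot of its domain is a plot for $\mathcal{D}_\alpha$. The subtlety is that the product diffeology on $V \times V$ appearing in the source of $a$ is formed using $\mathcal{D}_{fin}$, not $\mathcal{D}_\alpha$, and $\mathcal{D}_{fin} \subseteq \mathcal{D}_\alpha$; since the product diffeology is monotone in its factors, the $\mathcal{D}_{fin}$-product diffeology on $V \times V$ is contained in the $\mathcal{D}_\alpha$-product diffeology. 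Hence any plot of $(V \times V, \mathcal{D}_{fin}\text{-product})$ is also a plot of $(V \times V, \mathcal{D}_\alpha\text{-product})$, and composing with $a$ — which is smooth for $\mathcal{D}_\alpha$ by hypothesis — yields a $\mathcal{D}_\alpha$-plot of $V$. The same argument, using the standard Euclidean diffeology on the $\R$ factor, handles scalar multiplication. Therefore $a$ and $s$ are smooth for $\mathcal{D}_{fin}$, so $(V, \mathcal{D}_{fin})$ is a diffeological vector space, and it is by construction the smallest one.

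I expect the monotonicity observation about product diffeologies to be the one place where a reader might stumble, since it is exactly the point at which "intersection of diffeologies" interacts nontrivially with "product diffeology"; everything else is a direct unwinding of definitions. An alternative, perhaps cleaner, route is to invoke the adjunction of Proposition~\ref{prop:free}: the fine diffeology on $V$ should coincide with the diffeology of $F(V_{disc})/{\sim}$, or more directly with the pushforward of the free diffeological vector space on the set $V$ regarded as a discrete diffeological space — but that identification is itself essentially the content of a later result in the paper, so for a self-contained proof of this proposition the intersection argument is preferable.
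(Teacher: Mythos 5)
Your proof is correct. Note, though, that the paper does not prove this proposition at all: it is quoted from Iglesias-Zemmour's book \cite[3.7]{I2}, where the fine diffeology is produced \emph{explicitly} as the diffeology generated by the (injective) linear maps $\R^n \ra V$, a description the paper records immediately afterwards and uses repeatedly (e.g.\ in Theorem~\ref{thm:freeinverse} and Example~\ref{ex:notfine2}). Your route is genuinely different: you take the intersection of the nonempty family of all diffeologies making $V$ a diffeological vector space, check that arbitrary intersections of diffeologies are diffeologies, and then verify smoothness of addition and scalar multiplication for the intersection. You correctly isolate the one nontrivial point, namely that the product diffeology is monotone in its factors, so a plot of $(V\times V,\mathcal{D}_{fin}\text{-product})$ is a plot of $(V\times V,\mathcal{D}_\alpha\text{-product})$ for each $\alpha$; since a plot of a product is exactly a map whose components are plots, this is immediate, and the rest follows. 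The trade-off is the usual one between an impredicative infimum construction and an explicit generation: your argument is shorter and avoids having to check that the family of maps $(r_1,u_1,\ldots)\mapsto \sum r_i\,l_i(u_i)$ built from linear maps $\R^{n_i}\ra V$ really generates a vector-space diffeology, but it yields no description of what the fine plots actually are, which is the content one needs for the applications later in the paper. As an existence proof of the smallest such diffeology, yours is complete and self-contained; your closing remark that the identification with $F(V_{disc})$ would be circular here is also well taken.
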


\begin{de}[{\cite[3.7]{I2}}]\label{def:finedvs}
The smallest diffeology in Proposition~\ref{prop:finediffeology} is called the \dftxt{fine diffeology}. A vector space with the fine diffeology is called a \dftxt{fine diffeological vector space}.
\end{de}

Here are some facts about fine diffeological vector spaces.

\begin{itemize}
\item This diffeology is generated by all (injective) linear maps $\R^n \ra V$ for all $n \in \N$; see~\cite[3.8]{I2}. (In particular, this implies that the diffeological dimension of a diffeological vector space is always greater or equal to its vector space dimension. (See~\cite[Chapter~1 and~2]{I2} and~\cite[Section~1.8]{Wu} for the definition and basic properties of the diffeological dimension of a diffeological space.) The equality does \dftxt{not} always hold. For example, the diffeological dimension of $\R_{ind}$ is $\infty$.)

\item The fine diffeology on $\R^n$ is the standard diffeology; see~\cite[Exercise~66 on page~71]{I2}. 

\item There is an equivalence between the category of fine diffeological vector spaces with smooth linear maps and the category $\Vect$ of vector spaces with linear maps; see~\cite[3.10]{I2}. 

\item The forgetful functor $\DVect \ra \Vect$ has both a left adjoint and a right adjoint. The left adjoint is given by sending a vector space to the same space with the fine diffeology, and the right adjoint is given by sending a vector space to the same space with the indiscrete diffeology. In particular, if $V$ is a fine diffeological vector space and $W$ is a diffeological vector space, then $L^\infty(V,W)=L(V,W)$, that is, a smooth linear map $V \ra W$ is the same as a linear map $V \ra W$.

\item Every linear subspace of a fine diffeological vector space is again a fine diffeological vector space.

\item Finite product of fine diffeological vector spaces is again a fine diffeological vector space. But in general this is \dftxt{not} true for infinite product; see Example~\ref{ex:notfine}.

\item Every quotient vector space of a fine diffeological vector space is again a fine diffeological vector space. Any coproduct of fine diffeological vector spaces in $\DVect$ is again a fine diffeological vector space; see Proposition~\ref{prop:coproduct}. Therefore, any colimit of fine diffeological vector spaces in $\DVect$ is again a fine diffeological vector space.

\item Every fine diffeological vector space is a free diffeological vector space generated by any basis with the discrete diffeology. The converse is also true; see Theorem~\ref{thm:freeinverse}.

\item Let $V$ be a fine diffeological vector space, and let $A$ be a basis of $V$. Then $V$ is isomorphic to the coproduct of $|A|$-copies of $\R$ in $\DVect$. 

Let $W$ be an arbitrary diffeological vector space. Then $L^\infty(V,W)$ is isomorphic in $\DVect$ to the product of $A$-copies of $W$. In particular, if $V=\R^n$ and $W$ is fine, then $L^\infty(V,W)$ is also fine.

\item Tensor product of finitely many fine diffeological vector spaces is again a fine diffeological vector space. This follows from Proposition~\ref{prop:productvstensor} and the fact that finite product of discrete diffeological spaces is again a discrete diffeological space.

\item Let $V$ be a fine diffeological vector space. Then the canonical map $V \ra D^2(V)$ is injective. Furthermore, if $V$ is also finite dimensional, then $D(V)$ is fine and this canonical map is an isomorphism in $\DVect$. But in general, $D(V)$ may \dftxt{not} be fine; see Example~\ref{ex:dual}.

\item Let $0 \ra V_1 \ra V_2 \ra V_3 \ra 0$ be a short exact sequence of diffeological vector spaces. If $V_3$ is fine, then this short exact sequence splits smoothly. If $V_2$ is fine, then both $V_1$ and $V_3$ are fine, and hence this short exact sequence splits smoothly as well.

As an immediate consequence, it is \dftxt{not} true that for every diffeological vector space $V$, there is a linear subduction from a fine diffeological vector space to $V$. 
\end{itemize}

\begin{thm}\label{thm:freeinverse}
Let $X$ be a diffeological space such that $F(X)$ is a fine diffeological vector space. Then $X$ is discrete.
\end{thm}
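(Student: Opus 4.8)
The plan is to show directly that every plot $p \colon U \to X$ of $X$ is locally constant, which is exactly the statement that $X$ carries the discrete diffeology. The whole argument hinges on one observation: when $F(X)$ is fine, the coordinate functionals on the free vector space are smooth maps. Concretely, for each $x \in X$ let $\pi_x \colon F(X) \to \R$ be the linear functional extracting the coefficient of the basis vector $[x]$. This is an $\R$-linear map of vector spaces, and since $F(X)$ is fine the fact recorded above that $L^\infty(V,W) = L(V,W)$ for $V$ fine (with $V = F(X)$ and $W = \R$) shows that $\pi_x$ is smooth.

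Next I would fix an arbitrary plot $p \colon U \to X$ and consider, for each $x \in X$, the composite $\pi_x \circ i_X \circ p \colon U \to \R$. This is smooth, being the composite of the plot $p$ with the smooth map $i_X \colon X \to F(X)$ and the smooth map $\pi_x$. Since $i_X(p(u)) = [p(u)]$, the composite is the function $u \mapsto \delta_{p(u),x}$, that is, the indicator function of the subset $p^{-1}(x) \subseteq U$. A smooth real-valued function on an open subset of a Euclidean space that takes only the values $0$ and $1$ is locally constant, so $p^{-1}(x)$ is open in $U$ for every $x \in X$.

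Finally, $U$ is partitioned into the open sets $p^{-1}(x)$, $x \in X$, so each point of $U$ has a neighborhood on which $p$ is constant; hence $p$ is locally constant, and since $p$ was arbitrary, $X$ is discrete. I do not expect a genuine obstacle here: the only step carrying content is the smoothness of $\pi_x$, and that is delivered by the fineness hypothesis together with the standard properties of fine diffeological vector spaces listed above, while everything else is purely formal.
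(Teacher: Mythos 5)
Your proof is correct, and it takes a genuinely different route from the paper's. The paper argues by contradiction using the generating description of the fine diffeology: it builds the auxiliary plot $q(t,u)=t[p(u)]$ on $\R\times U$, factors it locally through a linear map $\R^n\ra F(X)$ near a point where $p$ fails to be locally constant, and derives that the smooth factor cannot exist. You instead use the adjunction fact that every linear map out of a fine diffeological vector space is smooth, so each coordinate functional $\pi_x\colon F(X)\ra\R$ is smooth; composing with $i_X\circ p$ turns the question into the elementary observation that a continuous $\{0,1\}$-valued function on an open subset of Euclidean space is locally constant, whence each $p^{-1}(x)$ is open and $p$ is locally constant. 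Both ingredients you use ($\{[x]\}_{x\in X}$ is a basis of $F(X)$, and $L^\infty(V,W)=L(V,W)$ for $V$ fine) are established in the paper before this theorem, so there is no circularity. Your argument is direct rather than by contradiction, avoids the auxiliary plot and the local factorization entirely, and only needs continuity rather than smoothness at the final step; the paper's argument, by contrast, stays closer to the explicit generators of the fine diffeology and is the version one would reach for if one only had the local $l\circ f$ description available. Either proof is acceptable; yours is arguably cleaner.
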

\begin{proof}
Assume that there exist a plot $p:U \ra X$ and a point $u_0 \in U$ such that for every open neighborhood $V$ of $u_0$ in $U$, there exists $v \in V$ such that $p(v) \neq p(u_0)$. Consider the plot $q:\R \times U \ra F(X)$ given by $(t,u) \mapsto t[p(u)]$. Since $F(X)$ is fine, $q=l \circ f$ for some $n \in \N$, some open neighborhoods $A$ of $t_0 \neq 0$ in $\R$ and $B$ of $u_0$ in $U$, a smooth function $f:A \times B \ra \R^n$ and a linear map $l:\R^n \ra F(X)$. Without loss of generality, we may assume that $l$ sends the canonical basis of $\R^n$ to the canonical basis of $F(X)$, i.e., for every $i \in \{1,2.\ldots,n\}$, $l(e_i)=[x_i]$ for some $x_i \in X$. Then the equality $q=l \circ f$ simply implies that $f$ can not be smooth. Therefore, $X$ is a discrete diffeological space.
\end{proof}

\begin{ex}\label{ex:notfine}
Let $\lambda$ be a fixed smooth bump function, that is, $\lambda \in C^\infty(\R,\R)$ such that $\supp(\lambda) \subset (0,1)$ and $\im(\lambda)=[0,1]$. Let $\Phi:\R \ra \prod_{\omega} \R$ be defined by 
\[
(\Phi(t))_n=\lambda[n(n+1)(t-\frac{1}{n+1})].
\]
Then $\Phi$ is smooth, and the image of any neighborhood of $0 \in \R$ under $\Phi$ does \dftxt{not} live in any finite dimensional linear subspace of $\prod_\omega \R$.    Therefore, $\prod_\omega \R$ is \dftxt{not} fine.

Another way to see this is, if $\prod_\omega \R$ were fine, then the short exact sequence of diffeological vector spaces in Example~\ref{ex:nonssplit} would split smoothly.

Furthermore, $\oplus_\omega \R$ is a linear subspace of $\prod_\omega \R$. But by using the function $\Phi$ it is easy to see that the sub-diffeology from $\prod_\omega \R$ is different from the fine diffeology on $\oplus_\omega \R$.

Let $i:\R \ra \prod_\omega \R$ be defined by $x \mapsto (x,0,0,\ldots)$. One can check easily that $i$ is a linear induction such that $p_0 \circ i=1_{\R}$. In other words, although $\prod_\omega \R$ is not a fine diffeological vector space, it has a smooth direct summand of a fine diffeological vector space.
\end{ex}

\begin{ex}\label{ex:dual}
The dual of a fine diffeological vector space may \dftxt{not} be fine. For example, if $V=\oplus_\omega \R$, the coproduct of countably many $\R$'s in $\DVect$, then $D(V)=\prod_\omega \R$, the product of countably many $\R$'s in $\DVect$. By Example~\ref{ex:notfine}, we know that $D(V)$ is not fine.
\end{ex}

\begin{ex}\label{ex:notfine2}
The diffeological vector space $C^\infty(\R,\R)$ is \dftxt{not} fine.

Here is an analysis proof. Assume that the diffeological vector space $C^\infty(\R,\R)$ is fine. Let $f:\R \ra C^\infty(\R,\R)$ be the plot with $f(x)(y)=e^{xy}$. Then there exist an open neighborhood $U$ of $0$ in $\R$, an integer $n \in \N$, a smooth map $g:U \ra \R^n$ and a linear map $h:\R^n \ra C^\infty(\R,\R)$ such that $f|_U=h \circ g$. In other words, for any $(x,y) \in U \times \R$, $e^{xy}=\sum_{k=1}^n g_k(x)h_k(y)$ for some smooth functions $g_1,\ldots,g_n,h_1,\ldots,h_n \in C^\infty(\R,\R)$.

Now fix $\delta \in \R^{>0}$ such that $\delta,2\delta,\ldots,n\delta \in U$. The $(n+1) \times (n+1)$ matrix $A:=[e^{ij\delta^2}]_{i,j=0}^n$ is equal to 
\[
\sum_{k=1}^n[g_k(0),g_k(\delta),\ldots,g_k(n \delta)]^T [h_k(0),h_k(\delta),\ldots,h_k(n \delta)],
\] 
that is, the sum of $n$ rank $1$ matrices, hence singular. On the other hand, $A$ is a Vandermonde matrix, so its determinant is 
\begin{equation*}
\prod_{\substack{i,j=0,\ldots,n \\ i > j}} (e^{i \delta^2}-e^{j \delta^2}) \neq 0.
\end{equation*}
The contradiction implies that the diffeological vector space $C^\infty(\R,\R)$ is not fine.

\medskip

Here is an algebraic proof. In Claim 1 of Example~\ref{ex:nonssplit}, we have shown that 
\[
\xymatrix{0 \ar[r] & K \ar[r] & C^\infty(\R,\R) \ar[r]^-\phi & \prod_\omega \R \ar[r] & 0}
\]
is a short exact sequence of diffeological vector spaces. Now we have two ways to argue that $C^\infty(\R,\R)$ is not fine. The first way is, if $C^\infty(\R,\R)$ is fine, then so is $\prod_\omega \R$, but it is not by Example~\ref{ex:notfine}. The second way is, if $C^\infty(\R,\R)$ is fine, then the above short exact sequence splits smoothly, which contradicts the conclusion of Example~\ref{ex:nonssplit}.
\end{ex}

Fr\"olicher spaces are another well-studied generalization of smooth manifolds. We refer the reader to~\cite{St} for definition of the category $\Fr$ of Fr\"olicher spaces and smooth maps. There is an adjoint pair $F:\Diff \rightleftharpoons \Fr:G$; see~\cite{St}. We say that a diffeological space $X$ is Fr\"olicher if there exists a Fr\"olicher space $Y$ such that $G(Y)=X$. 

\begin{prop}\label{prop:Frolicher}
Every fine diffeological vector space is Fr\"olicher.
\end{prop}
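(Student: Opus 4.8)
The plan is to realize a fine diffeological vector space $V$ as $G(Y)$ for a concrete Fr\"olicher space $Y$, using the characterization of the objects in the essential image of $G\colon\Fr\to\Diff$ as the \emph{curve-determined} diffeological spaces: a set map $p\colon U\to X$ from an open subset $U$ of a Euclidean space is a plot of such an $X$ exactly when $p\circ\gamma$ is a smooth curve in $X$ for every smooth curve $\gamma\colon\R\to U$. (One inclusion holds for every diffeological space; the other is what cuts out the Fr\"olicher ones, and it rests on Boman's theorem — a real-valued function on an open subset of a Euclidean space is smooth once it is smooth along every smooth curve.) Concretely one takes $Y$ to be the Fr\"olicher space with underlying set $|V|$, smooth curves $C^\infty(\R,V)$ and smooth functions $C^\infty(V,\R)$ (a bona fide Fr\"olicher space by the usual closure argument), and the claim reduces to: if $p\colon U\to V$ has the property that $p\circ\gamma$ is a smooth curve in $V$ for every smooth curve $\gamma\colon\R\to U$, then $p$ is a plot of $V$. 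Throughout I will use the description of the fine diffeology from Section~\ref{s:finedvs}: a map $q\colon U'\to V$ is a plot of $V$ iff every point of $U'$ has a neighbourhood on which $q$ takes values in some finite dimensional linear subspace $S\subseteq V$ and is smooth into $S$ with its standard diffeology (recall that a finite dimensional subspace of a fine diffeological vector space carries the standard diffeology, and that the inclusion $S\hookrightarrow V$ is an induction, hence smooth).

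Fix a basis $A$ of $V$ and write $p=\sum_{a\in A}p_a\,a$ for the coordinate functions $p_a\colon U\to\R$. The first step is to show each $p_a$ is smooth: since $V$ is fine we have $L^\infty(V,\R)=L(V,\R)$, so the coordinate projection $\pi_a\colon V\to\R$ is smooth, whence for every smooth curve $\gamma$ the composite $p_a\circ\gamma=\pi_a\circ(p\circ\gamma)$ is a smooth function $\R\to\R$; by Boman's theorem $p_a$ is smooth on $U$. The second step is to show that $p$ is \emph{locally finite}: every point of $U$ has a neighbourhood $W$ and a finite $J\subseteq A$ with $p_a|_W\equiv 0$ for all $a\notin J$. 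Granting this, the proof finishes at once: on such a $W$ the map $p$ takes values in $S:=\operatorname{span}_\R J$, and $p|_W\colon W\to S\cong\R^{|J|}$ is smooth because its coordinates $p_a|_W$ ($a\in J$) are smooth; so $p$ is a plot of $V$.

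Local finiteness is where the work lies. Suppose it fails at some $u_0\in U$. Then for every neighbourhood of $u_0$ the set of $a$ with $p_a$ not identically zero there is infinite, so, choosing a neighbourhood basis $W_1\supseteq W_2\supseteq\cdots$ at $u_0$ and proceeding inductively (at stage $k$ the relevant set is infinite, so one can avoid the finitely many previously chosen indices and points), one produces pairwise distinct $a_1,a_2,\ldots\in A$ and points $u_k\in W_k\setminus\{u_0\}$ with $p_{a_k}(u_k)\neq 0$; by shrinking the $W_k$ one may also arrange that $u_k\to u_0$ \emph{fast} in the sense of~\cite[page~17]{KM}. The Special Curve Lemma~\cite[page~18]{KM} (used already in Example~\ref{ex:nonssplit}) then produces a smooth curve $c\colon\R\to U$ with $c(1/k)=u_k$ and $c(0)=u_0$. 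By hypothesis $p\circ c$ is a smooth curve in $V$, hence a plot of the fine diffeology, so some interval $(-\varepsilon,\varepsilon)$ is carried by $p\circ c$ into a single finite dimensional subspace, hence into $\operatorname{span}_\R J_0$ for some finite $J_0\subseteq A$ (every finite dimensional subspace of $V$ lies in the span of finitely many basis vectors). But $(p\circ c)(1/k)=p(u_k)$ has nonzero $a_k$-coordinate, and for all large $k$ one has $1/k\in(-\varepsilon,\varepsilon)$ while $a_k\notin J_0$ (the $a_k$ being distinct and $J_0$ finite) — a contradiction. Hence $p$ is locally finite, which completes the argument.

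I expect the main obstacle to be exactly this last step — converting the failure of local finiteness into a single bad smooth curve — for which the Special Curve Lemma of~\cite{KM} is the key tool; the other essential input, Boman's theorem, is needed both for the identification of Fr\"olicher diffeological spaces with the curve-determined ones and for the smoothness of the $p_a$. If one prefers to avoid the curve characterization, the same result can be obtained by exhibiting $Y$ directly and checking $G(Y)=V$ by hand; then the obstruction to local finiteness must instead be detected by an explicitly constructed smooth function on $V$ built from bump functions in the coordinates $\pi_{a_k}$, which is a bit more delicate but uses the same ideas.
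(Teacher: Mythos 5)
Your overall skeleton --- build the candidate Fr\"olicher structure out of $C^\infty(V,\R)$ and $C^\infty(\R,V)$, reduce to showing that a map $p\colon U\to V$ detected by that structure is a plot, note that the coordinate projections are smooth because every linear functional on a fine space is, and then prove local finiteness of $p$ by contradiction --- is exactly the paper's. The gap is in the detection step, and it is a circularity rather than a fixable slip in that particular form. Your contradiction reads: $p\circ c$ is a ``smooth curve in $V$'', \emph{hence a plot of the fine diffeology}, hence locally valued in a finite-dimensional subspace. For this to be available, the curve set of your $Y$ must coincide with $C^\infty(\R,V)$; but the relevant saturation condition --- every $c\colon\R\to V$ with $l\circ c$ smooth for all $l\in C^\infty(V,\R)$ already lies in $C^\infty(\R,V)$ --- is precisely the $U=\R$ instance of the statement being proved, and it is not ``the usual closure argument'': coordinatewise smoothness of a curve into a fine space does \emph{not} make it a plot (the map $\Phi$ of Example~\ref{ex:notfine} actually lands in $\oplus_\omega\R$, has smooth coordinates, yet sends no neighbourhood of $0$ into a finite-dimensional subspace). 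Your curve-composition trick cannot supply this $U=\R$ case, since for a curve it amounts to assuming the conclusion; correspondingly, the direction of the ``curve-determined $\Leftrightarrow$ Fr\"olicher'' dictionary you actually invoke needs this same saturation as an extra input, while only the converse direction is formal from Boman's theorem. So what you prove is that $V$ is curve-determined with respect to its genuine plots with domain $\R$, which is necessary but not yet sufficient.

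The repair is to detect the failure of local finiteness with a smooth \emph{function} on $V$ rather than with a curve in $U$; this is what the paper does, and it treats every domain $U$ (including $\R$) uniformly, with no appeal to Boman or to the curve characterization. If no neighbourhood of $u$ maps into a finite-dimensional subspace, one can choose $u_n\to u$ so that the \emph{values} $p(u_n)$ (together with $p(u)$ when it is nonzero) are linearly independent --- possible because the image of every neighbourhood spans an infinite-dimensional subspace --- extend to a basis of $V$, and let $g$ be the linear functional sending each $p(u_n)$ to $1$ and all remaining basis vectors to $0$. Since $V$ is fine, $g$ is automatically smooth, and $g\circ p$ is discontinuous at $u$, contradicting the hypothesis that $l\circ p$ is smooth for every $l\in C^\infty(V,\R)$. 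Your suggested fallback via bump functions in the coordinates $\pi_{a_k}$ would also work, but it is more than is needed: on a fine space a bare linear functional suffices, provided you select the sequence by linear independence of the values rather than by nonvanishing of distinct coordinates.
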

\begin{proof}
Let $V$ be a fine diffeological vector space. Write $\mathcal{F}=C^\infty(V,\R)$ and write $\mathcal{C}=\{c:\R \ra V \mid l \circ c \in C^\infty(\R,\R) \text{ for all } l \in \mathcal{F}\}$. Then $(\mathcal{C},V,\mathcal{F})$ is a Fr\"olicher space. We are left to show that for any open subset $U$ of $\R^n$, for any set map $f:U \ra V$, if $l \circ f \in C^\infty(U,\R)$ for all $l \in \mathcal{F}$, then $f$ is a plot of $V$. 

Note that if $A$ is a basis of $V$, then $V$ is isomorphic to $\oplus_{a \in A} \R$ in $\DVect$. Hence the projection of $V$ to any $1$-dimensional linear subspace is in $\mathcal{F}$. Therefore, it is enough to show that for any $u \in U$, there exists an open neighborhood $U'$ of $u$, such that $\im(f|_{U'})$ is in a finite dimensional linear subspace of $V$.

Now assume that for every open neighborhood $U'$ of $u$ in $U$, $\im(f|_{U'})$ is not in a finite dimensional linear subspace of $V$. Then there exists a sequence $u_n \ra u$ in $U$ such that 
\begin{equation*}
\begin{split}
\{f(u_n)\}_{n \in \N} \text{ is a linearly independent subset of } V, \, & \textrm{if $f(u)=0$;} \\
\text{or } \{f(u_n)\}_{n \in \N} \cup \{f(u)\} \text{ is a linearly independent subset of } V, \, & \textrm{if $f(u) \neq 0$}.
\end{split}
\end{equation*} 
We can extend this linearly independent subset to a basis $A$ of $V$. Then by the universal property of coproduct, $g:V \ra \R$ defined by the linear extension of the map 
\[
\begin{cases}
f(u_n) \mapsto 1 \text{ for all } n \in \N \\
\text{other elements in } A \mapsto 0
\end{cases}
\]
is in $\mathcal{F}$. But clearly $g \circ f$ is not continuous, hence not smooth, contracting the assumption that $l \circ f \in C^\infty(U,\R)$ for all $l \in \mathcal{F}$.
\end{proof}

\section{Projective diffeological vector spaces}

In this section, we introduce a large class of diffeological vector spaces called projective diffeological vector spaces. They are projective with respect to linear subductions. Fine diffeological vector spaces and the free diffeological vector spaces generated by smooth manifolds are such examples. We have several equivalent characterizations of projective diffeological vector spaces in this section and next. However, \dftxt{not} every (free) diffeological vector space is projective. But there are enough projectives in the category $\DVect$ of diffeological vector spaces.

\begin{de}\label{def:projective}
We call a diffeological vector space $V$ \dftxt{projective}, if for every linear subduction $f:W_1 \ra W_2$ and every smooth linear map $g:V \ra W_2$, there exists a smooth linear map $h:V \ra W_1$ such that $g=f \circ h$.
\end{de}

\begin{prop}\label{Prop:projective}
Let $X$ be a diffeological space. Then the free diffeological vector space $F(X)$ generated by $X$ is projective if and only if for any linear subduction $f:W_1 \ra W_2$ and any smooth map $g:X \ra W_2$, there exists a smooth map $h:X \ra W_1$ making the following diagram commutative:
\[
\xymatrix{& X \ar[d]^g \ar[dl]_h \\ W_1 \ar[r]_f & W_2.}
\]
\end{prop}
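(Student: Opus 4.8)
The plan is to reduce everything to the adjunction of Proposition~\ref{prop:free}. Its universal property says that precomposition with the canonical smooth map $i_X:X \ra F(X)$ is a bijection $L^\infty(F(X),W) \ra C^\infty(X,W)$ for every diffeological vector space $W$; in particular, a smooth linear map out of $F(X)$ is completely determined by its restriction along $i_X$. The two implications are then just a matter of transporting lifting problems across this bijection.

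For the ``only if'' direction, I would begin with a linear subduction $f:W_1 \ra W_2$ and a smooth map $g:X \ra W_2$. The universal property produces a smooth linear map $\bar g:F(X) \ra W_2$ with $\bar g \circ i_X = g$. Projectivity of $F(X)$ yields a smooth linear lift $\bar h:F(X) \ra W_1$ with $f \circ \bar h = \bar g$. Then $h := \bar h \circ i_X:X \ra W_1$ is smooth, being a composite of smooth maps, and $f \circ h = f \circ \bar h \circ i_X = \bar g \circ i_X = g$, which is exactly the required diagonal filler.

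For the ``if'' direction, start with a linear subduction $f:W_1 \ra W_2$ and a smooth linear map $g:F(X) \ra W_2$, and set $g' := g \circ i_X:X \ra W_2$, which is smooth. The hypothesis gives a smooth map $h':X \ra W_1$ with $f \circ h' = g'$, and the universal property upgrades $h'$ to a smooth linear map $\bar h:F(X) \ra W_1$ with $\bar h \circ i_X = h'$. It remains to verify $f \circ \bar h = g$: both sides are smooth linear maps $F(X) \ra W_2$, and $(f \circ \bar h) \circ i_X = f \circ h' = g' = g \circ i_X$, so the uniqueness clause in the universal property forces $f \circ \bar h = g$. Thus $\bar h$ is the desired lift and $F(X)$ is projective.

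I do not expect a genuine obstacle here; the proof is formal once the adjunction is in hand. The only point requiring a little care is invoking uniqueness in the universal property at the right moment (to pass from agreement of two maps after restriction along $i_X$ to their equality on all of $F(X)$), together with the routine observation that every map produced by composition is automatically smooth, and linear where claimed.
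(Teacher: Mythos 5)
Your proof is correct and is precisely the argument the paper has in mind: the paper's own proof simply cites the definition of projectivity together with the universal property of $F(X)$, and your write-up fills in exactly those details, including the correct use of the uniqueness clause to conclude $f \circ \bar h = g$ in the ``if'' direction.
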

\begin{proof}
This follows directly from definition of projective diffeological vector space and the universal property of free diffeological vector space generated by a diffeological space.
\end{proof}

\begin{cor}\label{Cor:fine=>proj}
Every fine diffeological vector space is projective.
\end{cor}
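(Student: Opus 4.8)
The plan is to reduce everything to two facts about fine diffeological vector spaces that are already on record: that a fine diffeological vector space is the free diffeological vector space generated by any of its bases equipped with the discrete diffeology, and that smooth linear maps out of a fine diffeological vector space are the same as ordinary linear maps. Given a fine diffeological vector space $V$, first I would fix a basis $A$ of $V$, so that $V$ is isomorphic in $\DVect$ to $F(A)$ with $A$ carrying the discrete diffeology. By Proposition~\ref{Prop:projective}, it then suffices to show that for every linear subduction $f:W_1 \ra W_2$ and every smooth map $g:A \ra W_2$ there is a smooth map $h:A \ra W_1$ with $f \circ h = g$.

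The next step is the observation that any set map out of a discrete diffeological space is automatically smooth, since the plots of a discrete space are locally constant. Thus $g:A \ra W_2$ is merely a function of sets, and any function $h:A \ra W_1$ we construct will likewise be smooth. Since $f$ is a subduction it is in particular surjective on underlying sets, so for each $a \in A$ we may choose (using the axiom of choice) an element $h(a) \in W_1$ with $f(h(a))=g(a)$. This defines the desired $h$, and $f \circ h = g$ by construction; Proposition~\ref{Prop:projective} then yields that $F(A) \cong V$ is projective.

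Equivalently, one can bypass the free-object language and argue directly: with $A$ a basis of $V$, choose $w_a \in W_1$ with $f(w_a)=g(a)$ for each $a \in A$ (possible since $f$ is surjective), and let $h:V \ra W_1$ be the linear extension of $a \mapsto w_a$. Because $V$ is fine, $h$ is automatically smooth, and $f \circ h$ and $g$ are linear maps agreeing on the basis $A$, hence equal. Either formulation delivers the required lift $h$ in Definition~\ref{def:projective}.

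I expect no real obstacle here: the argument is carried entirely by the two structural facts about fine spaces, the surjectivity of subductions, and the triviality of smoothness for maps out of discrete spaces. The only point worth a word of care is precisely that a linear subduction is surjective as a set map, so that the pointwise lifting of $g$ along $f$ can be carried out basis-element by basis-element.
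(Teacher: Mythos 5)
Your proof is correct and follows the same route as the paper: the paper's proof of this corollary is exactly to combine Proposition~\ref{Prop:projective} with the fact that a fine diffeological vector space is the free diffeological vector space on a basis with the discrete diffeology, and you have simply spelled out the pointwise lifting that the paper leaves implicit. No issues.
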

\begin{proof}
This follows directly from Proposition~\ref{Prop:projective} together with the fact that every fine diffeological vector space is a free diffeological vector space generated by a discrete diffeological space.
\end{proof}

\begin{cor}\label{Cor:freeonmfd=>proj}
The free diffeological vector space generated by a smooth manifold is projective.
\end{cor}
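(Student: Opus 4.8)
The plan is to invoke the criterion of Proposition~\ref{Prop:projective}. Writing $M$ for the smooth manifold, it suffices to show that for every linear subduction $f:W_1 \ra W_2$ and every smooth map $g:M \ra W_2$ there is a smooth map $h:M \ra W_1$ with $f \circ h = g$. The strategy is to lift $g$ locally, using the defining property of a subduction, and then glue the local lifts together by means of a smooth partition of unity on $M$; the linearity of $f$ is what guarantees that the glued map is again a lift.

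First I would produce the local lifts. A smooth manifold has a basis of open sets diffeomorphic to open subsets of a Euclidean space, so for any such open set $U \subseteq M$ the restriction $g|_U$ is a plot of $W_2$. Since $f$ is a subduction, every plot of $W_2$ locally lifts through $f$ to a plot of $W_1$; shrinking the $U$'s if necessary, we obtain an open cover $\{U_i\}_{i \in I}$ of $M$ together with smooth maps $h_i:U_i \ra W_1$ such that $f \circ h_i = g|_{U_i}$ for every $i$.

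Next I would glue. By our standing conventions every smooth manifold is Hausdorff and second-countable, hence paracompact, so there is a smooth partition of unity $\{\rho_j\}_{j \in J}$ subordinate to $\{U_i\}_{i \in I}$, with $\supp(\rho_j) \subseteq U_{\sigma(j)}$ for some function $\sigma:J \ra I$ and with the family $\{\supp(\rho_j)\}_{j \in J}$ locally finite. Define $h:M \ra W_1$ by $h(x) = \sum_{j \in J} \rho_j(x)\, h_{\sigma(j)}(x)$, where each term $\rho_j h_{\sigma(j)}$ is extended by $0$ outside $\supp(\rho_j)$. Near each point of $M$ this is a finite sum, each summand is smooth (smoothness of the scalar multiplication map $C^\infty(U,\R) \times C^\infty(U,W_1) \ra C^\infty(U,W_1)$, cf.\ Example~\ref{ex:functionspace}), and finite sums of smooth maps into $W_1$ are smooth because addition on $W_1$ is smooth; hence $h$ is smooth. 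Finally, since $f$ is smooth and linear, at each $x \in M$ we get $f(h(x)) = \sum_{j} \rho_j(x)\, f(h_{\sigma(j)}(x)) = \sum_{j} \rho_j(x)\, g(x) = g(x)$, using $\sum_j \rho_j(x) = 1$. Thus $f \circ h = g$, and Proposition~\ref{Prop:projective} finishes the argument.

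The only point that needs genuine care is the smoothness of the glued map $h$: one must use the local finiteness of the partition of unity so that, in a neighborhood of each point, $h$ is a finite $\R$-linear combination with smooth coefficient functions of the smooth maps $h_i$, and then appeal to the smoothness of the vector-space operations of $W_1$. The existence of smooth partitions of unity subordinate to an arbitrary open cover of $M$ is the standard fact that makes this possible; everything else is formal.
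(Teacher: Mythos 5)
Your proof is correct and follows essentially the same route as the paper's: local lifts of $g$ through the subduction $f$ over a suitable atlas, glued by a smooth partition of unity using the vector space structure of $W_1$, with Proposition~\ref{Prop:projective} reducing the claim to this lifting property. The points you flag as needing care (extension by zero via the sheaf condition, local finiteness, smoothness of the vector space operations) are exactly the ones the paper relies on.
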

\begin{proof}
Let $M$ be a smooth manifold. For any linear subduction $f:W_1 \ra W_2$ and any smooth map $g:M \ra W_2$, by Proposition~\ref{Prop:projective}, we only need to construct a smooth map $h:M \ra W_1$ such that $g=f \circ h$.

Since $f:W_1 \ra W_2$ is a subduction and $M$ is a smooth manifold, we can find an atlas $\{U_i\}_{i \in I}$ of $M$ such that for each $i$, $U_i$ is diffeomorphic to a bounded open subset of $\R^n$ with $n=\dim(M)$, and there exists a smooth map $h_i:U_i \ra W_1$ making the following diagram commutative:
\[
\xymatrix{U_i \ \ar[drr]_{h_i} \ar@{^{(}->}[r] & M \ar[r]^g & W_2 \\ && W_1. \ar[u]_f}
\]
Let $\{\rho_i\}_{i \in I}$ be a partition of unity subordinate to this covering $\{U_i\}_{i \in I}$ of $M$. By the sheaf condition, $\rho_i h_i:M \ra W_1$ defined by $m \mapsto \rho_i(m)h_i(m)$ is smooth for each $i$. And since $W_1$ is a diffeological vector space, $h:M \ra W_1$ defined by $h(m)=\sum_{i \in I} \rho_i(m)h_i(m)$ is smooth. It is easy to check that $f(h(m))=g(m)$ for all $m \in M$.
\end{proof}

By Theorem~\ref{thm:freeinverse}, it is clear that Corollary~\ref{Cor:freeonmfd=>proj} provides a lot of projective but not fine diffeological vector spaces.

\begin{rem}\label{rem:globallift}
The above corollary implies that for any linear subduction $W_1 \ra W_2$, every plot of $W_2$ \dftxt{globally} lifts to a plot of $W_1$. This is clearly \dftxt{not} true for general subductions.

Moreover, by~\cite[8.15]{I2}, we know that every linear subduction $\pi:W_1 \ra W_2$ is a diffeological principal bundle; see~\cite[Chapter~8]{I2}. The pullback in $\Diff$ along any plot $p:U \ra W_2$ is \dftxt{globally} trivial. And for any $(u,w) \in U \times W_1$ with $p(u)=\pi(w)$, there exists a smooth map $f:U \ra W_1$ such that $p=\pi \circ f$ and $f(u)=w$.
\end{rem}

\begin{ex}\label{ex:projective1}\
\begin{enumerate}
\item Assume that
\[ 
\xymatrix{\R^0 \ar[r]^0 \ar[d]_0 & \R \ar[d]^{i_2} \\ \R \ar[r]_{i_1} & X}
\]
is a pushout diagram in $\Diff$. Then $F(X)$ is projective.

Here is the proof. Let $f:W_1 \ra W_2$ be a linear subduction, and let $g:X \ra W_2$ be a smooth map. By Corollary~\ref{Cor:freeonmfd=>proj}, there exists smooth maps $\alpha,\bar{\beta}:\R \ra W_1$ such that $f \circ \alpha=g\circ i_1$ and $f \circ \bar{\beta}=g \circ i_2$. Since $W_1$ is a diffeological vector space, we can define $\beta:\R \ra W_1$ by $\beta(y)=\bar{\beta}(y)-\bar{\beta}(0)+\alpha(0)$. Then the map $\beta$ is smooth, and  the linearity of $f$ implies that $f \circ \beta=g \circ i_2$. Now $\alpha(0)=\beta(0)$. So we have a smooth map $h:X \ra W_1$ such that $f \circ h=g$. The result then follows from Proposition~\ref{Prop:projective}. 

\item Let $X$ be the union of the coordinate axes in $\R^2$ with the sub-diffeology. Then $F(X)$ is projective.

Here is the proof. Write $i_1,i_2:\R \ra X$ for the smooth maps defined by $i_1(x)=(x,0)$ and $i_2(y)=(0,y)$, and write $i:X \ra \R^2$ for the inclusion. Let $f:W_1 \ra W_2$ be a linear subduction, and let $g:X \ra W_2$ be a smooth map. Since $W_2$ is a diffeological vector space, we can define $G:\R^2 \ra W_2$ by $G(x,y)=g(i_1(x))+g(i_2(y))-g(i_1(0))$. Then $G$ is a smooth map, and $G \circ i=g$. By Corollary~\ref{Cor:freeonmfd=>proj}, there exists a smooth map $H:\R^2 \ra W_1$ such that $f \circ H=G$. Hence $h:=H \circ i:X \ra W_1$ is a smooth map such that $f \circ h=g$.
\end{enumerate}
\end{ex}

\begin{rem}\label{rem:cofibrant=>projective}
More generally, use the terminology in~\cite{CW}, we have the following: Let $X$ and $Y$ be diffeological spaces. If either $X$ is cofibrant, or $F(Y)$ is a projective diffeological vector space and there is a cofibration $X \ra Y$, then $F(X)$ is a projective diffeological vector space. This follows from the fact that every linear subduction is a trivial fibration.
\end{rem}

However, not every (free) diffeological vector space is projective:

\begin{ex}\label{ex:notprojective}\
\begin{enumerate}
\item \dftxt{Not} every diffeological vector space is projective. From Example~\ref{ex:nonssplit}, we know that $\prod_\omega \R$ is such an example.

\item \label{ex:irrationaltorus} More surprisingly, \dftxt{not} every free diffeological vector space is projective.

Let $T_\alpha$ be the $1$-dimensional irrational torus of slope some irrational number $\alpha$, and let $\pi:\R \ra T_\alpha$ be the quotient map. Then $F(\pi):F(\R) \ra F(T_\alpha)$ is a linear subduction. We claim that $F(T_\alpha)$ is \dftxt{not} projective.

We only need to show that there exists no smooth map $h:T_\alpha \ra F(\R)$ such that $i_{T_\alpha}=F(\pi) \circ h$, where $i_{T_\alpha}:T_\alpha \ra F(T_\alpha)$ is the canonical smooth map. Otherwise, $f:=h \circ \pi \in \C^\infty(\R,F(\R))$ and $F(\pi)(f(x))=i_{T_\alpha}(\pi(x))$ for all $x \in \R$. On the one hand, $f \in \C^\infty(\R,F(\R))$ implies that there exists a connected open neighborhood $A$ of $0$ in $\R$ together with smooth maps $\alpha_i:A \ra \R$ (viewed as scalars) and $\beta_i:A \ra \R$ (viewed as base) with $1 \leq i \leq n$ for some minimum $n \in \Z^+$, such that $f(x)=\sum_{i=1}^n \alpha_i(x) [\beta_i(x)]$ for all $x \in A$. Since $(\Z + \alpha \Z) \cap A$ is dense in $A$, the map $\beta_i$ must be constant for each $i$. On the other hand, since $\pi|_A:A \ra T_\alpha$ is surjective, $F(\pi)(f(x))=i_{T_\alpha}(\pi(x))$ for all $x \in A$ implies that some $\beta_i$ can not be constant. So we reach the contradiction.

As consequences of this example,
\begin{itemize}
\item we get an independent proof that any $1$-dimensional irrational torus $T_\alpha$ is not cofibrant (see~\cite[Example~4.27]{CW} for another proof);

\item the diffeological vector space $F(T_\alpha)$ is not fine;

\item projective diffeological vector spaces are not preserved under arbitrary colimits in $\DVect$.
\end{itemize}

By a similar argument, one can show that $F^2(T_\alpha):=F(F(T_\alpha))$, the free diffeological vector space generated by $F(T_\alpha)$ is also \dftxt{not} projective. More generally, \dftxt{none} of $F^n(T_\alpha):=F(F^{n-1}(T_\alpha))$ is projective for $n \in \Z^+$.

\item $\R_{ind}$ is not projective, since there is no smooth linear map $h:\R_{ind} \ra F(\R_{ind})$ such that $\eta \circ h=1_{\R_{ind}}$, where $\eta:F(\R_{ind}) \ra \R_{ind}$ is the canonical smooth linear map induced by $1_{\R_{ind}}:\R_{ind} \ra \R_{ind}$.
\end{enumerate}
\end{ex}

Here are some basic properties for projective diffeological vector spaces:

\begin{rem}\label{rem:projective}\
\begin{enumerate}
\item \label{projiffsplit} A diffeological vector space $V$ is projective if and only if every linear subduction $W \ra V$ splits smoothly. This follows from definition of projective diffeological vector space and the fact that linear subductions are closed under pullbacks in $\DVect$.

\item \label{directsumofprojectives} Let $\{V_i\}_{i \in I}$ be a set of diffeological vector spaces. Then each $V_i$ is projective if and only if the coproduct $\oplus_{i \in I} V_i$ in $\DVect$ is projective.

\item Projective diffeological vector spaces are closed under retracts in $\DVect$, that is, if $f:V \ra W$ and $g:W \ra V$ are smooth linear maps between diffeological vector spaces such that $g \circ f=1_V$ and $W$ is projective, then $V$ is also projective.

\item Let $0 \ra V_1 \ra V_2 \ra V_3 \ra 0$ be a short exact sequence of diffeological vector spaces. If $V_3$ is projective, then this short exact sequence splits smoothly. In particular, every projective diffeological vector space is a smooth direct summand of a free diffeological vector space, but the converse is not true in general.
\end{enumerate}
\end{rem}

\begin{ex}\label{ex:projective2}\
\begin{enumerate}
\item Let $C_n$ be the cyclic subgroup of order $n$ for the multiplicative group $S^1$. Let $C_n$ act on $\R^2$ by rotation, and write $X$ for the quotient diffeological space with the quotient map $\pi:\R^2 \ra X$. Then $F(X)$ is a projective diffeological vector space. Here is the proof. Define $f:X \ra F(\R^2)$ by $x \mapsto \sum \frac{1}{n}[\bar{x}]$, where the sum is over all $\bar{x} \in \pi^{-1}(x)$. Clearly $f$ is a smooth map such that $F(X)$ is a retract in $\DVect$ of the projective diffeological vector space $F(\R^2)$. Hence, $F(X)$ is also projective.

\item Similarly, let $X_n$ be the quotient diffeological space $\R^n/\{\pm 1\}$. Then $F(X_n)$ is a projective diffeological vector space.
\end{enumerate}
\end{ex}

Recall from Example~\ref{ex:notprojective}(\ref{ex:irrationaltorus}) that the domain of the canonical linear subduction $F^2(T_\alpha) \ra F(T_\alpha)$ is not projective. So the proof of Corollary~\ref{cor:quotient} does \dftxt{not} provide us a functorial way to find a projective diffeological vector space $V$ together with a linear subduction $V \ra F(T_\alpha)$. However, there is a linear subduction $F(\pi):F(\R) \ra F(T_\alpha)$ whose domain is a projective diffeological vector space.

\begin{thm}\label{thm:enoughprojectives}
The category $\DVect$ has enough projectives, that is, for any diffeological vector space $V$, there exists a projective diffeological vector space $P(V)$ together with a linear subduction $P(V) \ra V$.
\end{thm}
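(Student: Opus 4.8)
The plan is to realize $P(V)$ as a coproduct in $\DVect$ of free diffeological vector spaces on the domains of all plots of $V$; the evident structure map will be a linear subduction, and its domain will be projective by the results already established.

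First I would set up notation: let $\mathcal{P}$ denote the set of all plots of $V$, and for $p \in \mathcal{P}$ let $U_p$ be its domain, an open subset of some $\R^{n_p}$ and hence a smooth manifold in the sense of our conventions. By Corollary~\ref{Cor:freeonmfd=>proj} each $F(U_p)$ is projective, so by Proposition~\ref{prop:coproduct} and Remark~\ref{rem:projective}(\ref{directsumofprojectives}) the coproduct
\[
P(V) := \bigoplus_{p \in \mathcal{P}} F(U_p)
\]
exists in $\DVect$ and is again projective. (There is no set-theoretic difficulty: for each $n$ the open subsets of $\R^n$ form a set, and for each such $U$ the plots $U \to V$ form a set, so $\mathcal{P}$ is a set.) Next, each plot $p \colon U_p \to V$ is smooth, so by the universal property of the free diffeological vector space (Proposition~\ref{prop:free}) it induces a smooth linear map $\eta_p \colon F(U_p) \to V$ with $\eta_p \circ i_{U_p} = p$; by the universal property of the coproduct these glue to a smooth linear map $\eta \colon P(V) \to V$ restricting to $\eta_p$ on the $p$-th summand.

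It then remains to check that $\eta$ is a linear subduction. Surjectivity is immediate: each $v \in V$ lies in the image of some plot $p$ (for instance a constant one), hence $v \in \im(\eta_p) \subseteq \im(\eta)$. For the subduction property, which is the one genuinely load-bearing step (though still short), I would exploit that every plot of $V$ is literally one of the indices. Given an arbitrary plot $q \colon W \to V$, consider the composite $W \to F(W) \hookrightarrow P(V)$ in which the first map is $i_W$ and the second is the inclusion of the summand indexed by $q$ itself, so that $F(W) = F(U_q)$; this composite $\tilde q$ is smooth, hence a plot of $P(V)$, and $\eta \circ \tilde q = \eta_q \circ i_W = q$. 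Thus every plot of $V$ globally lifts through $\eta$, so $\eta$ is a subduction; being also surjective, it is a linear subduction, and $(P(V), \eta)$ is the pair required by the theorem.

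I do not anticipate a real obstacle: this is the standard ``resolve by all plots'' construction, and everything is forced once the right index set is chosen. The only points needing any care are the (harmless) set-theoretic remark above and keeping track of which summand a given plot lifts into. As a bonus, one can make $P$ into a functor by letting a smooth linear map $f \colon V \to V'$ act on index sets via $p \mapsto f \circ p$, with the maps $\eta$ then forming a natural transformation from $P$ to the identity functor; I would include this only if later parts of the paper need it.
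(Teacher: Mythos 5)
Your proposal is correct and is essentially the paper's own proof: the paper also takes $P(V)$ to be the coproduct in $\DVect$ of the $F(U)$ over all plots $U \to V$, notes that the induced map to $V$ is a subduction by construction (each plot lifts through its own summand), and cites Corollary~\ref{Cor:freeonmfd=>proj} together with Remark~\ref{rem:projective}(\ref{directsumofprojectives}) for projectivity. Your write-up just spells out the lifting step and the functoriality remark in more detail than the paper does.
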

\begin{proof}
Let $V$ be an arbitrary diffeological vector space. We construct $P(V)$ as the coproduct in $\DVect$ of all $F(U)$ indexed by all plots $U \ra V$. By the universal property of free diffeological vector spaces, there is a canonical smooth linear map $F(U) \ra V$. Therefore, there is a smooth linear map $P(V) \ra V$. By construction, it is easy to see that this map is a subduction. By Corollary~\ref{Cor:freeonmfd=>proj} and Remark~\ref{rem:projective}(\ref{directsumofprojectives}), we know that $P(V)$ is a projective diffeological vector space.
\end{proof}

Note that since the free functor $F:\Diff \ra \DVect$ is a left adjoint, $P(V)$ constructed in the proof of the above theorem is actually a free diffeological vector space.

Of course, given a diffeological vector space $V$, the projective diffeological vector space $P(V)$ constructed in the proof of the above theorem is functorial but huge. However, there is a natural transformation $P \ra 1$, and $f:V \ra W$ is a linear subduction if and only if $P(f):P(V) \ra P(W)$ is. Here is a non-functorial but much smaller construction, whose proof is similar to the proof of the above theorem:

\begin{prop}
Let $V$ be a diffeological vector space, and let $\{p_i:U_i \ra V\}$ be a generating set of the diffeology on $V$. Then $\sum \eta_i:\oplus F(U_i) \ra V$ is a linear subduction with $\oplus F(U_i)$ a projective diffeological vector space.
\end{prop}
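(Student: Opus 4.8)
The plan is to transcribe the proof of Theorem~\ref{thm:enoughprojectives}, the only genuinely new feature being that here the construction is built from a generating family rather than from all plots. First I would let, for each $i$, $\eta_i\colon F(U_i)\ra V$ be the smooth linear map induced by the plot $p_i$ through the universal property of Proposition~\ref{prop:free}, so that $\eta_i\circ i_{U_i}=p_i$, where $i_{U_i}\colon U_i\ra F(U_i)$ is the canonical smooth map. By the universal property of the coproduct in $\DVect$ (Remark~\ref{rem:naturalisomorphisms}) these assemble into a single smooth linear map $\sum\eta_i\colon\oplus F(U_i)\ra V$. Since each $U_i$ is an open subset of a Euclidean space, hence a smooth manifold, $F(U_i)$ is projective by Corollary~\ref{Cor:freeonmfd=>proj}, and therefore $\oplus F(U_i)$ is projective by Remark~\ref{rem:projective}(\ref{directsumofprojectives}).

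It then remains to verify that $\sum\eta_i$ is a linear subduction, i.e.\ that it is smooth, surjective, and presents $V$ with the pushforward diffeology. For the lifting of plots I would argue as follows: given a plot $q\colon W\ra V$ and a point $w\in W$, the hypothesis that $\{p_i\}$ generates the diffeology on $V$ provides an open neighborhood $W'$ of $w$ on which $q|_{W'}$ is either constant or of the form $p_i\circ g$ for some smooth $g\colon W'\ra U_i$. In the second case $q|_{W'}=(\sum\eta_i)\circ(j_i\circ i_{U_i}\circ g)$, where $j_i\colon F(U_i)\ra\oplus F(U_i)$ is the coproduct inclusion, so $q|_{W'}$ lifts smoothly; the constant case reduces to lifting a constant parametrization, which is possible once surjectivity is in hand. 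Together with the smoothness of $\sum\eta_i$, this yields that the diffeology on $V$ is the pushforward diffeology.

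The hard part will be the surjectivity of $\sum\eta_i$, equivalently that $V$ equals the linear span $S$ of $\bigcup_i p_i(U_i)$; a generating family need not have spanning image a priori, so one must use that $V$ is a diffeological vector \emph{space} and not merely a diffeological space. If $V=0$ this is trivial; otherwise not every $p_i$ can be locally constant, since a diffeology generated by locally constant plots is the discrete one, under which a nonzero vector space fails to be a diffeological vector space (scalar multiplication is not smooth). So I would fix some $p=p_{i_0}$ and a point $w_0$ admitting no neighborhood on which $p$ is constant. If there were $v\in V\setminus S$, then $u\mapsto p(u)+v$ would be a plot of $V$ by smoothness of addition, non-constant on every neighborhood of $w_0$, hence locally of the form $p_j\circ g$ by the generation hypothesis; then $p(w_0)+v\in p_j(U_j)\subseteq S$, and since $p(w_0)\in S$ this forces $v\in S$, a contradiction. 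Hence $S=V$, constant plots lift, and $\sum\eta_i$ is a linear subduction. Everything outside this surjectivity argument is a routine adaptation of the proof of Theorem~\ref{thm:enoughprojectives}.
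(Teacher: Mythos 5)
Your proof is correct and follows exactly the route the paper intends (the paper gives no details beyond saying the proof is similar to that of Theorem~\ref{thm:enoughprojectives}), with the surjectivity of $\sum\eta_i$ being the one genuinely new ingredient, which you identify and handle soundly. As a minor streamlining: for each $v\in V$ the scalar-multiplication plot $t\mapsto tv$ is non-constant on every neighborhood of $t=1$ unless $v=0$, so by the generation hypothesis every nonzero $v$ already lies in some $p_i(U_i)$; this gives surjectivity directly without the auxiliary translated plot $u\mapsto p(u)+v$.
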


Here is an immediate consequence: 

\begin{cor}\label{cor:projective}
Every projective diffeological vector space is a smooth direct summand of a coproduct of free diffeological vector spaces generated by open subsets of Euclidean spaces.
\end{cor}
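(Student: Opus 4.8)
The plan is to assemble this directly from Theorem~\ref{thm:enoughprojectives} (or from the Proposition immediately preceding this corollary) together with the characterization of projectivity in Remark~\ref{rem:projective}(\ref{projiffsplit}). Let $V$ be a projective diffeological vector space. First I would invoke Theorem~\ref{thm:enoughprojectives}: there is a projective diffeological vector space $P(V)$ and a linear subduction $\pi:P(V) \ra V$, and by construction $P(V)$ is the coproduct in $\DVect$ of the free diffeological vector spaces $F(U)$ indexed by all plots $U \ra V$. Since each such domain $U$ is an open subset of a Euclidean space, $P(V)$ is precisely a coproduct of free diffeological vector spaces generated by open subsets of Euclidean spaces. (One may equally well use the smaller, non-functorial construction $\oplus F(U_i) \ra V$ from the Proposition above, whose $U_i$ are again open subsets of Euclidean spaces.)

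Next, since $V$ is projective, I would apply Definition~\ref{def:projective} to the linear subduction $\pi:P(V) \ra V$ and the smooth linear map $\mathrm{id}_V:V \ra V$, obtaining a smooth linear map $q:V \ra P(V)$ with $\pi \circ q = \mathrm{id}_V$; this is exactly Remark~\ref{rem:projective}(\ref{projiffsplit}) for the subduction $\pi$. Feeding this section into Theorem~\ref{thm:splitses}, the short exact sequence $0 \ra \ker(\pi) \ra P(V) \ra V \ra 0$ in $\DVect$ satisfies condition~(2) of that theorem, hence splits smoothly, so $V$ is a smooth direct summand of $P(V)$ in the sense defined after Theorem~\ref{thm:splitses}. (Alternatively, Corollary~\ref{cor:decomposition}(2) yields directly a diffeological vector space $X$ with $P(V) \cong V \times X$ in $\DVect$.)

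As the argument is a routine combination of results already established, there is no genuine obstacle; the only point requiring explicit verification is that the index set of the coproduct defining $P(V)$ consists of open subsets of Euclidean spaces, which is immediate from the construction in Theorem~\ref{thm:enoughprojectives}.
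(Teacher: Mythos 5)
Your proposal is correct and follows essentially the same route the paper intends: take the linear subduction $\oplus F(U_i) \ra V$ (equivalently $P(V) \ra V$ from Theorem~\ref{thm:enoughprojectives}), whose domain is by construction a coproduct of free diffeological vector spaces on open subsets of Euclidean spaces, and use projectivity of $V$ via Remark~\ref{rem:projective}(\ref{projiffsplit}) and Theorem~\ref{thm:splitses} to split it smoothly. No gaps.
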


\begin{prop}\label{prop:perserveses}
A diffeological vector space $V$ is projective if and only if the functor $L^\infty(V,\blank):\DVect \ra \DVect$ preserves short exact sequences.
\end{prop}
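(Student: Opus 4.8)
The plan is to show each direction separately, using that $\DVect$ is quasi-abelian with strict epimorphisms the linear subductions and strict monomorphisms the linear inductions, together with the adjunction from the tensor-hom pair. First I would pin down what ``$L^\infty(V,\blank)$ preserves short exact sequences'' should mean here: given a short exact sequence $0 \ra A \ra W \ra B \ra 0$ in $\DVect$, the functor always sends it to a sequence
\[
\xymatrix{0 \ar[r] & L^\infty(V,A) \ar[r] & L^\infty(V,W) \ar[r]^-{p_*} & L^\infty(V,B) \ar[r] & 0}
\]
and left-exactness of $L^\infty(V,\blank)$ (i.e.\ exactness at $L^\infty(V,A)$ and $L^\infty(V,W)$) holds for every $V$ because $L^\infty(V,\blank)$ is a right adjoint and because an induction $i:A\ra W$ makes $i_*$ injective with image exactly the smooth linear maps $V\ra W$ landing in $A$ — this last point uses that the sub-diffeology on $A$ is the initial one, so a smooth map into $W$ with image in $A$ is automatically smooth into $A$. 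The only content is therefore exactness at $L^\infty(V,B)$, i.e.\ surjectivity \emph{and} the subduction/quotient-diffeology property of $p_*:L^\infty(V,W)\ra L^\infty(V,B)$. Strictly speaking ``preserves short exact sequences'' means the resulting sequence is again short exact in $\DVect$, so I must check $p_*$ is a linear subduction, not merely surjective.

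For the forward direction, assume $V$ is projective. Surjectivity of $p_*$ is immediate: given $g \in L^\infty(V,B)$, projectivity applied to the linear subduction $p:W\ra B$ lifts $g$ to $h:V\ra W$ with $p\circ h = g$, so $p_*(h)=g$. It remains to see $p_*$ is a subduction, i.e.\ that every plot $Q:U \ra L^\infty(V,B)$ lifts locally through $p_*$. Such a plot is adjoint to a smooth map $U \times V \ra B$; I want to lift this to a smooth map $U \times V \ra W$ over $p$. The clean way: consider the diffeological vector space $L^\infty(V,W)$ and the smooth linear map $p_*:L^\infty(V,W)\ra L^\infty(V,B)$. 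We already know $p_*$ is surjective; one shows it is actually a linear subduction by a pullback argument — form the pullback of $p:W\ra B$ along the evaluation-type data, using that linear subductions are stable under pullback in $\DVect$ (Remark~\ref{rem:projective}(\ref{projiffsplit})) and that $V$ projective means $p_*$ hits every element with enough room. Concretely, pull $p$ back along $Q$ to get a linear subduction over $U$ in the appropriate category; projectivity of $V$ (equivalently, that linear subductions into things built from $V$ split) produces the required local smooth section, hence a local lift of $Q$. So $p_*$ is a linear subduction and the sequence stays short exact.

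For the converse, assume $L^\infty(V,\blank)$ preserves short exact sequences, and let $f:W_1 \ra W_2$ be a linear subduction and $g:V\ra W_2$ a smooth linear map. Set $A = \ker f$ with its sub-diffeology; by Proposition~\ref{prop:iso}(1) and the definition of linear subduction, $0 \ra A \ra W_1 \xrightarrow{f} W_2 \ra 0$ is a short exact sequence in $\DVect$. Applying the hypothesis, $f_*:L^\infty(V,W_1)\ra L^\infty(V,W_2)$ is a linear subduction, in particular surjective, so there is $h \in L^\infty(V,W_1)$ with $f_*(h) = g$, i.e.\ $f\circ h = g$. That is exactly the lifting property in Definition~\ref{def:projective}, so $V$ is projective.

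\textbf{Main obstacle.} The routine half is the converse and the surjectivity statements; the real work is verifying, in the forward direction, that $p_*$ is not just surjective but a genuine linear subduction (equivalently carries the quotient diffeology). This is where one must argue at the level of plots and use the pullback-stability of linear subductions together with the universal property of $V$ as a projective object, rather than a one-line abstract-nonsense appeal; I expect this to be the step needing care, and it is essentially the same mechanism by which one proves Remark~\ref{rem:projective}(\ref{projiffsplit}).
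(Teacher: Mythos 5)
Your converse direction is fine and is exactly what the paper does (the paper dismisses it as ``clear''), and you are right that the whole content of the forward direction is showing that $p_*:L^\infty(V,W)\ra L^\infty(V,B)$ is a genuine linear subduction rather than merely surjective. But your argument for that step is a gap: ``form the pullback of $p$ along the evaluation-type data \dots\ projectivity of $V$ \dots\ produces the required local smooth section'' does not typecheck and, when unwound, only reproduces the surjectivity you already have. Pulling $p$ back along the adjoint $U\times V\ra B$ gives a diffeological principal bundle over $U\times V$, and nothing in the definition of projectivity (which lifts a single smooth \emph{linear} map $V\ra W_2$, not a $U$-parametrized family) hands you a section of that bundle, let alone one linear in the $V$-variable. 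Pullback-stability of linear subductions is the mechanism behind Remark~\ref{rem:projective}(\ref{projiffsplit}), i.e.\ behind surjectivity of $p_*$; it does not by itself address plots of $L^\infty(V,B)$.

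The missing ingredients are the ones the paper actually uses. First, reduce to a manageable $V$: by Theorem~\ref{thm:enoughprojectives}, $V$ is a retract of $P(V)=\oplus_i F(U_i)$ with each $U_i$ an open subset of a Euclidean space, and by Remark~\ref{rem:naturalisomorphisms} one has $L^\infty(\oplus_i F(U_i),\blank)\cong\prod_i C^\infty(U_i,\blank)$; since subductions are closed under retracts and products, it suffices to show $C^\infty(U,\blank)$ preserves linear subductions. Second, and crucially, a plot $U'\ra C^\infty(U,B)$ is a plot $U'\times U\ra B$, and every plot of $B$ lifts \emph{globally} through a linear subduction $W\ra B$ --- this is Remark~\ref{rem:globallift}, which rests on Corollary~\ref{Cor:freeonmfd=>proj} (projectivity of $F(M)$ for a manifold $M$, proved with a partition of unity). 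Equivalently, in your own setup one can use the isomorphism $C^\infty(U,L^\infty(V,B))\cong L^\infty(V,C^\infty(U,B))$ and apply projectivity of $V$ to the linear subduction $C^\infty(U,W)\ra C^\infty(U,B)$ --- but that again presupposes the global-lifting fact. Without identifying this input, the forward direction is not proved.
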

\begin{proof}
($\Leftarrow$) This is clear.

($\Rightarrow$) Since $\DVect$ has enough projectives, $V$ is a retract in $\DVect$ of $P(V)$, which is introduced in the proof of Theorem~\ref{thm:enoughprojectives}. Since the functor $L^\infty(W,\blank)$ is a left adjoint for any diffeological vector space $W$ and subductions are closed under retracts, we are left to show that $L^\infty(P(V),\blank)$ preserves subductions. By Remarks~\ref{rem:naturalisomorphisms} and~\ref{rem:globallift}, we are left to show that for any $U$ open subset of a Euclidean space, the functor $C^\infty(U,\blank):\DVect \ra \DVect$ preserves linear subductions. This then follows from Corollary~\ref{Cor:freeonmfd=>proj}.
\end{proof}

As an easy corollary, we have:

\begin{cor}
If $V$ and $W$ are projective diffeological vector spaces, then so is $V \otimes W$.
\end{cor}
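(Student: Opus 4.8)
The plan is to deduce this from Proposition~\ref{prop:perserveses} by means of the tensor--hom adjunction. First I would record the natural isomorphism
\[
L^\infty(V \otimes W, \blank) \;\cong\; L^\infty\bigl(V, L^\infty(W, \blank)\bigr)
\]
of functors $\DVect \ra \DVect$. This is the curried form of the adjoint pair $\blank \otimes W \dashv L^\infty(W,\blank)$: by the construction of $V \otimes W$ as a quotient of $F(V \times W)$, a smooth linear map $V \otimes W \ra A$ is the same as a smooth bilinear map $V \times W \ra A$, which in turn is the same as a smooth linear map $V \ra L^\infty(W,A)$, and these identifications are natural in $A$ (and, once one unwinds the functional diffeologies, diffeomorphisms).

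Next, since $W$ is projective, Proposition~\ref{prop:perserveses} says that $L^\infty(W,\blank)$ carries short exact sequences in $\DVect$ to short exact sequences in $\DVect$; and since $V$ is projective, the same proposition says $L^\infty(V,\blank)$ does too. Hence the composite $L^\infty\bigl(V, L^\infty(W,\blank)\bigr)$ preserves short exact sequences, and therefore so does the naturally isomorphic functor $L^\infty(V \otimes W,\blank)$, since being a short exact sequence in $\DVect$ (exactness in $\Vect$, the first map an induction, the second a subduction) is invariant under isomorphism of diagrams in $\DVect$. Applying the converse direction of Proposition~\ref{prop:perserveses} once more, we conclude that $V \otimes W$ is projective.

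The one point needing care, and the step I expect to be the main obstacle, is checking that the tensor--hom adjunction upgrades from a natural bijection of hom-sets to a natural isomorphism \emph{in} $\DVect$, so that the isomorphism above is one of functors valued in $\DVect$ rather than merely in $\Set$; this is the diffeological analogue of the exponential law and is handled just as for $C^\infty(X \times Y, Z) \cong C^\infty(X, C^\infty(Y,Z))$. Should one wish to bypass it, there is a purely structural alternative: by Corollary~\ref{cor:projective} write $V$ and $W$ as smooth retracts of coproducts $\oplus_i F(U_i)$ and $\oplus_j F(U_j')$ of free diffeological vector spaces on open subsets of Euclidean spaces; functoriality of $\otimes$ in each variable then exhibits $V \otimes W$ as a smooth retract of $(\oplus_i F(U_i)) \otimes (\oplus_j F(U_j')) \cong \oplus_{i,j}\bigl(F(U_i) \otimes F(U_j')\bigr) \cong \oplus_{i,j} F(U_i \times U_j')$, using distributivity of $\otimes$ over coproducts and Proposition~\ref{prop:productvstensor}; each $F(U_i \times U_j')$ is projective by Corollary~\ref{Cor:freeonmfd=>proj} (as $U_i \times U_j'$ is an open subset of a Euclidean space), the coproduct is projective by Remark~\ref{rem:projective}(\ref{directsumofprojectives}), and projectivity passes to the retract by Remark~\ref{rem:projective}.
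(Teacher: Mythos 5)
Your first argument is exactly the intended one: the paper states this result as an immediate consequence of Proposition~\ref{prop:perserveses}, via the $\DVect$-enriched form of the adjunction $\blank \otimes W \dashv L^\infty(W,\blank)$, and you have correctly isolated the one point that needs checking (that the natural bijection $L^\infty(V\otimes W,A)\cong L^\infty(V,L^\infty(W,A))$ is an isomorphism in $\DVect$, natural in $A$). Your fallback argument, writing $V$ and $W$ as retracts of coproducts of $F(U)$'s via Corollary~\ref{cor:projective} and combining Proposition~\ref{prop:productvstensor} with Remark~\ref{rem:projective}, is also complete and has the merit of bypassing the enriched adjunction entirely.
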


\section{Homological algebra for diffeological vector spaces}\label{s:ha}

In this section, we develop (relative) homological algebra for diffeological vector spaces based on the results we get in the previous sections about linear subductions and projective diffeological vector spaces. We show that every diffeological vector space has a diffeological projective resolution, which is unique up to diffeological chain homotopy equivalence. Shanuel's lemma, Horseshoe lemma and (Short) Five lemma still hold in this setting. From a short exact sequence of diffeological (co)chain complexes, we get a long sequence in $\DVect$ which is also exact in $\Vect$. Then we define ext diffeological vector space $\Ext^n(V,W)$ for any diffeological vector spaces $V,W$ and any $n \in \N$. For any short exact sequence of diffeological vector spaces in the first or second variable, we get a long sequence of ext diffeological vector spaces which is exact in $\Vect$. Finally we show that $\Ext^1(V,W)$ classifies all short exact sequences in $\DVect$ of the form $0 \ra W \ra A \ra V \ra 0$ up to equivalence.

\bigskip

As usual, we define the category $\DCh$ of diffeological chain complexes to be the full subcategory of the functor category $\DVect^{\Z}$ consisting of objects in which the composition of consecutive arrows are $0$, where $\Z$ is viewed as a poset of integers with the opposite ordering. The morphisms in $\DCh$ are called \dftxt{diffeological chain maps}. For any $n \in \Z$, there exists a functor $H_n:\DCh \ra \DVect$ defined by $H_n(\textbf{V})=\ker(d_n)/\im(d_{n+1})$, where 
\[
\xymatrix{V_{n-1} & V_n \ar[l]_-{d_n} & V_{n+1} \ar[l]_-{d_{n+1}}}
\]
is a piece in the diffeological chain complex $\bf{V}$, both $\ker(d_n)$ and $\im(d_{n+1})$ are equipped with the sub-diffeologies of $V_n$, and $H_n(\bf{V})$ is equipped with the quotient diffeology. We call $H_n(\bf{V})$ \dftxt{the $n^{th}$} homology of $\bf{V}$. Two diffeological chain maps $\textbf{f}, \textbf{g}:\textbf{V} \ra \textbf{W}$ are called \dftxt{diffeologically chain homotopic} if there are smooth linear maps $h_n:V_n \ra W_{n+1}$ for all $n \in \Z$ such that $f_n-g_n=h_{n-1} \circ d_n^{\textbf{V}}+d_{n+1}^{\textbf{W}} \circ h_n$ for each $n \in \Z$. This gives an equivalence relation on $\DCh(\textbf{V},\textbf{W})$, called the \dftxt{diffeological chain homotopy equivalence}, which is compatible with compositions in $\DCh$. Write $\hDCh$ for the quotient category. Then the homology functors $H_n$ factors through the projection $\DCh \ra \hDCh$. A diffeological chain map $\bf{f}:\bf{V} \ra \bf{W}$ is called a \dftxt{homology isomorphism} if for each $n \in \Z$, $H_n(\bf{f})$ is an isomorphism in $\DVect$. A diffeological chain complex $\textbf{V}$ is called \dftxt{exact at $n^{th}$ spot} if the induced map $0 \ra V_{n-1}/\ker(d_{n-1}) \ra V_n \ra \im(d_n) \ra 0$ with $V_{n-1}/\ker(d_{n-1})$ equipped with the quotient diffeology of $V_{n-1}$ and $\im(d_n)$ equipped with the sub-diffeology of $V_{n+1}$ is a short exact sequence in $\DVect$. A diffeological chain complex is called \dftxt{exact} if it is exact at every spot. A \dftxt{diffeological projective resolution} of a diffeological vector space $V$ is an exact diffeological chain complex $\bf{V}$ such that $V_{-1}=V$, $V_n=0$ for every $n<-1$, and $V_n$ is projective for every $n \geq 0$. In this case, we write $\textbf{P}(V)$ for the diffeological chain complex with $\textbf{P}(V)_n=V_n$ for each $n \geq 0$ and $\textbf{P}(V)_n=0$ for each $n<0$. Then the diffeological chain map from $\textbf{P}(V)$ to the diffeological chain complex with $V$ concentrated at the $0^{th}$ position is a homology isomorphism. By abuse of notation, we also call this diffeological chain map a \dftxt{diffeological projective resolution}, and denote it by $\textbf{P}(V) \ra V$.

Similarly, one can define the category $\DCh^o$ of diffeological cochain complexes and cohomology functors $H^n:\DCh^o \ra \DVect$.

As the proof of~\cite[Lemma~2.2.5]{We}, one can show that there is a diffeological projective resolution for every diffeological vector space. As the proof of~\cite[Comparison Theorem~2.2.6]{We}, for any diffeological projective resolution $\textbf{V}$ of $V$ (or write as $\textbf{P}(V) \ra V$), any exact diffeological chain complex $\textbf{W}$ with $W_i=0$ for all $i \leq -2$, and any smooth linear map $f:V \ra W_{-1}$, there exists a diffeological chain map $\textbf{V} \ra \textbf{W}$ extending $f$, and the corresponding diffeological chain map $\textbf{P}(V) \ra \bar{\textbf{W}}$ is unique up to diffeological chain homotopy equivalence, where $\bar{\textbf{W}}$ is derived from $\textbf{W}$ by replacing $W_{-1}$ by $0$. In particular, for any two diffeological projective resolutions $\textbf{V}$ and $\textbf{V}'$ of $V$, there is an isomorphism between $\textbf{P}(V)$ and $\textbf{P}'(V)$ in $\hDCh$ from the corresponding diffeological chain maps between $\textbf{V}$ and $\textbf{V}'$ extending $1_V:V \ra V$.

\begin{lem}[Schanuel]\label{lem:Schanuel}
Given short exact sequences of diffeological vector spaces 
\[
\xymatrix{0 \ar[r] & K \ar[r]^i & P \ar[r]^\pi & M \ar[r] & 0}
\]
and
\[
\xymatrix{0 \ar[r] & K' \ar[r]^{i'} & P' \ar[r]^{\pi'} & M \ar[r] & 0}
\]
with both $P$ and $P'$ projective, there is an isomorphism $K \oplus P' \cong K' \oplus P$ in $\DVect$.
\end{lem}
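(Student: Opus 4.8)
The plan is to mimic the classical proof of Schanuel's lemma, being careful that each algebraic map constructed along the way is in fact smooth, using that $P$ and $P'$ are projective. First I would form the pullback (fibred product) $Q=\{(p,p') \in P \times P' \mid \pi(p)=\pi'(p')\}$, equipped with the sub-diffeology of $P \times P'$; this is a diffeological vector space by Examples~\ref{ex:sub} and~\ref{ex:product}. The two projections $\mathrm{pr}:Q \ra P$ and $\mathrm{pr}':Q \ra P'$ are smooth linear, and the standard diagram chase shows that in $\Vect$ we have short exact sequences
\[
\xymatrix{0 \ar[r] & K' \ar[r] & Q \ar[r]^{\mathrm{pr}} & P \ar[r] & 0}
\qquad\text{and}\qquad
\xymatrix{0 \ar[r] & K \ar[r] & Q \ar[r]^{\mathrm{pr}'} & P' \ar[r] & 0,}
\]
where $K' \ra Q$ is $k' \mapsto (0, i'(k'))$ and $K \ra Q$ is $k \mapsto (i(k), 0)$.

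The key point is that these are short exact sequences in $\DVect$, not merely in $\Vect$. The inclusions of $K$ and $K'$ are linear inductions because $i$ and $i'$ are and because $Q$ carries the sub-diffeology of $P \times P'$ (a plot of $Q$ landing in the image of $K \ra Q$ has smooth second coordinate $0$ and first coordinate lifting through the induction $i$). For the surjections: since $\pi'$ is a linear subduction and $Q$ is the pullback of $\pi$ along $\pi'$, the map $\mathrm{pr}:Q \ra P$ is again a linear subduction — linear subductions are stable under pullback in $\DVect$ (this is exactly the fact invoked in Remark~\ref{rem:projective}(\ref{projiffsplit})). Concretely, given a plot $U \ra P$, one postcomposes with $\pi$ to get a plot of $M$, lifts it locally through the subduction $\pi'$ to a plot of $P'$, and pairs the two to get a local lift to $Q$. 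Symmetrically $\mathrm{pr}':Q \ra P'$ is a linear subduction.

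Now both short exact sequences displayed above split smoothly: the first because $P$ is projective (apply Remark~\ref{rem:projective}(\ref{projiffsplit}), or Definition~\ref{def:projective} directly to lift $\mathrm{id}_P$ through $\mathrm{pr}$), the second because $P'$ is projective. By Theorem~\ref{thm:splitses}, the first split gives $Q \cong K' \oplus P$ in $\DVect$ and the second gives $Q \cong K \oplus P'$ in $\DVect$. Composing these two isomorphisms yields $K \oplus P' \cong K' \oplus P$ in $\DVect$, as desired.

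The main obstacle — really the only non-formal point — is verifying that the pullback $Q$, with its sub-diffeology, genuinely sits in two short exact sequences of $\DVect$; that is, checking that $\mathrm{pr}$ and $\mathrm{pr}'$ are \emph{subductions} (not just surjections) and that $K,K' \hookrightarrow Q$ are \emph{inductions}. Both reduce to the stability of linear subductions under pullback, which is available from the material preceding the lemma; once that is in hand, everything else is the classical diagram chase plus an application of Theorem~\ref{thm:splitses}.
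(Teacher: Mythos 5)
Your proof is correct, but it takes a genuinely different route from the paper's. The paper follows Rotman's argument: using projectivity of $P$ and $P'$ it builds the three-row comparison diagram with maps $\beta:P\ra P'$, $\gamma:P'\ra P$ and induced maps $\alpha:K\ra K'$, $\rho:K'\ra K$, and then shows that the single sequence $0 \ra K \xrightarrow{(i,\alpha)} P\oplus K' \xrightarrow{\beta-i'} P' \ra 0$ is short exact in $\DVect$; the only delicate point there is that $\beta-i'$ is a subduction, which the paper checks by exhibiting an explicit smooth local (in fact global) lifting $p \mapsto (\gamma\circ p,\delta'\circ p)$ built from a homotopy-type identity $\beta\circ\gamma-1_{P'}=i'\delta'$. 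You instead use the pullback $Q=P\times_M P'$ and exhibit \emph{two} short exact sequences $0\ra K'\ra Q\ra P\ra 0$ and $0\ra K\ra Q\ra P'\ra 0$, each of which splits smoothly by Remark~\ref{rem:projective} and Theorem~\ref{thm:splitses}. Your version pushes all the diffeological content into one reusable fact --- stability of linear subductions under pullback (with the sub-diffeology on the fibred product being the categorical pullback in $\DVect$) --- which the paper itself asserts and uses in Remark~\ref{rem:projective}(\ref{projiffsplit}), and your verification of it via local lifts through $\pi'$ is the right one; the induction claims for $K,K'\hookrightarrow Q$ also check out since $i,i'$ are inductions. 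The trade-off: the paper's route produces one canonical short exact sequence relating $K$, $P\oplus K'$ and $P'$ (a slightly stronger statement that is sometimes useful on its own), while your route is more symmetric, avoids the explicit chase with $\beta,\gamma,\delta'$, and only needs each projective to split the subduction mapping onto it. Both are complete proofs.
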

\begin{proof}
By assumption, we have the following commutative diagrams in $\DVect$
\[
\xymatrix{0 \ar[r] & K \ar[r]^i \ar[d]^\alpha & P \ar[r]^\pi \ar[d]^\beta & M \ar[r] \ar[d]^{1_M} & 0 \\ 0 \ar[r] & K' \ar[r]^{i'} \ar[d]^\rho & P' \ar[r]^{\pi'} \ar[d]^\gamma & M \ar[r] \ar[d]^{1_M} & 0 \\ 0 \ar[r] & K \ar[r]^i & P \ar[r]^\pi & M \ar[r] & 0.}
\]
It is shown in the proof of~\cite[Proposition~3.12]{R} that 
\[
\xymatrix{0 \ar[r] & K \ar[r]^-{(i,\alpha)} & P \oplus K' \ar[r]^-{\beta-i'} & P' \ar[r] & 0}
\]
is a short exact sequence in $\Vect$, so we are left to show that this is actually a short exact sequence in $\DVect$. $(i,\alpha)$ is an induction since $i$ is. To show that $(\beta-i')$ is a subduction, note that there exists a smooth linear map $\delta':P' \ra K'$ such that $\beta \circ \gamma - 1_{P'}=i' \delta'$. For any plot $p:U \ra P'$, the map $(\gamma \circ p,\delta' \circ p):U \ra P \oplus K'$ is smooth, and we have $p=(\beta-i') \circ (\gamma \circ p,\delta' \circ p)$.
\end{proof}

\begin{lem}[Horseshoe]\label{lem:Horseshoe}
Let 
\[
\xymatrix{&&& 0 \ar[d] \\ \cdots \ar[r] & P_1' \ar[r]^{d_1'} & P_0' \ar[r]^{\epsilon'} & A' \ar[d]^{i_A} \ar[r] & 0 \\ &&& A \ar[d]^{\pi_A} \\ \cdots \ar[r] & P_1'' \ar[r]^{d_1''} & P_0'' \ar[r]^{\epsilon''} & A'' \ar[r] \ar[d] & 0 \\ &&& 0}
\]
be a diagram in $\DVect$ with the two horizontal lines diffeological projective resolutions of $A'$ and $A''$ respectively, and the vertical line a short exact sequence of diffeological vector spaces. Then there exists a diffeological projective resolution of $A$ in the middle horizontal line making the following diagram commutative and all vertical lines short exact sequences in $\DVect$
\[
\xymatrix{& 0 \ar[d] & 0 \ar[d] & 0 \ar[d] \\ \cdots \ar[r] & P_1' \ar[d]^{i_1} \ar[r]^{d_1'} & P_0' \ar[d]^{i_0} \ar[r]^{\epsilon'} & A' \ar[d]^{i_A} \ar[r] & 0 \\ \cdots \ar[r] & P_1' \oplus P_1''  \ar[d]^{\pi_1} \ar[r]^{d_1} & P_0' \oplus P_0'' \ar[d]^{\pi_0} \ar[r]^{\epsilon} & A \ar[d]^{\pi_A} \ar[r] & 0 \\ \cdots \ar[r] & P_1'' \ar[r]^{d_1''} \ar[d] & P_0'' \ar[d] \ar[r]^{\epsilon''} & A'' \ar[r] \ar[d] & 0 \\ & 0 & 0 & 0}
\]
\end{lem}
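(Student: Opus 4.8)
The plan is to imitate the classical proof of the Horseshoe lemma (cf.\ \cite[Lemma~2.2.8]{We}), inserting the diffeological bookkeeping at each stage, and to package everything as an induction whose only substantive content is a single ``one-step'' construction. Throughout, put $P_n := P'_n\oplus P''_n$, which is projective by Remark~\ref{rem:projective}(\ref{directsumofprojectives}), and let $i_n\colon P'_n\to P_n$ and $\pi_n\colon P_n\to P''_n$ be the canonical inclusion and projection; by Theorem~\ref{thm:splitses}(3) each vertical line $0\to P'_n\to P_n\to P''_n\to 0$ is a (split) short exact sequence in $\DVect$, so only the horizontal differentials $d_n$ remain to be constructed.

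The one-step lemma I would isolate is the following: given a short exact sequence $0\to A'\to A\to A''\to 0$ in $\DVect$ (call the maps $i_A$ and $\pi_A$) and linear subductions $\epsilon'\colon P'_0\to A'$, $\epsilon''\colon P''_0\to A''$ out of projectives, there is a linear subduction $\epsilon\colon P_0\to A$ with $\epsilon\circ i_0=i_A\circ\epsilon'$ and $\pi_A\circ\epsilon=\epsilon''\circ\pi_0$, and such that $0\to\ker\epsilon'\to\ker\epsilon\to\ker\epsilon''\to 0$ (with the maps induced by $i_0,\pi_0$) is again short exact in $\DVect$. To build $\epsilon$: choose, using projectivity of $P''_0$ and the fact that $\pi_A$ is a linear subduction, a smooth linear $s\colon P''_0\to A$ with $\pi_A\circ s=\epsilon''$, and set $\epsilon(p',p'')=i_A(\epsilon'(p'))+s(p'')$. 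The commutativities are immediate, and surjectivity of $\epsilon$ as a linear map is the classical diagram chase. That $\epsilon$ is a \emph{subduction} is the heart of the matter: for a plot $p\colon U\to A$, lift $\pi_A\circ p$ along $\epsilon''$ to a plot $q''\colon U\to P''_0$ (Remark~\ref{rem:globallift}); then $\pi_A\circ(p-s\circ q'')=0$, so $p-s\circ q''$ corestricts to a plot of $A'$ because the linear induction $i_A$ is a diffeomorphism onto its image; lifting that plot along $\epsilon'$ to $q'\colon U\to P'_0$ gives $\epsilon\circ(q',q'')=p$. The kernel sequence is treated identically: the left map is a linear induction since $i_0$ is, exactness in $\Vect$ is classical, and $\ker\epsilon\to\ker\epsilon''$ is a linear subduction by the same plot argument (a plot $q''$ of $\ker\epsilon''$ has $s\circ q''$ landing in $i_A(A')$, giving a lift of the form $(-r',q'')$).

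Granting the one-step lemma, the induction is the usual one. Apply it to $0\to A'\to A\to A''\to 0$ to get $d_0:=\epsilon\colon P_0\to A$ and a short exact sequence $0\to K'_0\to K_0\to K''_0\to 0$ of first syzygies, where $K'_0=\ker\epsilon'$, etc. Exactness of the given resolutions at the $0$th spot means $d'_1$ factors as a linear subduction $P'_1\to K'_0$ followed by the linear induction $K'_0\hookrightarrow P'_0$ (likewise for $d''_1$, using Proposition~\ref{prop:iso}(1)), so applying the one-step lemma to $0\to K'_0\to K_0\to K''_0\to 0$ with $P'_1,P''_1$ produces a linear subduction $P_1\to K_0$; define $d_1$ as its composite with $K_0\hookrightarrow P_0$. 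Iterating gives all $d_n$. Commutativity of every square follows by unwinding the definitions (each induced inclusion of syzygies restricts the relevant $i_0$, and similarly for the projections); the middle row is a chain complex since $\im d_n\subseteq K_{n-1}=\ker d_{n-1}$; and it is exact at every spot because, by construction, $d_n$ is a linear subduction onto $K_{n-1}$ composed with the linear induction $K_{n-1}\hookrightarrow P_{n-1}$, so $0\to P_n/\ker d_n\to P_{n-1}\to\im d_{n-1}\to 0$ is short exact in $\DVect$ (Proposition~\ref{prop:iso}(1) identifying $P_n/\ker d_n\cong K_{n-1}$). Since every $P_n$ is projective, the middle row is a diffeological projective resolution of $A$.

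The algebraic skeleton being entirely standard, the main obstacle is precisely the diffeological upgrading: verifying that $\epsilon$ and every $d_n$ --- and the maps they induce on kernels --- are genuine linear subductions, not merely smooth surjections. This is where one must invoke that linear subductions admit global (hence local) lifts of plots (Remark~\ref{rem:globallift}) and that linear inductions are diffeomorphisms onto their sub-diffeological images, so that a plot of $A$ annihilated by $\pi_A$ can be regarded as a plot of $A'$.
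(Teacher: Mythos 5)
Your proposal is correct and follows essentially the same route as the paper: define $\epsilon = i_A\circ\epsilon' + s$ using projectivity of $P_0''$ to lift $\epsilon''$ through the linear subduction $\pi_A$, defer the purely algebraic exactness to the classical Horseshoe argument, verify that $\epsilon$ and the induced map on kernels are linear subductions by lifting plots (along $\epsilon''$, then correcting into $i_A(A')$ and lifting along $\epsilon'$), and iterate on the syzygy short exact sequence. The paper states the plot-lifting step more tersely as a single local lifting of compatible pairs, whereas you spell it out with global lifts via Remark~\ref{rem:globallift}; the content is the same.
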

\begin{proof}
Since $\pi_A$ is a linear subduction and $P_0''$ is projective, there exists a smooth linear map $f:P_0'' \ra A$ such that $\pi_A \circ f=\epsilon''$. Define $\epsilon := i_A \circ \epsilon' + f$. The inductive proof of~\cite[Horseshoe Lemma~2.2.8]{We} showed that
\[
\xymatrix{& 0 \ar[d] & 0 \ar[d] & 0 \ar[d] \\ 0 \ar[r] & \ker(\epsilon') \ar[r] \ar[d] & P_0' \ar[r]^{\epsilon'} \ar[d] & A' \ar[r] \ar[d]^{i_A} & 0 \\ 0 \ar[r] & \ker(\epsilon) \ar[r] \ar[d] & P_0' \oplus P_0'' \ar[r]^{\epsilon} \ar[d] & A \ar[r] \ar[d]^{\pi_A} & 0 \\ 0 \ar[r] & \ker(\epsilon'') \ar[r] \ar[d] & P_0'' \ar[r]^{\epsilon''} \ar[d] & A'' \ar[r] \ar[d] & 0 \\ & 0 & 0 & 0}
\] 
is a commutative diagram with all vertical and horizontal lines short exact sequences in $\Vect$, so we are left to show that $\epsilon:P_0 :=P_0' \oplus P_0'' \ra A$ is a subduction and $0 \ra \ker(\epsilon') \ra \ker(\epsilon) \ra \ker(\epsilon'') \ra 0$ is a short exact sequence of diffeological vector spaces. Both of these follow from the fact that given plots $p:U \ra P_0''$ and $q:U \ra A$ such that $\epsilon'' \circ p=\pi_A \circ q$, \dftxt{locally} there exists a plot $r:U \ra P_0$ such that $\epsilon \circ r=q$ and $\pi_0 \circ r=p$. Since finite products and coproducts coincide in $\DVect$, this fact follows from the assumptions that $P_0'$ is a diffeological vector space, $\epsilon'$ is a subduction, and $0 \ra A' \ra A \ra A'' \ra 0$ is a short exact sequence in $\DVect$.
\end{proof}

\begin{thm}\label{sestoles}
Let $\textbf{f}:\textbf{A} \ra \textbf{B}$ and $\textbf{g}:\textbf{B} \ra \textbf{C}$ be diffeological chain maps between diffeological chain complexes, such that for each $n$, 
\[
\xymatrix{0 \ar[r] & A_n \ar[r]^{f_n} & B_n \ar[r]^{g_n} & C_n \ar[r] & 0}
\]
is a short exact sequence in $\DVect$. Then we have the following sequence in $\DVect$ which is exact in $\Vect$:
\[
\xymatrix{\cdots \ar[r] & H_n(\textbf{A}) \ar[r]^{f_*} & H_n(\textbf{B}) \ar[r]^{g_*} & H_n(\textbf{C}) \ar@{->} `r/8pt[d] `/10pt[l] `^dl[ll]|{\delta_n} `^r/3pt[dll] [dll] \\ & H_{n-1}(\textbf{A}) \ar[r]^{f_*} & H_{n-1}(\textbf{B}) \ar[r]^{g_*} &  \cdots}
\]
\end{thm}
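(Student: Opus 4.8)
The plan is to reduce the statement to the classical snake lemma and then check that the handful of maps it produces are morphisms in $\DVect$. On underlying vector spaces this is exactly the long exact homology sequence attached to a short exact sequence of chain complexes, so its exactness in $\Vect$ is~\cite[Theorem~1.3.1]{We}; it remains only to see that $f_*$, $g_*$ and the connecting maps $\delta_n$ are smooth linear.

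For $f_*$ and $g_*$ this is immediate. Writing $Z_n(\textbf{A}):=\ker(d_n^{\textbf{A}})$, and similarly for $\textbf{B}$ and $\textbf{C}$, for the cycle subspaces (equipped with the sub-diffeologies of $A_n$, $B_n$, $C_n$), the chain maps $f_n$ and $g_n$ restrict to smooth linear maps on cycles and then descend to the homology quotients, which carry the quotient diffeologies; a smooth linear map of diffeological vector spaces that carries the relevant cycle subspaces into one another automatically induces a smooth linear map on the corresponding quotients.

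The real content is that $\delta_n:H_n(\textbf{C}) \ra H_{n-1}(\textbf{A})$ is smooth; its linearity is part of the classical statement. Fix a plot $p:U \ra H_n(\textbf{C})$; since plots form a sheaf it suffices to show that $\delta_n \circ p$ is a plot locally on $U$. The quotient map $Z_n(\textbf{C}) \ra H_n(\textbf{C})$ is a subduction, so locally on $U$ we may lift $p$ to a plot $\tilde{p}:U \ra Z_n(\textbf{C}) \subseteq C_n$. Because $g_n$ is a linear subduction, Remark~\ref{rem:globallift} gives a smooth map $q:U \ra B_n$ with $g_n \circ q=\tilde{p}$. Then $g_{n-1} \circ d_n^{\textbf{B}} \circ q=d_n^{\textbf{C}} \circ \tilde{p}=0$, so $d_n^{\textbf{B}} \circ q$ takes values in $\ker(g_{n-1})=\im(f_{n-1})$; since $f_{n-1}$ is a linear induction, there is a (necessarily unique, and smooth) map $s:U \ra A_{n-1}$ with $f_{n-1} \circ s=d_n^{\textbf{B}} \circ q$. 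Applying $f_{n-2}$ to $d_{n-1}^{\textbf{A}} \circ s$ gives $d_{n-1}^{\textbf{B}} \circ d_n^{\textbf{B}} \circ q=0$, and $f_{n-2}$ is injective, so $s$ is in fact a plot of $Z_{n-1}(\textbf{A})$; composing it with the quotient map $Z_{n-1}(\textbf{A}) \ra H_{n-1}(\textbf{A})$ yields a plot which, by the standard description of the connecting homomorphism, coincides with $\delta_n \circ p$ on this neighborhood. Hence $\delta_n$ is smooth, and the displayed sequence is a sequence in $\DVect$.

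I expect the main obstacle to lie not in any genuinely new idea but in arranging the lifts in the right categories: the choice of $q$ relies crucially on $g_n$ being a \emph{linear} subduction, since a general subduction admits only local lifts of plots and would force a further refinement of the cover, whereas Remark~\ref{rem:globallift} (built on Corollary~\ref{Cor:freeonmfd=>proj}) supplies a lift over all of $U$; and the step producing the smooth map $s$ is precisely the defining property of the linear induction $f_{n-1}$. Everything else — well-definedness and linearity of $\delta_n$, and the identities $\im=\ker$ at each spot of the long sequence — is the standard diagram chase, which I would simply cite from~\cite{We}.
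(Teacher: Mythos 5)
Your proposal is correct and follows essentially the same route as the paper: reduce exactness to the classical statement in $\Vect$, note $f_*$ and $g_*$ are smooth by functoriality, and verify smoothness of $\delta_n$ by lifting a plot of $H_n(\textbf{C})$ through the quotient subduction, then through the linear subduction $g_n$ (globally, via Remark~\ref{rem:globallift}), and finally into $A_{n-1}$ using that $f_{n-1}$ is a linear induction. The only cosmetic difference is that you lift locally through the quotient map and invoke the sheaf axiom, whereas the paper lifts globally there as well; both work.
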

\begin{proof}
Since every short exact sequence in $\DVect$ is also a short exact sequence in $\Vect$, we have this long exact sequence in $\Vect$. By functoriality of homology functors, we know that all $f_*$'s and $g_*$'s are smooth. So we are left to show that all connecting linear maps $\delta$ are smooth. Recall that $\delta_n:H_n(\textbf{C}) \ra H_{n-1}(\textbf{A})$ is defined as follows:
\[ 
\xymatrix{0 \ar[r] & A_n \ar[r]^{f_n} \ar[d]^{d_n^A} & B_n \ar[r]^{g_n} \ar[d]^{d_n^B} & C_n \ar[r] \ar[d]^{d_n^C} & 0 \\ 0 \ar[r] & A_{n-1} \ar[r]_{f_{n-1}} & B_{n-1} \ar[r]_{g_{n-1}} & C_{n-1} \ar[r] & 0}
\]
Pick a representative $c$ for $[c] \in \ker(d_n^C)/\im (d_{n+1}^C)=H_n(\textbf{C})$. Since $g_n$ is surjective, we can pick $b \in B_n$ such that $g_n(b)=c$. Then there exists $a \in \ker(d_{n-1}^A) \subseteq A_{n-1}$ such that $f_{n-1}(a)=d_n^B(b)$. The map $\delta_n$ is defined by $\delta_n([c])=[a]$, which is independent of the choice of representative $c$ of $[c]$ and lift $b \in B_n$. Any plot $p:U \ra H_n(\textbf{C})$ globally lifts to a plot $q:U \ra \ker(d_n^C)$, since the quotient map $\ker(d_n^C) \ra H_n(\textbf{C})$ is a linear subduction. If we write $i:\ker(d_n^C) \ra C_n$ for the inclusion map, then the plot $i \circ q:U \ra C_n$ globally lifts to a plot $r:U \ra B_n$, since $g_n$ is a linear subduction. So by diagram chasing, the plot $d_n^B \circ r:U \ra B_{n-1}$ globally lifts to a plot $s:U \ra \ker(d_{n-1}^A)$.  This proves the smoothness of $\delta_n$.  
\end{proof}

\begin{de}\label{def:ext}
Given diffeological vector spaces $V$ and $W$, we define 
\[
\Ext^n(V,W):=H^n(L^\infty(\textbf{P}(V),W)),
\] 
where $\textbf{P}(V) \ra V$ is a diffeological projective resolution.
\end{de} 

Here are some basic properties for $\Ext^n(V,W)$:

\begin{rem}\label{rem:propertiesofExt}\
\begin{enumerate} 
\item $\Ext^n(V,W)$ does not depend on the choice of diffeological projective resolution of $V$. 

\item $\Ext^0(V,W)$ is always isomorphic to $L^\infty(V,W)$ in $\DVect$.

\item \label{proj} If $V$ is projective, then $\Ext^n(V,W)=0$ for all $n \geq 1$.

\item We can define \dftxt{injective} diffeological vector space as dual of projective diffeological vector space, i.e., a diffeological vector space $V$ is injective if and only if for every linear induction $f:W_1 \ra W_2$ and every smooth linear map $g:W_1 \ra V$, there exists a smooth linear map $h:W_2 \ra V$ such that $g=h \circ f$. It is direct to show that if $V$ is injective, then $\Ext^n(W,V)=0$ for all $n \geq 1$. In particular, $\Ext^n(W,\R_{ind})=0$ for all $n \geq 1$.

\item Let $\{V_i\}_{i \in I} \cup \{W\}$ be a set of diffeological vector spaces. Then we have natural isomorphisms between $\Ext^n(\oplus_{i \in I}V_i,W)$ and $\prod_{i \in I} \Ext^n(V_i,W)$ in $\DVect$ for all $n \in \N$.

To prove this, besides the usual diagram chasing in homological algebra, we also need the following facts in $\DVect$ (and $\Diff$):
\begin{itemize}
\item Any (co)product of linear subductions is again a linear subduction. For the case of coproduct, it can be proved by the description of the coproduct diffeology from the proof of Proposition~\ref{prop:coproduct}.

\item Let $X_j$ be a subset of a diffeological space $Y_j$ for each $j \in J$. Then the sub-diffeology on $\prod_{j \in J} X_j$ from $\prod_{j \in J} Y_j$ coincides with the product diffeology with each $X_j$ equipped with the sub-diffeology from $Y_j$.

\item Let $A_i$ be a linear subspace of $V_i$ for each $i \in I$. Then we have a natural isomorphism between $(\prod_{i \in I} V_i)/(\prod_{i \in I} A_i)$ and $\prod_{i \in I} (V_i/A_i)$ in $\DVect$.
\end{itemize}

\item Let $\{V\} \cup \{W_j\}_{j \in J}$ be a set of diffeological vector spaces. Then we have natural isomorphisms between $\Ext^n(V,\prod_{j \in J} W_j)$ and $\prod_{j \in J} \Ext^n(V,W_j)$ in $\DVect$ for all $n \in \N$.

\item (Dimension shift) Let 
\[
\xymatrix{V & P_0 \ar[l]_{d_0} & P_1 \ar[l]_{d_1} & \cdots \ar[l] & P_{n-1} \ar[l]_{d_{n-1}} & W \ar[l]_-{d_n}}
\]
be a finite diffeological chain complex such that it is exact at every middle spot, each $P_i$ is projective, $d_0$ is a linear subduction and $d_n$ is a linear induction. Then $\Ext^{m+n}(V,A)$ is isomorphic to $\Ext^m(W,A)$ in $\DVect$ for any $m \in \Z^+$ and any diffeological vector space $A$.
\end{enumerate}
\end{rem}

\begin{thm}\label{thm:les}
Let $\xymatrix{0 \ar[r] & W_1 \ar[r]^i & W_2 \ar[r]^\pi & W_3 \ar[r] & 0}$ be a short exact sequence in $\DVect$. Then for any diffeological vector space $V$, we have the following sequence in $\DVect$ which is exact in $\Vect$:
\[
\xymatrix{0 \ar[r] & L^\infty(V,W_1) \ar[r]^{i_*} & L^\infty(V,W_2) \ar[r]^{\pi_*} & L^\infty(V,W_3) \ar@{->} `r/8pt[d] `/10pt[l] `^dl[ll]|{\delta_0} `^r/3pt[dll] [dll] \\ & \Ext^1(V,W_1) \ar[r]^{i_*} & \Ext^1(V,W_2) \ar[r]^{\pi_*} &  \Ext^1(V,W_3) \ar@{->} `r/8pt[d] `/10pt[l] `^dl[ll]|{\delta_1} `^r/3pt[dll] [dll] \\ & \Ext^2(V,W_1) \ar[r]^{i_*} & \Ext^2(V,W_2) \ar[r]^{\pi_*} & \cdots}
\]
\end{thm}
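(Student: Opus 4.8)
The plan is to fix a diffeological projective resolution $\textbf{P}(V) \ra V$ and apply the contravariant functor $L^\infty(\blank,\blank)$ in its first slot. For each $n \geq 0$ the term $P_n$ is projective, so by Proposition~\ref{prop:perserveses} the functor $L^\infty(P_n,\blank) \colon \DVect \ra \DVect$ preserves short exact sequences. Applying this termwise to $\xymatrix{0 \ar[r] & W_1 \ar[r]^i & W_2 \ar[r]^\pi & W_3 \ar[r] & 0}$ yields, for every $n$, a short exact sequence in $\DVect$
\[
\xymatrix{0 \ar[r] & L^\infty(P_n,W_1) \ar[r]^{i_*} & L^\infty(P_n,W_2) \ar[r]^{\pi_*} & L^\infty(P_n,W_3) \ar[r] & 0.}
\]
Since the coboundary maps of each cochain complex $L^\infty(\textbf{P}(V),W_j)$ are the maps $(d_{n+1})^*$ induced by the differentials of $\textbf{P}(V)$, and these are natural in $W_j$ (precomposition commutes with postcomposition), the squares commute, so the above assemble into a short exact sequence of diffeological cochain complexes
\[
\xymatrix{0 \ar[r] & L^\infty(\textbf{P}(V),W_1) \ar[r]^{i_*} & L^\infty(\textbf{P}(V),W_2) \ar[r]^{\pi_*} & L^\infty(\textbf{P}(V),W_3) \ar[r] & 0}
\]
in $\DCh^o$.

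Next I would invoke the cochain analogue of Theorem~\ref{sestoles}: the same diagram chase, with plots globally lifted along linear subductions via Remark~\ref{rem:globallift}, produces for any short exact sequence of diffeological cochain complexes a long sequence of cohomology objects in $\DVect$ (in particular with smooth connecting maps $\delta^n$) which is exact in $\Vect$, and which begins with $0 \ra H^0$ because the degree‑$0$ terms already form a left‑exact sequence in $\DVect$. Applying this to the sequence above and using Definition~\ref{def:ext}, i.e.\ $H^n(L^\infty(\textbf{P}(V),W_j)) = \Ext^n(V,W_j)$, together with the identification $H^0(L^\infty(\textbf{P}(V),W_j)) \cong L^\infty(V,W_j)$ in $\DVect$ from Remark~\ref{rem:propertiesofExt}(2), gives precisely the asserted sequence, exact in $\Vect$. (Independence of the choice of resolution is Remark~\ref{rem:propertiesofExt}(1).)

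The main obstacle is the very first step: checking that $L^\infty(\textbf{P}(V),\blank)$ sends $0 \ra W_1 \ra W_2 \ra W_3 \ra 0$ to a short exact sequence \emph{in $\DVect$}, i.e.\ that $\pi_* \colon L^\infty(P_n,W_2) \ra L^\infty(P_n,W_3)$ is a linear subduction. Left exactness is routine --- $i$ is injective so $i_* = i \circ (\blank)$ is injective, and it is a linear induction since $i$ is (as remarked after Definition~\ref{def:ses}, composing with a linear induction gives a linear induction on function spaces) --- but the subduction claim is exactly where projectivity of the $P_n$ enters, and it is the content of Proposition~\ref{prop:perserveses}; without it the middle and right $\Ext$‑terms would not be controlled and the cochain version of Theorem~\ref{sestoles} could not be run. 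Once that point is in place, the remainder is the standard cohomological long‑exact‑sequence argument with the only extra bookkeeping being smoothness, which is handled uniformly by global lifting along linear subductions.
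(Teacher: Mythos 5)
Your proposal is correct and follows essentially the same route as the paper: take a diffeological projective resolution of $V$, use Proposition~\ref{prop:perserveses} to see that applying $L^\infty(P_n,\blank)$ termwise yields a short exact sequence of diffeological cochain complexes, and then invoke the cohomological version of Theorem~\ref{sestoles}. The extra details you supply (naturality of the coboundaries, the identification of $H^0$ with $L^\infty(V,W_j)$, and independence of the resolution) are all consistent with the paper's intended argument.
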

\begin{proof}
Take a diffeological projective resolution $\textbf{P}(V) \ra V$. Since 
\[
\xymatrix{0 \ar[r] & W_1 \ar[r]^i & W_2 \ar[r]^\pi & W_3 \ar[r] & 0}
\] 
is a short exact sequence in $\DVect$ and each $P_j$ is projective, by Proposition~\ref{prop:perserveses}, we get  diffeological cochain maps 
\[
i_*:L^\infty(\textbf{P}(V),W_1) \ra L^\infty(\textbf{P}(V),W_2)
\] 
and 
\[
\pi_*:L^\infty(\textbf{P}(V),W_2) \ra L^\infty(\textbf{P}(V),W_3)
\] 
between diffeological cochain complexes such that for each $n$ 
\[
\xymatrix{0 \ar[r] & L^\infty(P_n,W_1) \ar[r]^{i_*} & L^\infty(P_n,W_2) \ar[r]^{\pi_*} & L^\infty(P_n,W_3) \ar[r] & 0}
\] 
is a short exact sequence in $\DVect$. The result then follows from the cohomological version of Theorem~\ref{sestoles}.  
\end{proof}

Dually, we have

\begin{thm}\label{thm:les2}
Let $\xymatrix{0 \ar[r] & V_1 \ar[r]^i & V_2 \ar[r]^\pi & V_3 \ar[r] & 0}$ be a short exact sequence in $\DVect$. Then for any diffeological vector space $W$, we have the following sequence in $\DVect$ which is exact in $\Vect$:
\[
\xymatrix{0 \ar[r] & L^\infty(V_3,W) \ar[r]^{\pi^*} & L^\infty(V_2,W) \ar[r]^{i^*} & L^\infty(V_1,W) \ar@{->} `r/8pt[d] `/10pt[l] `^dl[ll]|{\delta^0} `^r/3pt[dll] [dll] \\ & \Ext^1(V_3,W) \ar[r]^{\pi^*} & \Ext^1(V_2,W) \ar[r]^{i^*} &  \Ext^1(V_1,W) \ar@{->} `r/8pt[d] `/10pt[l] `^dl[ll]|{\delta^1} `^r/3pt[dll] [dll] \\ & \Ext^2(V_3,W) \ar[r]^{\pi^*} & \Ext^2(V_2,W) \ar[r]^{i^*} & \cdots}
\]
\end{thm}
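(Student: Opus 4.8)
The plan is to mirror the proof of Theorem~\ref{thm:les}, with one essential change: since $L^\infty(\blank,W)$ is contravariant, we cannot feed it a single projective resolution and invoke Proposition~\ref{prop:perserveses}; instead we must resolve the whole short exact sequence $0 \ra V_1 \ra V_2 \ra V_3 \ra 0$ simultaneously. First I would apply the Horseshoe Lemma~\ref{lem:Horseshoe}: fix diffeological projective resolutions $\textbf{P}(V_1) \ra V_1$ and $\textbf{P}(V_3) \ra V_3$; the lemma produces a diffeological projective resolution $\textbf{P}(V_2) \ra V_2$ fitting into a commutative diagram whose three rows are these resolutions and whose columns are short exact sequences in $\DVect$, and moreover in each degree $n$ the column is the canonical sequence $0 \ra P_n' \ra P_n'\oplus P_n'' \ra P_n'' \ra 0$, i.e. a finite product together with its inclusion and projection, which in particular splits smoothly.

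Next I would apply $L^\infty(\blank,W)$ degreewise. Because each column is the inclusion/projection of a product, Remark~\ref{rem:naturalisomorphisms} gives a natural isomorphism $L^\infty(P_n'\oplus P_n'',W)\cong L^\infty(P_n',W)\times L^\infty(P_n'',W)$ in $\DVect$, under which $\pi_n^*$ becomes the inclusion of the second factor and $i_n^*$ the projection onto the first. Hence for every $n$ the sequence
\[
0 \ra L^\infty(P_n'',W) \ra L^\infty(P_n'\oplus P_n'',W) \ra L^\infty(P_n',W) \ra 0
\]
is short exact in $\DVect$ (first map a linear induction, second a linear subduction), so that $0 \ra L^\infty(\textbf{P}(V_3),W) \ra L^\infty(\textbf{P}(V_2),W) \ra L^\infty(\textbf{P}(V_1),W) \ra 0$ is a short exact sequence of diffeological cochain complexes. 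This is exactly where the diffeological content sits: without the smooth splitting, applying the contravariant functor need not produce a linear subduction on the nose, and it is the Horseshoe construction that supplies it.

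Finally I would invoke the cohomological version of Theorem~\ref{sestoles}, applied to this short exact sequence of diffeological cochain complexes, to obtain a long sequence in $\DVect$ which is exact in $\Vect$ and which connects the cohomologies $H^n\big(L^\infty(\textbf{P}(V_i),W)\big)$. By Definition~\ref{def:ext} these are $\Ext^n(V_i,W)$ — here one uses Remark~\ref{rem:propertiesofExt}(1) so that the identification is independent of the chosen resolutions, and Remark~\ref{rem:propertiesofExt}(2) to rewrite the degree-$0$ terms as $L^\infty(V_i,W)$ — and since all complexes involved are concentrated in non-negative degrees, the long sequence begins $0 \ra L^\infty(V_3,W) \ra \cdots$, yielding precisely the asserted sequence. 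The only genuine obstacle is the degreewise exactness in $\DVect$ (as opposed to merely in $\Vect$) after applying $L^\infty(\blank,W)$; once that is secured by the smooth splitting of the Horseshoe resolution, the rest is the diagram chase already packaged in Theorem~\ref{sestoles} together with routine homological bookkeeping.
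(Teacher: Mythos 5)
Your proposal is correct and follows exactly the route the paper takes: its proof of Theorem~\ref{thm:les2} is precisely ``apply the Horseshoe Lemma and then the cohomological version of Theorem~\ref{sestoles},'' and you have correctly identified the key point that the degreewise smooth splitting $0 \ra P_n' \ra P_n'\oplus P_n'' \ra P_n'' \ra 0$ is what makes the sequence of cochain complexes short exact in $\DVect$ after applying the contravariant functor $L^\infty(\blank,W)$. Your write-up supplies the details the paper leaves implicit, but the argument is the same.
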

\begin{proof}
This follows directly from the Horseshoe lemma and Theorem~\ref{sestoles}.
\end{proof}

\begin{cor}\label{cor:Extandsplit}
Let $V$ and $W$ be diffeological vector spaces. If $\Ext^1(W,V)=0$, then every short exact sequence $0 \ra V \ra A \ra W \ra 0$ in $\DVect$ splits smoothly.
\end{cor}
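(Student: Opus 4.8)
The plan is to feed the given short exact sequence into the long exact sequence of Theorem~\ref{thm:les2} and read off surjectivity of the restriction map on hom-objects, then invoke the splitting criterion of Theorem~\ref{thm:splitses}. So let $\xymatrix{0 \ar[r] & V \ar[r]^i & A \ar[r]^\pi & W \ar[r] & 0}$ be a short exact sequence in $\DVect$. Applying Theorem~\ref{thm:les2} to this sequence, with second variable $V$ itself, produces a sequence in $\DVect$ which is exact in $\Vect$ and begins
\[
\xymatrix{0 \ar[r] & L^\infty(W,V) \ar[r]^{\pi^*} & L^\infty(A,V) \ar[r]^{i^*} & L^\infty(V,V) \ar[r]^-{\delta^0} & \Ext^1(W,V) \ar[r] & \cdots}
\]

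Next I would use the hypothesis $\Ext^1(W,V)=0$: it forces the connecting map $\delta^0$ to be the zero map, so that exactness of the sequence in $\Vect$ at the spot $L^\infty(V,V)$ gives $\im(i^*)=\ker(\delta^0)=L^\infty(V,V)$; i.e.\ $i^*$ is surjective. In particular $\mathrm{id}_V \in L^\infty(V,V)$ lies in the image of $i^*$, so there is a smooth linear map $r:A \ra V$ with $i^*(r) = r \circ i = \mathrm{id}_V$. Finally, this is precisely condition~(1) of Theorem~\ref{thm:splitses}, which is equivalent to condition~(3), so the short exact sequence $0 \ra V \ra A \ra W \ra 0$ splits smoothly.

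There is essentially no serious obstacle here, since all the work has been done in Theorems~\ref{thm:les2} and~\ref{thm:splitses}; the only point requiring a line of care is the bookkeeping of which variable is exact and confirming that exactness in $\Vect$ (rather than in $\DVect$) is all that is needed — and indeed it is, because we only need the set-level statement that $\mathrm{id}_V$ is hit by $i^*$, and the map $r$ we obtain is automatically a morphism of $\DVect$ by construction.
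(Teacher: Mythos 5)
Your argument is correct, and it is a mirror image of the paper's proof rather than a literal match. The paper applies the covariant long exact sequence (Theorem~\ref{thm:les}) to $0 \ra V \ra A \ra W \ra 0$ with fixed first variable $W$, obtaining $0 \ra L^\infty(W,V) \ra L^\infty(W,A) \xrightarrow{\pi_*} L^\infty(W,W) \ra \Ext^1(W,V)=0$, so $\mathrm{id}_W$ lifts to a smooth linear section $q:W \ra A$ of $\pi$, which is condition~(2) of Theorem~\ref{thm:splitses}. You instead apply the contravariant sequence (Theorem~\ref{thm:les2}) with fixed second variable $V$, deduce that $i^*:L^\infty(A,V) \ra L^\infty(V,V)$ is surjective, and produce a retraction $r$ of $i$, which is condition~(1). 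Both routes identify $\Ext^1(W,V)$ as the obstruction group and both conclude via the equivalences of Theorem~\ref{thm:splitses}, so the difference is essentially cosmetic here; the one substantive distinction is which prior machinery is invoked, since in this paper Theorem~\ref{thm:les} rests on Proposition~\ref{prop:perserveses} (projectives make $L^\infty(P,\blank)$ exact), whereas Theorem~\ref{thm:les2} rests on the Horseshoe Lemma. Your closing remark is also on point: only exactness in $\Vect$ at the spot $L^\infty(V,V)$ is needed, since one merely has to hit $\mathrm{id}_V$, and the resulting lift is already a morphism of $\DVect$.
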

\begin{proof}
By Theorem~\ref{thm:les}, we know that $L^\infty(W,A) \ra L^\infty(W,W)$ is surjective. The result then follows from Theorem~\ref{thm:splitses}.
\end{proof}

Hence, with the notations in Example~\ref{ex:nonssplit}, $\Ext^1(\prod_\omega \R,K) \neq 0$.

\begin{cor}\label{cor:ext1=>projective}
A diffeological vector space $V$ is projective if and only if 
\[
\Ext^1(V,W)=0
\] 
for every diffeological vector space $W$.
\end{cor}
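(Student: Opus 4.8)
The plan is to prove the two implications separately, each by reduction to results already established in the excerpt.

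For the forward direction, if $V$ is projective then Remark~\ref{rem:propertiesofExt}(\ref{proj}) immediately gives $\Ext^n(V,W)=0$ for all $n\geq 1$ and all diffeological vector spaces $W$; in particular $\Ext^1(V,W)=0$. Nothing further is needed.

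For the converse, suppose $\Ext^1(V,W)=0$ for every diffeological vector space $W$. I would use the characterization of projectivity from Remark~\ref{rem:projective}(\ref{projiffsplit}): it suffices to show that every linear subduction $p\colon W\to V$ splits smoothly. Given such a $p$, set $K=\ker(p)$ with the sub-diffeology, so that $0\to K\to W\xrightarrow{\,p\,} V\to 0$ is a short exact sequence in $\DVect$. Applying the hypothesis to this particular $K$ gives $\Ext^1(V,K)=0$, whereupon Corollary~\ref{cor:Extandsplit} (with the $V$ there taken to be $K$ and the $W$ there taken to be our $V$) shows that the sequence splits smoothly, i.e. $p$ admits a smooth linear section. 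Hence $V$ is projective. Alternatively, one may avoid quantifying over all subductions by invoking Theorem~\ref{thm:enoughprojectives}: choose a linear subduction $P(V)\to V$ with $P(V)$ projective, let $K$ be its kernel, use $\Ext^1(V,K)=0$ and Corollary~\ref{cor:Extandsplit} to split $0\to K\to P(V)\to V\to 0$, so that $V$ is a smooth direct summand of the projective $P(V)$, and then conclude by the retract-closure property in Remark~\ref{rem:projective}(3).

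I do not expect a genuine obstacle here: once Corollary~\ref{cor:Extandsplit} is available (itself built from the long exact sequence of Theorem~\ref{thm:les} and the splitting criterion of Theorem~\ref{thm:splitses}), the argument is a short, diagram-free deduction. The only point requiring care is bookkeeping of the order of the arguments of $\Ext^1$ and of the variables in Corollary~\ref{cor:Extandsplit}, together with noting that the hypothesis must be applied with $W=K$, which is legitimate precisely because we assumed vanishing of $\Ext^1(V,-)$ on \emph{all} diffeological vector spaces.
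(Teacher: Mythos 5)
Your proof is correct and matches the paper's argument exactly: the forward direction via Remark~\ref{rem:propertiesofExt}(\ref{proj}), and the converse by combining Corollary~\ref{cor:Extandsplit} with the splitting characterization of projectivity in Remark~\ref{rem:projective}(\ref{projiffsplit}). The variable bookkeeping in your application of Corollary~\ref{cor:Extandsplit} is also right.
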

\begin{proof}
($\Rightarrow$) This follows from Remark~\ref{rem:propertiesofExt}(\ref{proj}).

($\Leftarrow$) This follows from Corollary~\ref{cor:Extandsplit} and Remark~\ref{rem:projective}(\ref{projiffsplit}).
\end{proof}

As an easy corollary, we know that if $V \ra W$ is a linear subduction between projective diffeological vector spaces, then its kernel is also projective. In particular, if $f:M \ra N$ is a smooth fiber bundle between smooth manifolds, then the kernel of $F(f):F(M) \ra F(N)$ is a projective diffeological vector space. 

\bigskip

Finally we are going to prove the inverse of Corollary~\ref{cor:Extandsplit}. 

For $V,W$ diffeological vector spaces, we define two short exact sequences $0 \ra V \ra A \ra W \ra 0$ and $0 \ra V \ra A' \ra W \ra 0$ in $\DVect$ to be equivalent if there is a commutative diagram in $\DVect$ 
\[
\xymatrix{0 \ar[r] & V \ar[r] \ar[d]^{1_V} & A \ar[r] \ar[d]^f & W \ar[r] \ar[d]^{1_W} & 0 \\ 0 \ar[r] & V \ar[r] & A' \ar[r] & W \ar[r] & 0}
\]
with $f$ an isomorphism in $\DVect$. (We will see in the next lemma that indeed only the existence of $f$ is needed.) This defines an equivalence relation on the set of all short exact sequences in $\DVect$ of the form $0 \ra V \ra A \ra W \ra 0$, and we write the quotient set as $e(W,V)$.

\begin{lem}\label{lem:sfl}
Given a commutative diagram in $\DVect$
\[
\xymatrix{0 \ar[r] & V \ar[r]^i \ar[d]^g & A \ar[r]^\pi \ar[d]^f & W \ar[r] \ar[d]^h & 0 \\ 0 \ar[r] & V' \ar[r]_{i'} & A' \ar[r]_{\pi'} & W' \ar[r] & 0}
\]
with both rows short exact sequences in $\DVect$, and both $g$ and $h$ isomorphisms in $\DVect$. Then $f$ is also an isomorphism in $\DVect$.
\end{lem}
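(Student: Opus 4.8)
The plan is to invoke the classical Short Five Lemma in $\Vect$ (see~\cite[The Short Five Lemma~1.17]{H}) to conclude that the underlying linear map of $f$ is an isomorphism of vector spaces; it then remains only to show that the inverse set map $f^{-1}:A'\ra A$ is smooth, since together with the smoothness of $f$ this is exactly the assertion that $f$ is an isomorphism in $\DVect$. Thus everything reduces to showing that for an arbitrary plot $p:U\ra A'$, the composite $f^{-1}\circ p$ is a plot of $A$.

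Fix such a $p$ and a point $u_0\in U$. First I would push $p$ down to $W$: the map $h^{-1}\circ\pi'\circ p:U\ra W$ is smooth because $h$ is an isomorphism in $\DVect$, and since $\pi:A\ra W$ is a linear subduction there are an open neighbourhood $U_0$ of $u_0$ and a plot $q:U_0\ra A$ with $\pi\circ q=(h^{-1}\circ\pi'\circ p)|_{U_0}$. A diagram chase in the given square then gives $\pi'\circ(f\circ q)=h\circ\pi\circ q=\pi'\circ p$ on $U_0$, so $f\circ q-p$ takes values in $\ker(\pi')=\im(i')$. Since $i'$ is a linear induction, it identifies $V'$ with the sub-diffeological space $\im(i')$ of $A'$, so the smooth map $f\circ q-p:U_0\ra A'$ corestricts to a smooth map $U_0\ra\im(i')$, and composing with the inverse of $i'$ yields a smooth map $v:U_0\ra V'$ with $i'\circ v=f\circ q-p$.

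Next set $w:=g^{-1}\circ v:U_0\ra V$, which is smooth since $g$ is an isomorphism in $\DVect$, and consider $q-i\circ w:U_0\ra A$, which is smooth because $A$ is a diffeological vector space. I claim $q-i\circ w=f^{-1}\circ p$ on $U_0$: applying $f$ and using $f\circ i=i'\circ g$ gives
\[
f\circ(q-i\circ w)=f\circ q-i'\circ g\circ w=f\circ q-i'\circ v=f\circ q-(f\circ q-p)=p .
\]
Hence $f^{-1}\circ p$ agrees on $U_0$ with the plot $q-i\circ w$ of $A$. As $u_0\in U$ was arbitrary and $f^{-1}\circ p$ is a single well-defined set map that is locally a plot of $A$, the sheaf condition for the diffeology of $A$ shows that $f^{-1}\circ p$ is a plot of $A$. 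Therefore $f^{-1}$ is smooth, and $f$ is an isomorphism in $\DVect$.

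The only genuine obstacle is that $\pi$ is merely a subduction, so the lift $q$ of $h^{-1}\circ\pi'\circ p$ exists only locally; this is dealt with by performing the construction near each point of $U$ and reassembling by the sheaf axiom, which is legitimate because all the local expressions $q-i\circ w$ represent the one set map $f^{-1}\circ p$. The remaining ingredients — that a smooth map whose image lies in the image of a linear induction factors smoothly through the source, and that $g$ and $h$ being isomorphisms in $\DVect$ let us transport smoothness along $g^{-1}$ and $h^{-1}$ — are routine.
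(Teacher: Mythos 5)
Your proof is correct and follows essentially the same route as the paper's: reduce to the Short Five Lemma in $\Vect$ to get that $f$ is a linear isomorphism, then exhibit $f^{-1}\circ p$ as $q-i\circ g^{-1}\circ v$ for a lift $q$ of $h^{-1}\circ\pi'\circ p$ along $\pi$ and a correction $v$ obtained from the induction $i'$. The only (harmless) difference is that you work with local lifts and patch via the sheaf axiom, whereas the paper takes the lift $q$ globally on $U$, using the fact (Remark~\ref{rem:globallift}) that plots lift globally along linear subductions.
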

\begin{proof}
From~\cite[The Short Five Lemma~1.17]{H}, we know that $f$ is an isomorphism in $\Vect$. We are left to show that $f^{-1}$ is smooth. For any plot $p:U \ra A'$, since $h^{-1} \circ \pi' \circ p:U \ra W$ is smooth and $\pi$ is a linear subduction, we have a smooth linear map $q:U \ra A$ such that $\pi \circ q=h^{-1} \circ \pi' \circ p$. Then $\pi \circ (f^{-1} \circ p-q)=0$ implies that $\pi' \circ (p-f \circ q)=0$. Since the bottom row in the above diagram is short exact in $\DVect$, there exists a smooth linear map $r:U \ra V'$ such that $f \circ q - p=i' \circ r$. So $f^{-1} \circ p=q+f^{-1} \circ i' \circ r=q+i \circ g^{-1} \circ r$ is smooth, which implies that $f^{-1}$ is smooth.
\end{proof}

Slightly more generally, we can prove the following in a similar way:

\begin{prop}\label{prop:fl}
Let
\[
\xymatrix{V_1 \ar[r] \ar[d]^{f_1} & V_2 \ar[r] \ar[d]^{f_2} & V_3 \ar[r]^{d_3} \ar[d]^{f_3} & V_4 \ar[r] \ar[d]^{f_4} & V_5 \ar[d]^{f_5} \\ W_1 \ar[r] & W_2 \ar[r]_{\partial_2} & W_3 \ar[r] & W_4 \ar[r] & W_5}
\]
be a commutative diagram in $\DVect$, such that both rows are exact in $\Vect$, $f_1$ is surjective, $f_5$ is injective, and both $f_2$ and $f_4$ are isomorphisms in $\DVect$. If the induced maps $V_3 \ra \im(d_3)$ is a subduction and $W_2/\ker(\partial_2) \ra W_3$ is an induction, then $f_3$ is an isomorphism in $\DVect$.
\end{prop}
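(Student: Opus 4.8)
The plan is to mimic the proof of Lemma~\ref{lem:sfl}, chasing the diagram across the two additional columns while carefully keeping track of which diffeology each intermediate plot carries. First I would apply the ordinary five lemma in $\Vect$: since both rows are exact in $\Vect$, $f_1$ is surjective, $f_5$ is injective, and $f_2,f_4$ are in particular linear isomorphisms, $f_3$ is a linear isomorphism. As $f_3$ is smooth by hypothesis, it remains only to show that $f_3^{-1}$ is smooth; and since diffeologies form a sheaf, it suffices to check, for an arbitrary plot $p\colon U\to W_3$, that $f_3^{-1}\circ p$ is a plot of $V_3$ on a neighbourhood of each point of $U$.

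Fix such a $p$ and a point of $U$, and write the top row as $V_1\xrightarrow{a_1}V_2\xrightarrow{a_2}V_3\xrightarrow{d_3}V_4\xrightarrow{a_4}V_5$ and the bottom row as $W_1\xrightarrow{b_1}W_2\xrightarrow{\partial_2}W_3\xrightarrow{b_3}W_4\xrightarrow{b_4}W_5$. The first lifting step: by commutativity $d_3\circ f_3^{-1}\circ p=f_4^{-1}\circ b_3\circ p$, which is a plot of $V_4$ with image in $\im(d_3)$, hence a plot of $\im(d_3)$ with its sub-diffeology; since $V_3\to\im(d_3)$ is a subduction, after shrinking $U$ we obtain a plot $q\colon U\to V_3$ with $d_3\circ q=d_3\circ f_3^{-1}\circ p$. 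A direct computation from $b_3\circ f_3=f_4\circ d_3$ shows $f_3\circ q-p$ takes values in $\ker(b_3)=\im(\partial_2)$ (exactness of the bottom row), so it is a plot of $\im(\partial_2)$ with its sub-diffeology.

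The second lifting step uses that $W_2/\ker(\partial_2)\to W_3$ is an induction, which identifies $\im(\partial_2)$ (sub-diffeology from $W_3$) with $W_2/\ker(\partial_2)$ (quotient diffeology); since the quotient map $W_2\to W_2/\ker(\partial_2)$ is a linear subduction, after shrinking $U$ again we get a plot $s\colon U\to W_2$ with $\partial_2\circ s=f_3\circ q-p$. Then $a_2\circ f_2^{-1}\circ s$ is a plot of $V_3$, and $f_3\circ a_2=\partial_2\circ f_2$ gives $a_2(f_2^{-1}(s(u)))=q(u)-f_3^{-1}(p(u))$ for all $u$; that is, $f_3^{-1}\circ p=q-a_2\circ f_2^{-1}\circ s$ is a difference of plots of $V_3$, hence a plot. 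Gluing over $U$ by the sheaf condition shows $f_3^{-1}$ is smooth, so $f_3$ is an isomorphism in $\DVect$.

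The routine part will be the two diagram-chase identities (that $f_3\circ q-p$ lands in $\im(\partial_2)$, and that $a_2\circ f_2^{-1}\circ s=q-f_3^{-1}\circ p$), which are pure linear algebra carried out pointwise. The step requiring care — and the reason for the two extra hypotheses — is the two lifting steps: a plot of $W_3$ landing in a kernel or an image is automatically a plot of that subspace only for the \emph{sub}-diffeology, and to lift it through $d_3$ or $\partial_2$ one needs exactly that $V_3\to\im(d_3)$ is a subduction and that $\im(\partial_2)\subseteq W_3$ carries the quotient diffeology of $W_2/\ker(\partial_2)$ rather than a merely coarser one. Without these hypotheses the chase produces only set maps and $f_3^{-1}$ need not be smooth, just as not every smooth linear bijection in $\DVect$ is an isomorphism.
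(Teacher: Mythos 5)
Your proof is correct and is exactly the diagram chase the paper has in mind: it extends the lifting argument of Lemma~\ref{lem:sfl} by one extra step on each side, using the subduction hypothesis on $V_3\to\im(d_3)$ for the first lift and the induction hypothesis on $W_2/\ker(\partial_2)\to W_3$ (composed with the quotient subduction $W_2\to W_2/\ker(\partial_2)$) for the second, then glues the local lifts by the sheaf axiom. The paper only remarks that the proposition "can be proved in a similar way" to Lemma~\ref{lem:sfl}, and your write-up supplies precisely that argument, including the correct identification of where the two extra hypotheses are needed.
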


\begin{thm}\label{thm:bijection}
There is a bijection between $e(W,V)$ and $\Ext^1(W,V)$ which sends the class of smoothly split short exact sequences in $e(W,V)$ to $0$ in $\Ext^1(W,V)$.
\end{thm}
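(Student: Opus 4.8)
The plan is to present $\Ext^1(W,V)$ as a quotient of a hom-space and then biject that quotient with the set $e(W,V)$ of extension classes. First I would fix a diffeological projective resolution $\cdots \to P_1 \xrightarrow{d_1} P_0 \xrightarrow{d_0} W \to 0$, let $K = \ker(d_0)$ carry the sub-diffeology of $P_0$, and let $j\colon K \hookrightarrow P_0$ be the inclusion, so that $0 \to K \xrightarrow{j} P_0 \xrightarrow{d_0} W \to 0$ is short exact in $\DVect$. Applying Theorem~\ref{thm:les2} to this sequence with $V$ in the fixed slot, together with $\Ext^1(P_0,V)=0$ (Remark~\ref{rem:propertiesofExt}(\ref{proj}), as $P_0$ is projective), yields an exact sequence $L^\infty(P_0,V) \xrightarrow{j^*} L^\infty(K,V) \xrightarrow{\delta^0} \Ext^1(W,V) \to 0$ in $\Vect$; hence $\delta^0$ identifies $\Ext^1(W,V)$, as a set, with $L^\infty(K,V)/j^*\bigl(L^\infty(P_0,V)\bigr)$. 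I would then define $\Theta\colon e(W,V) \to \Ext^1(W,V)$ as follows: given a short exact sequence $0 \to V \xrightarrow{\iota} A \xrightarrow{\rho} W \to 0$ in $\DVect$, projectivity of $P_0$ gives a smooth linear $\psi\colon P_0 \to A$ with $\rho\psi = d_0$; since $\rho\psi j = 0$, the map $\psi j$ lands in $\iota(V)$, and because $\iota$ is a linear induction there is a unique smooth linear $\theta_A\colon K \to V$ with $\iota\theta_A = \psi j$; set $\Theta([A]) = \delta^0(\theta_A)$. This is well defined: a second lift $\psi'$ differs from $\psi$ by $\iota\sigma$ for some $\sigma \in L^\infty(P_0,V)$, changing $\theta_A$ by $\sigma j \in \im(j^*)$, and an equivalence of extensions $f\colon A\to A'$ transports an admissible lift for $A$ to one for $A'$ inducing the same $\theta$.

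For the inverse, given $\xi \in \Ext^1(W,V)$ I would pick $\theta \in L^\infty(K,V)$ with $\delta^0(\theta) = \xi$ and form the pushout of $V \xleftarrow{\theta} K \xrightarrow{j} P_0$ in the cocomplete category $\DVect$: concretely $A_\theta = (V \times P_0)/N_\theta$ with $N_\theta = \{(\theta(k),-j(k)) : k \in K\}$, where $V\times P_0$ carries the product (equivalently coproduct) diffeology and $A_\theta$ the quotient diffeology, with $\alpha\colon v \mapsto [(v,0)]$ and $\rho_\theta\colon [(v,p)] \mapsto d_0(p)$. The central claim is that $0 \to V \xrightarrow{\alpha} A_\theta \xrightarrow{\rho_\theta} W \to 0$ is short exact in $\DVect$: exactness in $\Vect$ is the standard pushout computation (using injectivity of $j$); $\rho_\theta$ is a linear subduction because $d_0$ is and plots lift globally along linear subductions (Remark~\ref{rem:globallift}), so a plot of $W$ lifts through $d_0$ and hence through $\rho_\theta$; and $\alpha$ is a linear induction by a local lifting argument — if $q\colon U\to V$ is a set map with $\alpha q$ a plot, then locally $\alpha q$ lifts to a plot $(q_1,q_2)$ of $V\times P_0$, comparison with $(q,0)$ forces $q_1-q=\theta k$ and $q_2=-j k$ for a map $k$ into $K$, and $k$ is a plot because $j$ is an induction, so $q = q_1-\theta k$ is a plot of $V$. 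Then $\Psi(\xi) := [A_\theta]$ is well defined: if $\theta' = \theta-\sigma j$, the shear $(v,p)\mapsto(v+\sigma(p),p)$ of $V\times P_0$ descends to an equivalence $A_\theta \xrightarrow{\sim} A_{\theta'}$ over $V$ and $W$.

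To conclude I would check that $\Theta$ and $\Psi$ are mutually inverse. For $\Theta\Psi=\mathrm{id}$: the pushout leg $\psi\colon p \mapsto [(0,p)]$ is an admissible lift for $A_\theta$ with $\psi j = \alpha\theta$, so $\theta_{A_\theta}=\theta$ and $\Theta([A_\theta])=\delta^0(\theta)=\xi$. For $\Psi\Theta=\mathrm{id}$: given $A$ with lift $\psi$ and induced $\theta_A$, the smooth linear map $V\times P_0\to A$, $(v,p)\mapsto \iota(v)+\psi(p)$, annihilates $N_{\theta_A}$ and descends to a morphism of extensions $A_{\theta_A}\to A$ restricting to $1_V$ and $1_W$, which is an isomorphism in $\DVect$ by the Short Five Lemma (Lemma~\ref{lem:sfl}); hence $[A_{\theta_A}]=[A]$. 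Finally, a smoothly split sequence has a smooth linear section $s$ of $\rho$, giving the admissible lift $\psi = s d_0$ with $\psi j = s d_0 j = 0$, so $\theta_A=0$ and $\Theta([A])=\delta^0(0)=0$; conversely $\Theta([A])=0$ forces $[A]=\Psi(0)=[A_0]$, and $A_0=(V\times P_0)/(0\times K)\cong V\times W$ via the smooth bijection $[(v,p)]\mapsto(v,d_0(p))$, which is a subduction (again by global lifting along $d_0$) hence a diffeomorphism, so $[A]$ is the split class.

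The step I expect to be the main obstacle is verifying that the pushout really yields a short exact sequence in $\DVect$, i.e.\ that $\alpha\colon V \to A_\theta$ is a linear induction: this is the one place where the quotient diffeology on $A_\theta$, the product diffeology on $V\times P_0$, and the fact that $K\hookrightarrow P_0$ is an induction all have to be combined. The remaining points — extracting the presentation $\Ext^1(W,V)\cong L^\infty(K,V)/\im(j^*)$ from Theorem~\ref{thm:les2}, and the various well-definedness checks (independence of the lift $\psi$, of the extension representative, and of the chosen $\theta$) — are routine diagram chases, but need to be carried out carefully since the bijection is of sets and every intermediate map must be exhibited explicitly.
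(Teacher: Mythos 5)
Your proposal is correct and follows essentially the same route as the paper: both realize the inverse map by the pushout of $V \leftarrow \ker(d_0) \to P_0$ (the paper phrases this with $P_1$ in place of $K=\ker(d_0)$, which is equivalent since the cocycle factors through the subduction $P_1 \to \ker(d_0)$), verify that the resulting sequence is short exact in $\DVect$ by checking the subduction and induction conditions, and conclude with the Short Five Lemma. Your presentation of $\Ext^1(W,V)$ as $L^\infty(K,V)/\im(j^*)$ via the connecting map of Theorem~\ref{thm:les2} is a mild repackaging of the paper's cochain-level description borrowed from Rotman, and your explicit check that $\alpha$ is a linear induction spells out a step the paper only sketches.
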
  
\begin{proof}
As explained in~\cite[pp.~422-423]{R}, the map $\psi:e(W,V) \ra \Ext^1(W,V)$ is defined as follows: For any short exact sequence $0 \ra V \ra A \ra W \ra 0$ in $\DVect$ and any diffeological projective resolution $\textbf{P}(W) \ra W$, we have a commutative diagram (not necessarily unique) in $\DVect$
\[
\xymatrix{\cdots \ar[r] & P_2 \ar[r] \ar[d] & P_1 \ar[r] \ar[d]^f & P_0 \ar[r] \ar[d] & W \ar[r] \ar[d]^{1_W} & 0 \\ & 0 \ar[r] & V \ar[r] & A \ar[r] & W \ar[r] & 0.}
\]
Then $\psi$ takes the class of $0 \ra V \ra A \ra W \ra 0$ in $e(W,V)$ to $[f] \in \Ext^1(W,V)$. As shown in~\cite{R} that this map depends neither on representing short exact sequence in $e(W,V)$ nor on diffeological projective resolution of $W$ nor on the choice of $f$. The second statement of the theorem can then be proved as~\cite[Lemma~7.27]{R}.

The inverse map $\theta:\Ext^1(W,V) \ra e(W,V)$ can be constructed as follows: For any diffeological projective resolution $\textbf{P}(W) \ra W$ and any $f \in L^\infty(P_1,V)$ with $d_2^*(f)=0$, let $A$ be the pushout of $\xymatrix{V & P_1 \ar[r]^{d_1} \ar[l]_f & P_0}$ in $\DVect$. As proved in~\cite[Lemma~7.28]{R} that we have a short exact sequence 
\[
\xymatrix{0 \ar[r] & V \ar[r]^i & A \ar[r]^\pi & W \ar[r] & 0}
\] 
in $\Vect$. It is easy to check that $\pi$ is a subduction since $P_0 \ra W$ is, and $i$ is an induction since $A$ is a pushout, $\ker(d_0) \ra P_0$ is an induction, and $P_1 \ra \ker(d_0)$ is a subduction. As shown in~\cite[Theorem~7.30]{R} that $\theta$ is independent of the choice of representative of $[f] \in \Ext^1(W,V)$, and $\theta$ is the inverse of $\psi$. 
\end{proof}

As an immediate corollary to Theorem~\ref{thm:bijection} and Corollary~\ref{cor:Extandsplit}, we know that for any fixed diffeological vector spaces $V$ and $W$, $\Ext^1(W,V)=0$ if and only if every short exact sequence $0 \ra V \ra A \ra W \ra 0$ in $\DVect$ splits smoothly.

\end{document}